\newcommand{\C}{{\mathbb C}}
\newcommand{\Z}{{\mathbb Z}}
\newcommand{\Y}{{\mathbb Y}}
\newcommand{\B}{{\mathcal B}}
\newcommand{\F}{{\mathcal F}}
\newcommand{\N}{{\mathcal N}}
\newcommand{\bla}{\boldsymbol{\lambda}}
\newcommand{\bmu}{\boldsymbol{\mu}}
\newcommand{\bnu}{\boldsymbol{\nu}}
\numberwithin{equation}{section}
\newtheorem{Theorem}{Theorem}[section]
\newtheorem{Corollary}[Theorem]{Corollary}
\newtheorem{Lemma}[Theorem]{Lemma}
\newtheorem{Proposition}[Theorem]{Proposition}
{ \theoremstyle{definition}
\newtheorem{Definition}[Theorem]{Definition}
\newtheorem{Remark}[Theorem]{Remark} }
\begin{document}

\newcommand{\arXivNumber}{2004.13916}

\renewcommand{\PaperNumber}{050}

\FirstPageHeading

\ShortArticleName{On $q$-Isomonodromic Deformations and $q$-Nekrasov Functions}

\ArticleName{On $\boldsymbol{q}$-Isomonodromic Deformations\\ and $\boldsymbol{q}$-Nekrasov Functions}

\Author{Hajime NAGOYA}

\AuthorNameForHeading{H.~Nagoya}

\Address{School of Mathematics and Physics, Kanazawa University, Kanazawa, Ishikawa 920-1192, Japan}
\Email{\href{mailto:nagoya@se.kanazawa-u.ac.jp}{nagoya@se.kanazawa-u.ac.jp}}

\ArticleDates{Received June 02, 2020, in final form May 04, 2021; Published online May 13, 2021}

\Abstract{We~construct a fundamental system of a $q$-difference Lax pair of rank $N$ in~terms of 5d Nekrasov functions with $q=t$. Our fundamental system degenerates by the limit $q\to 1$ to a fundamental system of a differential Lax pair, which yields the Fuji--Suzuki--Tsuda system. We~introduce tau functions of our system as Fourier transforms of~5d Nekrasov functions. Using asymptotic expansions of the fundamental system at~$0$ and~$\infty$, we obtain several determinantal identities of the tau functions.}

\Keywords{isomonodromic deformations; Nekrasov functions; Painlev\'e equations; determinantal identities}

\Classification{39A13; 33E17; 05A30}\vspace{-1ex}

\section{Introduction}

Isomonodromic deformations of linear differential equations~\cite{Jimbo-Miwa-Ueno} have been studied intensively and admit many applications both in mathematics and physics. The recent paper~\cite{GIL} revealed a~connection between isomonodromic deformation and the Liouville conformal field theory that the tau function of the sixth Painlev\'e equation $\mathrm{P_{VI}}$ is a Fourier transform of the Virasoro conformal block with central charge $c=1$. Later, the tau functions of other Painlev\'e equations $\mathrm{P_V}$, $\mathrm{P_{IV}}$, $\mathrm{P_{III}}$, $\mathrm{P_{II}}$ were also recognized as Fourier transforms of irregular conformal blocks~\cite{GIL1,Lisovyy-Nagoya-Roussillon, Nagoya2015, Nagoya2018}. Furthermore, the tau functions of isomonodromic deformation of particular Fuchsian systems of rank $N$ was constructed by semi-degenerate conformal blocks of~$W_N$-algebra with central charge $c=N-1$~\cite{GIL2}.

By AGT correspondence~\cite{AGT}, Virasoro conformal blocks correspond to Nekrasov partition functions of 4d supersymmetric gauge theories. Hence, the tau functions of the Painlev\'e equations have explicit series expansions at regular singular points. Correspondence between partition functions of gauge theories and tau functions of Painlev\'e equations was investigated in~\cite{BLMST}. In~\cite{BS2}, using Nakajima--Yoshioka blow-up equations~\cite{Nakajima-Yoshioka} for the pure Nekrasov partition functions, it was proved that the tau function of the Painlev\'e $\mathrm{III}_3$ ($\mathrm{P_{III_3}}$) in terms of the pure Nekrasov partition function satisfies the bilinear equation for it.

One of the reasons why conformal blocks appear in the construction of isomonodromic tau functions is that when the central charge takes particular values, monodromy invariant finite-systems can be constructed from conformal blocks with the degenerate field~\cite{GIL2,Iorgov-Lisovyy-Teschner}. We~note that by definition, local monodromies of conformal blocks are independent of their singular points, and connection problem of semi-degenerate $W_N$-conformal blocks with the degenerate field is reduced to connection problem of the Clausen--Thomae hypergeometric function ${}_{N}F_{N-1}$. Recall that $W_2$ is the Virasoro algebra.

A $q$-analog of isomonodromic deformation of linear differential equations can be considered. It corresponds to study connection preserved deformation of linear $q$-difference equations, that is, $q$-difference Lax pairs with rational coefficients of the independent variable. In~\cite{JS}, the $q$-Painlev\'e VI ($q$-$\mathrm{P_{VI}}$) was obtained from the compatibility condition of a Lax pair of rank~2. In~\cite{Jimbo-Nagoya-Sakai}, fundamental solutions of the Lax pair were constructed in terms of the $q$-conformal blocks given as expectation values of the intertwining operators of the Ding--Iohara--Miki alge\-bra~\cite{AFS}, and expressed in terms of the $q$-Nekrasov functions. As a result, a general solution to the $q$-Pain\-lev\'e~VI is expressed explicitly, and moreover, it was conjectured that tau functions of~\mbox{$q$-$\mathrm{P_{VI}}$} defined as Fourier transforms of the $q$-conformal blocks satisfy bilinear equations. Later, by taking degeneration of~$q$-$\mathrm{P_{VI}}$, explicit series representations for the tau functions of the $q$-Painlev\'e~V, $\mathrm{III_1}$, $\mathrm{III_2}$, and $\mathrm{III_3}$ equations were obtained~\cite{Matsuhira-Nagoya}. The series representations of the tau function for $q$-$\mathrm{P_{{III}_3}}$ expressed by the pure 5d Nekrasov function had been proposed in~\cite{BS1} for surface type~$A_7^{(1)\prime}$ and~\cite{BGM} for surface type $A_7^{(1)}$. The tau functions obtained in~\cite{Matsuhira-Nagoya} are equivalent to those proposed in~\cite{BGM,BS1}. A Fredholm determinant representation for the tau functions of~\mbox{$q$-$\mathrm{P_{III_3}}$} was presented in~\cite{BGT}.

In~this paper, we construct a fundamental system of a Lax pair of rank $N$ in terms of 5d Nekrasov functions with $q=t$. Our fundamental system degenerates by the limit $q\to 1$ to
 a~fundamental system of a Lax pair $L_{M,N}$ ($M\in\mathbb{Z}_{\geq 1}$) describing isomonodromy deformation of~a~Fuchsian system
with $M+3$ regular singular points
\begin{gather*}
	\frac{{\rm d}Y}{{\rm d}z}=\sum_{i=0}^{M+1}\frac{A_i}{z-x_i}Y,
\end{gather*}
where $Y=Y(z)$, $A_i$ are $N$ by $N$ matrices, and
whose {\it spectral type} is given by the ($M+3$)-tuple
\begin{gather*}
\left(1^N\right),\quad \left(1^N\right),\quad \left(N-1,1\right),\quad\dots,\quad\left(N-1,1\right).
\end{gather*}
Here, the spectral type expresses multiplicities of the eigenvalues of the coefficient matrices $A_i$.
The corresponding isomonodromic system was studied from the point of view of UC hierarchy~\cite{Tsuda2010}. When $M=1$, the isomonodromic system is also derived from similarity condition of~the Drinfeld--Sokolov hierarchy~\cite{Fuji-Suzuki2009} ($N=3$),~\cite{Suzuki2010} ($N\geq 2$). We~call this isomonodromic system the Fuji--Suzuki--Tsuda system. Since a fundamental system of~$L_{M,N}$ was constructed by~$W_N$ conformal blocks~\cite{GIL2}, the uniqueness of a fundamental system of a Fuchsian system gives another justification of AGT correspondence. We~note that to be rigorous, we have to~prove convergence of~$W_N$ conformal blocks and 5d Nekrasov partition functions, and moreover, Fourier transforms of them.

A key ingredient to construct a monodromy invariant fundamental system from conformal blocks is the braiding relation interchanging places of the variable $x$ of the degenerate field with other singular points. In~differential cases, the braiding relation is derived from $W_N$ conformal field theory. In~$q$-difference cases; however, we have not yet understood braiding relations of~$q$-conformal blocks representation theoretically. Instead, taking advantage of explicit expressions of~$q$-Nekrasov functions, we directly prove contiguity relations of~$q$-Nekrasov functions with special values by generalizing a method used in~\cite{Jimbo-Nagoya-Sakai}. Consequently, we solve the connection problem of~$q$-conformal blocks with the degenerate field.

The remainder of this paper is organized as follows. In~Section~\ref{S2}, we introduce $q$-conformal blocks and investigate the connection problem of degenerate $q$-conformal blocks. In~Section~\ref{S3}, we~construct a fundamental system of a particular Lax pair of rank $N$, using degenerate $q$-con\-formal blocks. We~introduce tau functions of our system as Fourier transforms of~$q$-conformal blocks. Finally, we present determinantal identities of the tau functions.

\medskip

\noindent
{\bf Notation.} Throughout the paper we fix $q\in\C^\times$ such that $|q|<1$.
We~set
\begin{gather*}
[u]=\frac{1-q^u}{1-q},\qquad
(a;q)_n=\prod_{j=0}^{n-1}\big(1-aq^j\big),
\\[-1ex]
(a_1,\dots,a_k;q)_{\infty}=\prod_{j=1}^k(a_j;q)_\infty,\qquad
(a;q,q)_\infty=\prod_{j,k=0}^\infty\big(1-aq^{j+k}\big).
\end{gather*}
We~use the $q$-gamma function, $q$-Barnes function and the $q$-theta function defined by
\begin{gather*}
\Gamma_q(u)=\frac{(q;q)_\infty}{(q^u;q)_\infty}(1-q)^{1-u},\qquad
G_q(u)=\frac{(q^u;q,q)_\infty}{(q;q,q)_\infty}(q;q)_\infty^{u-1}(1-q)^{-(u-1)(u-2)/2},
\\
\vartheta(u)=q^{u(u-1)/2}\Theta_q(q^u),\qquad
\Theta_q(x)=(x,q/x,q;q)_\infty,
\end{gather*}
which satisfy $\Gamma_q(1)=G_q(1)=1$ and
\begin{gather*}
\Gamma_q(u+1)=[u]\Gamma_q(u), \qquad
G_q(u+1)=\Gamma_q(u)G_q(u), \qquad \vartheta(u+1)=-\vartheta(u)=\vartheta(-u).
\end{gather*}

A partition is a finite sequence of positive integers
$\lambda=(\lambda_1,\dots,\lambda_l)$ such that
$\lambda_1\ge\cdots\ge$ $\lambda_{l}>0$.
Denote the length of the partition by $\ell(\lambda)=l$.
The conjugate partition $\lambda'=(\lambda'_1,\dots,\lambda'_{l'})$
is defined by $\lambda'_j=\sharp\{i\mid \lambda_i\ge j\}$,
$l'=\lambda_1$. We~regard a partition as a Young diagram.
Namely, we regard a partition $\lambda$ also as the subset
$\big\{(i,j)\in\Z^2\mid 1\le j\le \lambda_i,\ i\ge 1\big\}$ of~$\Z^2$,
and denote its cardinality by $|\lambda|$. We~denote the set of
all partitions by $\mathbb{Y}$.
For $\square=(i,j)\in\Z_{>0}^2$ we set $a_\lambda(\square)=\lambda_i-j$
(the arm length of~$\square$)
and $\ell_\lambda(\square)=\lambda'_j-i$ (the leg length of~$\square$).
In~the last formulas we set $\lambda_i=0$ if $i>\ell(\lambda)$
(resp.~$\lambda'_j=0$ if $j>\ell(\lambda')$). For a pair of partitions ($\lambda, \mu$)
and $u\in\C$
we set
\begin{gather*}
N_{\lambda,\mu}(u)=\prod_{\square\in \lambda}\big(
1-q^{-\ell_{\lambda}(\square)-a_\mu(\square)-1}u\big)
\prod_{\square\in \mu}\big(
1-q^{\ell_{\mu}(\square)+a_\lambda(\square)+1}u\big),
\end{gather*}
which we call a Nekrasov factor.
We~set $|\bla|=\sum_{i=1}^N\big|\lambda^{(i)}\big|$ for an element
$\bla=\big(\lambda^{(1)},\dots,\lambda^{(N)}\big)$ of~$\Y^N$.

\section{Contiguity relations}\label{S2}

\subsection[q-conformal blocks]{$\boldsymbol q$-conformal blocks}

For $\sigma\in \C^N$, set $\Delta_\sigma=\sum_{i=1}^N\big( \sigma^{(i)}\big)^2/2$.
We~define $h_i\in\C^N$, $i=1,\dots,N$, as
\begin{gather*}
h_i^{(j)}=\delta_{ij}-\frac{1}{N},
\end{gather*}
which may be regarded as the weights of the vector representation of~$\frak{sl}_N$.
We~choose the nor\-ma\-lization factor as
\begin{gather*}
\N\Bigl({\kern-1pt\theta\atop{\sigma_{2} \phantom{h_2} \sigma_{1}}}
\Bigr)=\frac{\prod_{k,k'=1}^NG_q\big(1+\sigma_2^{(k)}-\theta-\sigma_1^{(k')}\big)}
{\prod_{1\leq k<k'\leq N}G_q\big(1+\sigma_2^{(k)}-\sigma_2^{(k')}\big)
G_q\big(1-\sigma_1^{(k)}+\sigma_1^{(k')}\big)}
\end{gather*}
for $\theta\in\C$, $\sigma_i\in\C^N$, $i=1,2$.

\begin{Definition}
Let $m\in\Z_{\geq 2}$. For $\theta_k\in\C$ ($1\leq k\leq m$),
and $\sigma_k= \big(\sigma_k^{(1)}, \dots, \sigma_k^{(N)}\big) \in \C^{N}$,
$k=0, \dots , m$, such that $\sum_{j=1}^{N} \sigma_k^{(j)}=0$,
 we define the $m+2$ point $q$-conformal block by
 \begin{gather}
\mathcal{F}\left({
{\phantom{000} \theta_m\phantom{h000}
\theta_{m-1}\phantom{h0}\phantom{h0}
\cdots\phantom{h0}\phantom{h0}\theta_1\phantom{h0}}
\atop
{\sigma_{m}\phantom{h0}\sigma_{m-1}
\phantom{h0}\sigma_{m-2}\phantom{h0}
\cdots
\phantom{h_0}\sigma_{1}\phantom{h0}\sigma_{0}}}
;\,x_m,\dots,x_{1}
\right)\nonumber
\\ \qquad
=\prod_{p=1}^{m}\N\Bigl({\kern-1pt\theta_p\atop{\sigma_{p} \phantom{h_2} \sigma_{p-1}}}
\Bigr)q^{N\theta_p\Delta_{\sigma_{p}}}
\prod_{p=1}^{m}x_p^{\Delta_{\sigma_{p}}-\Delta_{N\theta_p h_1}-\Delta_{\sigma_{p-1}}}\nonumber
\\ \qquad\phantom{=}
{}\times\sum_{\bla_1,\dots, \bla_{m-1}\in\Y^N}
\prod_{p=1}^{m-1} \bigg(\frac{q^{N\theta_p} x_p}{x_{p+1}} \bigg)^{\left|\bla_p\right|}
 \frac{\prod_{p=1}^m\prod_{k,k'=1}^{N}N_{\lambda_p^{(k)}, \lambda_{p-1}^{(k')}}
\big(q^{\sigma_p^{(k)}-\theta_p-\sigma_{p-1}^{(k')}}\big)}
{\prod_{p=1}^{m-1}\prod_{k,k'=1}^{N} N_{\lambda_p^{(k)}, \lambda_{p}^{(k')}}
\big(q^{\sigma_p^{(k)}-\sigma_{p}^{(k')}}\big)}, \label{eq_qCB}
\end{gather}
where $\bla_p=\big(\lambda_p^{(1)},\dots,\lambda_p^{(N)}\big)$,
 and $\bla_0=\bla_m=(\varnothing, \dots, \varnothing)$.
\end{Definition}
Our $q$-conformal blocks can be obtained as expectation values of intertwining operators of~the~Ding--Iohara--Miki algebra~\cite{AFS}. We~omit a derivation and refer to~\cite[Section~2]{Jimbo-Nagoya-Sakai}.\footnote{If we replace $\mathcal{V}_{\theta,w}$ on p.~6 in~\cite{Jimbo-Nagoya-Sakai} by the tensor product of the $N$ Fock modules of the Ding--Iohara--Miki algebra, then we will get~\eqref{eq_qCB}, provided that the normalization is suitably chosen.} The series part of a $q$-conformal block is the 5d Nekrasov instanton partition function.

First, we relate a $4$-point conformal block to the $q$-hypergeometric series, which is defined
as follows.
\begin{Definition}
Let $\alpha_1, \dots, \alpha_N,\, \beta_1, \dots, \beta_{N-1} \in \C$.
A series\vspace{-1ex}
\begin{gather*}
 _{N} \phi_{N-1} \left( \begin{matrix}
\alpha_1 , \dots , \alpha_{N} \\
\beta_1 , \dots , \beta_{N-1} \\
\end{matrix} ;\, q, z \right) =\sum_{k=0}^\infty \frac{\big(q^{\alpha_1} ; q\big)_k \cdots \big(q^{\alpha_{N}} ; q\big)_k} {\big(q^{\beta_1}; q\big)_k \cdots \big(q^{\beta_{N-1}} ; q\big)_k (q ; q)_k}z^k
\end{gather*}
is called the $q$-hypergeometric series.
\end{Definition}

Let us recall some elementally facts about the Nekrasov factor.
\begin{Lemma}\label{lem_Young1}
For $\lambda,\mu \in \Y, w \in \C$, we have
\begin{itemize}\itemsep=0pt
\item[$(1)$] $N_{\lambda, \varnothing} \big(q^{-1}\big) \neq 0$ if and only if
 $\lambda=(1^n)$, $n=0,1,2,\dots$,
\item[$(2)$] $N_{(1^n), \varnothing}(w)=(q^{-n+1}w ; q)_n$,
\item[$(3)$] $N_{\varnothing, (1^n)}(w)=(w ; q)_n$,
\item[$(4)$] $N_{(1^n), (1^n)}(w)=\big(q^{-n}w ; q\big)_n (qw ; q)_n$.
\end{itemize}
\end{Lemma}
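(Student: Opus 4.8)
All four identities reduce to specializing the defining product for $N_{\lambda,\mu}(u)$ to one-column (or empty) diagrams, so the first thing I would record is the relevant arm and leg data. For $\lambda=(1^n)$ the boxes are precisely the $(i,1)$ with $1\le i\le n$ and $\lambda'=(n)$, whence $a_{(1^n)}((i,1))=\lambda_i-1=0$ and $\ell_{(1^n)}((i,1))=\lambda'_1-i=n-i$; and by the convention that extends the statistics outside the diagram, the empty partition contributes $a_\varnothing((i,j))=-j$ and $\ell_\varnothing((i,j))=-i$. With these substituted, each product over a single column collapses to a product of $n$ factors linear in a power of $q$, which I would then reindex into one $q$-Pochhammer symbol.

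For (2) I would take $\mu=\varnothing$, so only the first product survives; the exponent of the box $(i,1)$ is $-\ell_\lambda-a_\varnothing-1=-(n-i)-(-1)-1=i-n$, giving $\prod_{i=1}^n\big(1-q^{i-n}w\big)$, which is exactly $(q^{-n+1}w;q)_n$ after setting $k=i-1$. Statement (3) is the mirror computation with $\lambda=\varnothing$: only the second product survives, the exponent $\ell_\mu+a_\varnothing+1=(n-i)+(-1)+1=n-i$ appears, and reindexing yields $(w;q)_n$. For (4), with $\lambda=\mu=(1^n)$, both products contribute; the arms now read $a_{(1^n)}((i,1))=0$ in each, so the first product carries exponent $-(n-i)-0-1=i-n-1$ and the second $(n-i)+0+1=n-i+1$, producing $(q^{-n}w;q)_n$ and $(qw;q)_n$ respectively. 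Each of these is pure bookkeeping once the arm and leg lengths are in place.

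The only part carrying real content is the equivalence in (1), which I would split into two implications. For $\lambda=(1^n)$ I would evaluate (2) at $w=q^{-1}$, obtaining $(q^{-n};q)_n=\prod_{i=1}^n\big(1-q^{i-1-n}\big)$; since $|q|<1$ prevents $q$ from being a root of unity, every exponent $i-1-n$ lies in $\{-n,\dots,-1\}$, so no factor can equal zero and the product is nonzero. Conversely, if $\lambda\ne(1^n)$ then $\lambda_1\ge2$ and the second column is nonempty; its lowest box $(\lambda'_2,2)$ has leg length $\ell_\lambda((\lambda'_2,2))=0$ and $a_\varnothing((\lambda'_2,2))=-2$, so its exponent in $N_{\lambda,\varnothing}(u)$ is $-0-(-2)-1=1$, independently of the shape. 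The corresponding factor $1-qu$ then vanishes at $u=q^{-1}$, forcing $N_{\lambda,\varnothing}(q^{-1})=0$. Pinpointing this single box — and noting that a box $(i,1)$ of the first column has exponent $i-\lambda'_1\le 0$ and so never contributes a zero at $u=q^{-1}$ — is the one place where I expect to argue rather than merely compute.
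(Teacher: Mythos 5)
Your proof is correct: the arm/leg data for $(1^n)$ and $\varnothing$ are computed with the paper's conventions ($a_\varnothing(\square)=-j$, $\ell_\varnothing(\square)=-i$), the Pochhammer reindexings in (2)--(4) check out, and both implications in (1) are sound --- non-vanishing for $\lambda=(1^n)$ via (2) together with $|q|<1$, and vanishing otherwise via the factor with exponent $1$ coming from the box $(\lambda'_2,2)$ when $\lambda_1\ge 2$. The paper itself states this lemma without proof, as recalled elementary facts about the Nekrasov factor, so your direct verification from the definition of $N_{\lambda,\mu}$ is precisely the routine computation it leaves implicit.
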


We~need to substitute a special value into the normalization factor as\vspace{-1ex}
\begin{gather*}
\N\left( \frac{1}{N}\atop{\sigma_{1}-h_i \phantom{h_2} \sigma_{1}}
\right).
\end{gather*}
But then it has a zero factor. We~redefine the normalization factor in this case as\vspace{-.5ex}
\begin{gather*}
\N'\left( \frac{1}{N}\atop{\sigma_2 \phantom{h_2} \sigma_2+h_i}
\right)=\lim_{\epsilon\to 0}\frac{1}{G_q(\epsilon)}\N\left( \frac{1}{N}-\epsilon\atop{\sigma_{2} \phantom{h_2} \sigma_{2}+h_i}
\right).
\end{gather*}
The $q$-conformal block with the special value above is called {\it degenerate} $q$-conformal block.
The reduction of the 5d Nekrasov partition function to the $q$-hypergeometric series was explained in~\cite{BTZ1, BTZ2}, which is stated in our notation as follows.

\begin{Proposition}\label{prop_qHGS}
 For $i=1,\dots, N$, we have
\begin{gather*}
\mathcal{F}\left({
\frac{1}{N}\phantom{0000}\theta_1
\atop
\sigma_2\phantom{00}\sigma_2+h_i\phantom{00}\sigma_0
};\,x_1,x_2
\right)
\\ \qquad
{}=\mathcal{N}q^{\Delta_{\sigma_2}}
\bigg(\frac{q^{N\theta_1}x_2}{x_1}\bigg)^{\sigma_2^{(i)}+(1-1/N)/2}
\frac{\prod_{k=1}^N\Gamma_q\big(1-\frac{1}{N}+\sigma_2^{(i)}-\theta_1-\sigma_0^{(k)}\big)}
{\prod_{k=1}^N\Gamma_q\big(1+\sigma_2^{(i)}-\sigma_2^{(k)}\big)}
\\ \qquad\phantom{=}
{}\times
{}_{N}\phi_{N-1}\left(\begin{matrix}
\big\{1-\tfrac1N+\sigma_{2}^{(i)}-\theta_1-\sigma_{0}^{(k)}\big\}_{k=1}^N
 \\[.5ex]
\big\{1+\sigma_{2}^{(i)}-\sigma_{2}^{(k)}\big\}_{k=1,\,k\neq i}^N
\end{matrix} ;\, q, \frac{q^{N\theta_1}x_2}{x_1} \right)\!,
\\
\mathcal{F}\left({
\theta_1\phantom{0000}\frac{1}{N}
\atop
\sigma_2\phantom{00}\sigma_0-h_i\phantom{00}\sigma_0
};\,x_2,x_1
\right)
\\ \qquad
{}=\mathcal{N}q^{\Delta_{\sigma_0}}\bigg(\frac{qx_1}{x_2}\bigg)^{-\sigma_0^{(i)}+(1-1/N)/2}
\frac{\prod_{k=1}^N\Gamma_q\big(1-\frac{1}{N}+\sigma_2^{(k)}-\theta_1-\sigma_0^{(i)}\big)}
{\prod_{k=1}^N\Gamma_q\big(1+\sigma_0^{(k)}-\sigma_0^{(i)}\big)}
\\ \qquad\phantom{=}
{}\times
{}_{N}\phi_{N-1}\left(\begin{matrix}
\big\{1-\frac1N+\sigma_{2}^{(k)}-\theta_2-\sigma_{0}^{(i)}\big\}_{k=1}^N
\\[.5ex]
\big\{1+\sigma_{0}^{(k)}-\sigma_{0}^{(i)}\big\}_{k=1,\,k\neq i}^N
\end{matrix} ;\, q, \frac{qx_1}{x_2} \right)\!,
\end{gather*}
where
\begin{gather*}
\mathcal{N}=q^{N\theta_1\Delta_{\sigma_2}}x_1^{-(1-1/N)/2}x_2^{\Delta_{\sigma_2}-\Delta_{N\theta_1h_1}-\Delta_{\sigma_0}}
\\ \hphantom{\mathcal{N}=}
{}\times \frac{\prod_{k,k'=1}^NG_q\big(1-\frac{1}{N}+\sigma_2^{(k)}-\theta_1-\sigma_0^{(k')}\big)}
{\prod_{1\leq k<k'\leq N}G_q\big(1+\sigma_2^{(k)}-\sigma_2^{(k')}\big)
G_q\big(1-\sigma_0^{(k)}+\sigma_0^{(k')}\big)}\!.
\end{gather*}
\end{Proposition}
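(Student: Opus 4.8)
The plan is to specialize the definition \eqref{eq_qCB} to the four-point case $m=2$, where the instanton sum runs over a single $\bla_1\in\Y^N$, and then insert the degenerate data. For the first identity I set $\theta_2=1/N$ and $\sigma_1=\sigma_2+h_i$; for the second I set $\theta_1=1/N$ and $\sigma_1=\sigma_0-h_i$, in each case replacing the singular normalization by $\N'$. In both cases the strategy is identical: I will show that the special values annihilate all but a one-parameter family of terms in the sum over $\bla_1$, so that the five-dimensional Nekrasov sum collapses to a single series indexed by one integer $n\ge 0$, which is then matched termwise against ${}_{N}\phi_{N-1}$. Throughout, the identity is understood either as an identity of formal $q$-series or in the region where the series converges.

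The combinatorial core is a vanishing analysis of the numerator Nekrasov factors carrying the degenerate field. Take the second identity. Since $\bla_0=(\varnothing,\dots,\varnothing)$, substituting $\sigma_1=\sigma_0-h_i$ turns the $p=1$ numerator factors into $N_{\lambda_1^{(k)},\varnothing}\big(q^{\sigma_0^{(k)}-\sigma_0^{(k')}-\delta_{ik}}\big)$. The diagonal terms with $k=k'\neq i$ are $N_{\lambda_1^{(k)},\varnothing}(1)$, which vanishes for every nonempty partition, since the bottom box of the first column contributes a factor $1-q^{0}=0$; hence $\lambda_1^{(k)}=\varnothing$ for all $k\neq i$. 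The remaining diagonal term $k=k'=i$ is $N_{\lambda_1^{(i)},\varnothing}\big(q^{-1}\big)$, which by Lemma~\ref{lem_Young1}(1) is nonzero precisely when $\lambda_1^{(i)}=(1^n)$ is a single column. The sum therefore collapses to $\sum_{n\ge 0}$ over $\lambda_1^{(i)}=(1^n)$. The first identity is treated the same way, except that the degenerate diagonal factor is now $N_{\varnothing,\lambda_1^{(i)}}\big(q^{-1}\big)$; using $a_\varnothing(\square)=-j$ one checks that this is nonzero exactly when $\lambda_1^{(i)}$ is a single row, and one evaluates it by the transpose analogues of Lemma~\ref{lem_Young1}.

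With the sum reduced to $\lambda_1^{(i)}=(1^n)$ and all other components empty, every surviving Nekrasov factor is explicit by Lemma~\ref{lem_Young1}(2)--(4). The $p=2$ numerator gives, via part~(3), exactly the upper $q$-Pochhammer symbols of the stated ${}_{N}\phi_{N-1}$. The $p=1$ numerator contributes $\prod_{k'=1}^{N}\big(q^{-n+\sigma_0^{(i)}-\sigma_0^{(k')}};q\big)_n$ by part~(2), while the $p=1$ denominator contributes the off-diagonal factors $\prod_{k'\neq i}\big(q^{-n+\sigma_0^{(i)}-\sigma_0^{(k')}};q\big)_n$ (part~(2)) and $\prod_{k\neq i}\big(q^{1+\sigma_0^{(k)}-\sigma_0^{(i)}};q\big)_n$ (part~(3)), together with the diagonal factor $N_{(1^n),(1^n)}(1)=\big(q^{-n};q\big)_n(q;q)_n$ (part~(4)). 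The off-diagonal numerator and denominator pieces cancel, and the leftover $k'=i$ factor $\big(q^{-n};q\big)_n$ cancels the matching factor inside $N_{(1^n),(1^n)}(1)$; what survives in the denominator is precisely $(q;q)_n\prod_{k\neq i}\big(q^{1+\sigma_0^{(k)}-\sigma_0^{(i)}};q\big)_n$. Combined with the factor $\big(q\,x_1/x_2\big)^{n}$ coming from the $|\bla_1|$-power in \eqref{eq_qCB} (here $N\theta_1=1$), the sum is term-by-term the series ${}_{N}\phi_{N-1}$ with variable $q\,x_1/x_2$ and the stated parameters.

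It then remains to reorganize the $\bla_1$-independent prefactor into $\mathcal{N}\,q^{\Delta_{\sigma_0}}(\cdots)\prod_k\Gamma_q/\prod_k\Gamma_q$. The collection of the $q$- and $x$-powers is routine; in particular the identity $\Delta_{N\theta_1 h_1}=\Delta_{h_1}=(1-1/N)/2$ produces the shift $(1-1/N)/2$ in the displayed exponent. The step where I expect the real work to lie is the simplification of the regularized normalization $\N'$. By definition $\N'$ is $\lim_{\epsilon\to0}G_q(\epsilon)^{-1}\N(1/N-\epsilon;\,\cdot\,)$, and one first checks that, after the substitution, the numerator of $\N$ acquires exactly one vanishing factor $G_q(\epsilon)$ (from the entry $k=k'=i$, whose argument degenerates to $0$), so the limit is the single-factor cancellation that simply deletes this entry. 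The delicate part is the subsequent bookkeeping: the arguments of the remaining $G_q$'s in $\N'$ and in the $p=2$ factor $\N$ carry the $h_i$-shift, and one must telescope these shifts through $G_q(u+1)=\Gamma_q(u)G_q(u)$ so that the shifted $G_q$'s reorganize into the unshifted $G_q$'s of $\mathcal{N}$ times precisely the advertised ratio $\prod_k\Gamma_q\big(1-\tfrac1N+\cdots\big)/\prod_k\Gamma_q\big(1+\cdots\big)$. Carrying out this telescoping without dropping or double-counting a factor is what completes the identification.
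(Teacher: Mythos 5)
Your proposal is correct and takes essentially the same route as the paper: the paper's own proof is the one-line remark that the result follows ``by the definition of $q$-conformal blocks and Lemma~\ref{lem_Young1}'', i.e.\ exactly your specialization of \eqref{eq_qCB}, the collapse of the sum over $\bla_1$ to a single column/row via the vanishing of the diagonal Nekrasov factors, and the termwise matching with ${}_N\phi_{N-1}$. Your write-up simply supplies the details (the cancellation pattern of the Pochhammer symbols and the single-zero-factor treatment of $\N'$) that the paper leaves to the reader.
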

\begin{proof}
By the definition of~$q$-conformal blocks and Lemma~\ref{lem_Young1}, we obtain
the desired re\-sults.
\end{proof}

\subsection{Connection problem}

The connection formula of the $q$-hypergeometric series is well known. See the equation (4.5.2) on p.~121 in~\cite{Gasper-Rahman}
for example. From Proposition~\ref{prop_qHGS}, the connection formula of
the degenerate $4$-point $q$-conformal block reads as
\begin{gather}
\mathcal{F}\left({
\frac{1}{N}\phantom{0000}\theta_1
\atop
\sigma_2\phantom{00}\sigma_2+h_i\phantom{00}\sigma_0};\,x_1,x_2
\right)\nonumber
\\ \qquad
{}=\sum_{j=1}^N\mathcal{F}\left({
\theta_1\phantom{0000}\frac{1}{N}
\atop
\sigma_2\phantom{00}\sigma_0-h_j\phantom{00}\sigma_0
};\,x_2,x_1
\right) \B_{j,i}\left[\begin{matrix}
 \theta_1&\frac{1}{N}
 \\
 \sigma_2&\sigma_0\end{matrix}
\Bigl| \frac{x_2}{x_1}\right]q^{N\theta_1^2/2-\theta_1/2}\bigg(\frac{x_2}{x_1}\bigg)^{\theta_1},
\label{eq_connection_problem_deg_4pt}
\end{gather}
where $\B\left[\begin{smallmatrix}
 \theta_1&\frac{1}{N}
 \\
 \sigma_2&\sigma_0\end{smallmatrix}
\Bigl| x\right]$ is the $N$ by $N$ matrix given by
\begin{gather*}
\B_{j,i}\left[\begin{matrix}
 \theta_1&\frac{1}{N}
 \\
 \sigma_2&\sigma_0\end{matrix}
\Bigl| x\right]
=\frac{\vartheta\big(1\!-\frac1N+\sigma_2^{(i)}\!+(N\!-1)\theta_1\!-\sigma_0^{(j)}+u\big)}
{\vartheta(N\theta_1+u)}
\frac{\prod_{k=1,\,k\neq i}^N\vartheta\big(\frac1N\!-\sigma_2^{(k)}\!+\theta_1+\sigma_0^{(j)}\big)}{\prod_{k=1,k\neq j}^N\vartheta\big(\sigma_0^{(j)}\!-\sigma_0^{(k)}\big)}
\end{gather*}
with $x=q^u$. We~mention that the connection problem of the $q$-hypergeometric series was used to derive the three-point function for the $q$-deformed correlators where conformal blocks are controlled by the $q$-Virasoro algebra~\cite{NPP}.

The connection matrix $\B\left[\begin{smallmatrix}
 \theta_1&\frac{1}{N}
 \\
 \sigma_2&\sigma_0\end{smallmatrix}
\Bigl| x\right]
$ enjoys the following properties.
\begin{Lemma}
We~have
\begin{gather}
\B\left[\begin{matrix}
 \theta_1&\frac{1}{N} \\ \sigma_2&\sigma_0\end{matrix}
\Bigl| qx\right]
=\B\left[\begin{matrix}
 \theta_1&\frac{1}{N} \\ \sigma_2&\sigma_0\end{matrix}
\Bigl| x\right],
\label{eq_q-shift_on_x_connection_matrix}
\\[.5ex]
\B\left[\begin{matrix}
 \theta_1+1&\frac{1}{N} \\ \sigma_2&\sigma_0\end{matrix}
\Bigl| x\right]
=(-1)^N\B\left[\begin{matrix}
 \theta_1&\frac{1}{N} \\ \sigma_2&\sigma_0\end{matrix}
\Bigl| x\right],
\label{eq_shifts_connection_matrix_theta+1}
\\[.5ex]
\B\left[\begin{matrix}
 \theta_1&\frac{1}{N} \\ \sigma_2+h_i&\sigma_0+h_j\end{matrix}
\Bigl| x\right]
=\B\left[\begin{matrix}
 \theta_1&\frac{1}{N} \\ \sigma_2&\sigma_0\end{matrix}
\Bigl| x\right],\qquad i,j=1,\dots,N,
\label{eq_shifts_conncetion_matrix_sigma}
\\[.5ex]
\B\left[\begin{matrix}
 \theta_1+1/N&\frac{1}{N} \\ \sigma_2&\sigma_0+h_i\end{matrix}
\Bigl| x\right]
=-\B\left[\begin{matrix}
 \theta_1&\frac{1}{N} \\ \sigma_2&\sigma_0\end{matrix}
\Bigl| x\right],\qquad i=1,\dots, N,
\label{eq_shifts_conncetion_matrix_theta}
\\[.5ex]
\det \B\left[\begin{matrix}
 \theta_1&\frac{1}{N} \\ \sigma_2&\sigma_0\end{matrix}
\Bigl| x\right]=(-1)^{N-1}\frac{\vartheta(u)}{\vartheta(N\theta_1+u)}
\prod_{1\leq i<j\leq N}\frac{\vartheta\big(\sigma_2^{(i)}-\sigma_2^{(j)}\big)}
{\vartheta\big(\sigma_0^{(i)}-\sigma_0^{(j)}\big)}.\label{eq_det_connction_matrix}
\end{gather}
\end{Lemma}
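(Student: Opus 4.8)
The plan is to split the five relations into the four entrywise quasi-periodicities \eqref{eq_q-shift_on_x_connection_matrix}--\eqref{eq_shifts_conncetion_matrix_theta} and the determinant \eqref{eq_det_connction_matrix}; only the last is substantial. All of the former reduce to the two facts $\vartheta(u+1)=-\vartheta(u)$ and $\vartheta(-u)=\vartheta(u)$ together with $h_i^{(j)}=\delta_{ij}-1/N$. For \eqref{eq_q-shift_on_x_connection_matrix} the sole $u$-dependence of $\B_{j,i}$ is the ratio $\vartheta\big(1-\frac1N+\sigma_2^{(i)}+(N-1)\theta_1-\sigma_0^{(j)}+u\big)/\vartheta(N\theta_1+u)$, and sending $x\mapsto qx$, i.e.\ $u\mapsto u+1$, multiplies both numerator and denominator by $-1$. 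For \eqref{eq_shifts_connection_matrix_theta+1}, shifting $\theta_1\mapsto\theta_1+1$ multiplies the numerator theta by $(-1)^{N-1}$, the denominator $\vartheta(N\theta_1+u)$ by $(-1)^{N}$, and each of the $N-1$ factors of $\prod_{k\neq i}\vartheta\big(\frac1N-\sigma_2^{(k)}+\theta_1+\sigma_0^{(j)}\big)$ by $-1$, for a net factor $(-1)^{(N-1)-N+(N-1)}=(-1)^{N}$.

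For \eqref{eq_shifts_conncetion_matrix_sigma} and \eqref{eq_shifts_conncetion_matrix_theta} I would run the same bookkeeping. Under the simultaneous shift $\sigma_2\mapsto\sigma_2+h_i$, $\sigma_0\mapsto\sigma_0+h_j$, every theta-argument of $\B$ moves by an integer---the fractional parts $\pm1/N$ cancel because $h_i^{(k)}=\delta_{ik}-1/N$---and summing the induced powers of $-1$ across the numerator and the two products gives an \emph{even} total exponent for each entry, proving \eqref{eq_shifts_conncetion_matrix_sigma}. The shift $\theta_1\mapsto\theta_1+1/N$, $\sigma_0\mapsto\sigma_0+h_i$ likewise produces integer moves whose signs multiply to $-1$ in every entry, which is \eqref{eq_shifts_conncetion_matrix_theta}. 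Both computations are routine once the shifts are tabulated.

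The heart of the lemma is \eqref{eq_det_connction_matrix}, and the key is to exhibit an elliptic Cauchy (Frobenius) structure. Writing $w_i^{(j)}=\frac1N-\sigma_2^{(i)}+\theta_1+\sigma_0^{(j)}$ and using $\prod_{k\neq i}\vartheta\big(w_k^{(j)}\big)=\big(\prod_{k=1}^N\vartheta(w_k^{(j)})\big)/\vartheta(w_i^{(j)})$, I would pull out of the determinant the global factor $\vartheta(N\theta_1+u)^{-N}$ and the row factors $R_j=\prod_{k}\vartheta(w_k^{(j)})/\prod_{k\neq j}\vartheta(\sigma_0^{(j)}-\sigma_0^{(k)})$, reducing $\det\B$ to $\det\big[\vartheta\big(1-\frac1N+\sigma_2^{(i)}+(N-1)\theta_1-\sigma_0^{(j)}+u\big)/\vartheta(w_i^{(j)})\big]$. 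The decisive observation is the identity $\big(1-\frac1N+\sigma_2^{(i)}+(N-1)\theta_1-\sigma_0^{(j)}\big)+w_i^{(j)}=1+N\theta_1$; setting $z=1+N\theta_1+u$, $a_i=\sigma_2^{(i)}-\frac1N$ and $b_j=-\theta_1-\sigma_0^{(j)}$ and using $\vartheta(-u)=\vartheta(u)$, this reduced matrix becomes exactly $\vartheta(z+a_i+b_j)/\vartheta(a_i+b_j)$.

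At this point I would invoke the Frobenius determinant formula
\[
\det_{1\le i,j\le N}\left[\frac{\vartheta(z+a_i+b_j)}{\vartheta(z)\,\vartheta(a_i+b_j)}\right]
=\frac{\vartheta\big(z+\sum_i a_i+\sum_j b_j\big)}{\vartheta(z)}\;
\frac{\prod_{i<j}\vartheta(a_i-a_j)\,\vartheta(b_i-b_j)}{\prod_{i,j}\vartheta(a_i+b_j)} .
\]
The traceless conditions $\sum_j\sigma_2^{(j)}=\sum_j\sigma_0^{(j)}=0$ give $\sum_i a_i=-1$ and $\sum_j b_j=-N\theta_1$, so the leading argument collapses to $z+\sum_i a_i+\sum_j b_j=u$; the products $\prod_{i<j}\vartheta(a_i-a_j)$ and $\prod_{i<j}\vartheta(b_i-b_j)$ reproduce $\prod_{i<j}\vartheta(\sigma_2^{(i)}-\sigma_2^{(j)})$ and $\prod_{i<j}\vartheta(\sigma_0^{(i)}-\sigma_0^{(j)})$ (by evenness), while $\prod_{i,j}\vartheta(a_i+b_j)=\prod_j\prod_k\vartheta(w_k^{(j)})$ cancels the numerators of the $R_j$. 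Finally $\vartheta(z)^{N-1}=\vartheta(1+N\theta_1+u)^{N-1}=(-1)^{N-1}\vartheta(N\theta_1+u)^{N-1}$ produces the sign $(-1)^{N-1}$ and, against $\vartheta(N\theta_1+u)^{-N}$, the single factor $\vartheta(N\theta_1+u)^{-1}$, and the doubled product $\prod_{j\neq k}\vartheta(\sigma_0^{(j)}-\sigma_0^{(k)})=\prod_{j<k}\vartheta(\sigma_0^{(j)}-\sigma_0^{(k)})^2$ coming from the denominators of the $R_j$ leaves exactly one power of $\prod_{j<k}\vartheta(\sigma_0^{(j)}-\sigma_0^{(k)})$ in the denominator; this is precisely \eqref{eq_det_connction_matrix}. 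The main obstacle is this last normalization step: matching the sign and the power of $\vartheta(N\theta_1+u)$, and confirming the doubling of the $\sigma_0$-product. Should a self-contained argument be preferred, one may instead regard $\det\B$ as a function of a single variable, use \eqref{eq_q-shift_on_x_connection_matrix}--\eqref{eq_shifts_conncetion_matrix_theta} to fix its quasi-periods, read off its zeros and poles from the entries, and pin the constant by one special value---which amounts to reproving the Frobenius formula in this setting.
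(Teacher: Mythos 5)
Your route to \eqref{eq_det_connction_matrix} is genuinely different from the paper's. For the quasi-periodicities \eqref{eq_q-shift_on_x_connection_matrix}--\eqref{eq_shifts_conncetion_matrix_theta} the paper, like you, just reads them off the definition. But for the determinant the paper never invokes a packaged Cauchy-type formula: it runs an iterated row reduction, using the three-term theta relation to evaluate the $2\times 2$ combinations $a_{11}a_{ij}-a_{i1}a_{1j}$ and collapse the $N\times N$ determinant to an $(N-1)\times(N-1)$ one of the same shape, and repeats. Your observation that the entries hide an elliptic Cauchy matrix, via $\bigl(1-\tfrac1N+\sigma_2^{(i)}+(N-1)\theta_1-\sigma_0^{(j)}\bigr)+w_i^{(j)}=1+N\theta_1$, together with the Frobenius determinant formula, is a cleaner and shorter route; in fact the paper's three-term identity \emph{is} the $N=2$ Frobenius identity in disguise, so the paper's calculation amounts to proving inline, by condensation, exactly the case of Frobenius you quote. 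If you take your route you should say why the Frobenius formula holds for this particular $\vartheta$ (e.g., that it follows from the paper's stated three-term relation by the standard induction, or from $\vartheta(u)=q^{u^2/2}f(u)$ with $f$ odd and the classical formula for $f$).

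There is, however, a concrete error running through your sign bookkeeping: this $\vartheta$ is \emph{odd}, not even. The paper's relation $\vartheta(u+1)=-\vartheta(u)=\vartheta(-u)$ asserts $\vartheta(-u)=-\vartheta(u)$ (consistently, $\vartheta(0)=\Theta_q(1)=0$). Evenness is never needed for \eqref{eq_q-shift_on_x_connection_matrix}--\eqref{eq_shifts_conncetion_matrix_theta}, since only integer shifts of arguments occur there, so those computations stand. But in the determinant you use evenness four times: $\vartheta\bigl(w_i^{(j)}\bigr)=\vartheta(a_i+b_j)$ (in truth $=-\vartheta(a_i+b_j)$, costing $(-1)^N$ in the determinant), $\prod_{i<j}\vartheta(b_i-b_j)=\prod_{i<j}\vartheta\bigl(\sigma_0^{(i)}-\sigma_0^{(j)}\bigr)$ (costing $(-1)^{N(N-1)/2}$), $\prod_{i,j}\vartheta(a_i+b_j)=\prod_{j,k}\vartheta\bigl(w_k^{(j)}\bigr)$ (costing $(-1)^N$), and $\prod_{j\neq k}\vartheta\bigl(\sigma_0^{(j)}-\sigma_0^{(k)}\bigr)=\prod_{j<k}\vartheta\bigl(\sigma_0^{(j)}-\sigma_0^{(k)}\bigr)^2$ (costing $(-1)^{N(N-1)/2}$). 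Worse, your premise contradicts your main tool: the Frobenius formula is inconsistent for an even function, since swapping $a_1\leftrightarrow a_2$ flips the sign of the determinant on the left but, under evenness, leaves the right-hand side unchanged. Fortuitously the four stray signs multiply to $(-1)^{2N+N(N-1)}=+1$, so your final answer agrees with \eqref{eq_det_connction_matrix}; but as written the derivation is wrong at each of these steps, and the ``normalization step'' you flag as the main obstacle cannot be settled coherently until the parity of $\vartheta$ is corrected. Redo the bookkeeping with $\vartheta(-u)=-\vartheta(u)$ throughout and your argument goes through verbatim, with the cancellation exactly as above.
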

\begin{proof}
The identities~\eqref{eq_q-shift_on_x_connection_matrix},~\eqref{eq_shifts_connection_matrix_theta+1},
\eqref{eq_shifts_conncetion_matrix_sigma}, and~\eqref{eq_shifts_conncetion_matrix_theta} are
immediate consequences of the definition of the connection matrix. The expression of the
determinant~\eqref{eq_det_connction_matrix} of the connection matrix can be verified by straightforward
calculations as follows.

By definition, we have
\begin{gather*}
\det\B\left[\begin{matrix}
\theta_1&\frac{1}{N}	\\	\sigma_2&\sigma_0\end{matrix}
\Bigl| x\right]=\frac{1}{\vartheta(N\theta_1+u)^N\prod_{i=1}^N\prod_{k\neq i}\vartheta\big(\sigma_0^{(i)}-\sigma_0^{(k)}\big)}	\det(a_{ij}),
\end{gather*}
where
\begin{gather*}
a_{ij}=\vartheta\big(1-\tfrac1N+\sigma_2^{(j)}+(N-1)\theta_1-\sigma_0^{(i)}+u\big)\prod_{k\neq j}^N\vartheta\big(\tfrac1N-\sigma_2^{(k)}+\theta_1+\sigma_0^{(i)}\big).
\end{gather*}
Put $x_{ij}=1-\tfrac1N+\sigma_2^{(j)}+(N-1)\theta_1-\sigma_0^{(i)}+u$ and
$y_{ij}=\tfrac1N-\sigma_2^{(j)}+\theta_1+\sigma_0^{(i)}$.
Using the relation on the theta function:
\begin{gather*}
\vartheta(x+y)\vartheta(x-y)\vartheta(u+v)\vartheta(u-v)
-\vartheta(x+v)\vartheta(x-v)\vartheta(u+y)\vartheta(u-y)
\\ \qquad
{}=\vartheta(x+u)\vartheta(x-u)\vartheta(y+v)\vartheta(y-v),
\end{gather*}
we obtain for $i,j\geq 2$
\begin{gather*}
a_{11}a_{ij}-a_{i1}a_{1j}=\left(\vartheta(x_{11})\vartheta(y_{1j})\vartheta(x_{ij})\vartheta(y_{i1})
-\vartheta(x_{i1})\vartheta(y_{ij})\vartheta(x_{1j})\vartheta(y_{11})\right)
\prod_{k\neq 1,j}\vartheta(y_{1k})	\vartheta(y_{ik})
\\ \hphantom{a_{11}a_{ij}-a_{i1}a_{1j}}
{}=\vartheta(1+N\theta_1+u)\vartheta\big(1-\tfrac2N+(N-2)\theta_1+u+\sigma_2^{(1)}+\sigma_2^{(j)}
-\sigma_0^{(1)}-\sigma_0^{(i)}\big)
\\ \hphantom{a_{11}a_{ij}-a_{i1}a_{1j}=}
{}\times \vartheta\big(\sigma_2^{(1)}-\sigma_2^{(j)}\big)\vartheta\big({-}\sigma_0^{(1)}+\sigma_0^{(i)}\big)
\prod_{k\neq 1,j}\vartheta(y_{1k})\vartheta(y_{ik}).
\end{gather*}
Hence, the determinant becomes
\begin{gather*}
\det\B\left[\begin{matrix}
\theta_1&\frac{1}{N}\\	\sigma_2&\sigma_0\end{matrix}\Bigl| x\right]
=\frac{\prod_{k=2}^N\vartheta\big(\sigma_2^{(1)}-\sigma_2^{(k)}\big)}
{\vartheta(N\theta_1+u)\vartheta(x_{11})^{N-2}\prod_{i=2}^N\prod_{k\neq i}\vartheta\big(\sigma_0^{(i)}-\sigma_0^{(k)}\big)}	\det(b_{ij}),
\end{gather*}
where
\begin{gather*}
b_{ij}=\vartheta\big(1-\tfrac2N+(N-2)\theta_1+u+\sigma_2^{(1)}+\sigma_2^{(j)}-\sigma_0^{(1)}-\sigma_0^{(i)}\big)
\!\prod_{k\neq 1,j}\!\vartheta(y_{ik}),\qquad 2\leq i,j\leq N.
\end{gather*}
In~the same way, by computing $b_{22}b_{ij}-b_{i2}b_{2j}$, the determinant $\det(b_{ij})$ reduces to a determinant of size $N-2$ whose entries are products of the theta functions. Repeating the computation, we~obtain the desired result~\eqref{eq_det_connction_matrix}.
\end{proof}

We note that the connection matrix has periodicity on the variable $x$ and
parameters $\sigma_0$, $\sigma_2$, and $\theta_1$. Furthermore, the determinant
has a simple form in terms of the theta function.

We want to prove the following connection formula for $6$-point degenerate $q$-conformal block
\begin{gather}
\mathcal{F}\left(
{\theta_4\phantom{000} \frac{1}{N}\phantom{00000} \theta_2 \phantom{000} \theta_1
\phantom{}
\atop
\sigma_4 \phantom{00} \sigma_3\phantom{00} \sigma_3+h_i \phantom{00}
\sigma_1\phantom{00} \sigma_0}
 ;\, x_4,x_2,x_3,x_1\right) \nonumber
\\ \qquad
{}=\sum_{j=1}^{N} \mathcal{F}
\left(
{\theta_4\phantom{000} \theta_2\phantom{00000} \frac{1}{N} \phantom{000} \theta_1
\phantom{}
\atop
\sigma_4 \phantom{00} \sigma_3\phantom{00} \sigma_1-h_j \phantom{00}
\sigma_1\phantom{00} \sigma_0}
 ;\, x_4,x_3,x_2,x_1
 \right)\B_{j,i}\left[\begin{matrix}
 \theta_2&\frac{1}{N}\nonumber
 \\
 \sigma_3&\sigma_1\end{matrix}
\Bigl| \frac{x_3}{x_2}\right]
\\ \qquad\phantom{=}
{}\times q^{N\theta_2^2/2-\theta_2/2}\bigg(\frac{x_3}{x_2}\bigg)^{\theta_2}.\label{eq_CP_6qCB}
\end{gather}
The constant with respect to the variables $x_1$, $x_4$
in the $6$-point degenerate $q$-conformal block is $q$-hypergeometric series.
In~this case, the connection problem is solved. We~solve the connection problem
\eqref{eq_CP_6qCB} by solving connection problem of each coefficient of~$x_1$, $x_4$ in the $6$-point degenerate $q$-conformal block. We~consider the following series obtained by taking the coefficient of~$x_1^{|\bla|}x_4^{-|\bnu|}$ in the $6$-point $q$-conformal block:
\begin{gather}\label{coefficient_4pt_qCB}
S_{\bla, \bnu}\left({
\theta_3\phantom{000}\theta_2
\atop
\sigma_3\phantom{00}\sigma_2\phantom{00}\sigma_1
};\,x_3,x_2
\right)
=\sum_{\bmu \in \Y^{N}} \bigg(\frac{q^{N\theta_2} x_2}{x_3}\bigg)^{|\bmu|} \prod_{k,k'=1}^{N}\frac{N_{\lambda_k, \mu_{k'}}(w_{k, k'}) N_{{\mu_k}, \nu_{k'}}(z_{k, k'})}{ N_{\mu_k, \mu_{k'}}(u_{k, k'})}.
\end{gather}
Here we put
\begin{gather*}
w_{k, k'}=q^{\sigma_k^{(3)}-\theta_3-\sigma_{k'}^{(2)}}, \qquad
z_{k, k'}=q^{\sigma_k^{(2)}-\theta_2-\sigma_{k'}^{(1)}}, \qquad
u_{k, k'}=q^{\sigma_k^{(2)}-\sigma_{k'}^{(2)}}.
\end{gather*}
We note that for $\bla=\bnu=(\varnothing,\dots, \varnothing)$,
\begin{gather*}
\mathcal{F}\left({
\theta_3\phantom{000}\theta_2
\atop
\sigma_3\phantom{00}\sigma_2\phantom{00}\sigma_1
};\,x_3,x_2
\right)
\\ \qquad
{}=\prod_{p=2}^{3}\N\Bigl({\kern-1pt\theta_p\atop{\sigma_{p} \phantom{h_2} \sigma_{p-1}}}
\Bigr)q^{N\theta_p\Delta_{\sigma_{p}}}
x_p^{\Delta_{\sigma_{p}}-\Delta_{N\theta_p h_1}-\Delta_{\sigma_{p-1}}}\,
S_{\varnothing, \varnothing}\left({
\theta_3\phantom{000}\theta_2
\atop
\sigma_3\phantom{00}\sigma_2\phantom{00}\sigma_1
};\,x_3,x_2
\right)\!.
\end{gather*}

From Proposition~\ref{prop_qHGS} if we set $\theta_3=1/N$,
$\sigma_2=\sigma_3+h_i$ or $\theta_2=1/N$,
$\sigma_2=\sigma_1-h_i$ in~\eqref{coefficient_4pt_qCB}, then
$S_{\varnothing,\varnothing}$ becomes the $q$-hypergeomeric series ${}_N\phi_{N-1}$.
In~that case, $\bmu$ in the right hand side of~\eqref{coefficient_4pt_qCB}
restricts to $\mu_i=(1^n)$ ($n=0,1,2,\dots$), $\mu_k=\varnothing$ ($k\neq i$).
For the general $\bla$, $\bnu$, a tuple $\bmu$ of~partitions
also takes the following particular forms.

Let us introduce the following operations on partitions $\lambda\in \Y$:
\begin{gather}
\bar{\lambda}=(\lambda_1-1,\dots, \lambda_{\ell(\lambda)}-1)\,,
\label{lambda-bar}
\\
r_n(\lambda)=(\lambda_1+1,\dots, \lambda_n+1,\lambda_{n+2},\dots),\qquad
n\in\Z_{\ge0}.
\label{rn}
\end{gather}
Here we identify a partition $\lambda$ with a sequence
$\big(\lambda_1,\dots,\lambda_{\ell(\lambda)},0,0,\dots\big)$.

Let us recall the following lemma.
\begin{Lemma}
\label{lem_Young2}
For $\lambda,\mu \in \Y$, we have
\begin{itemize}\itemsep=0pt

\item[$(1)$] $N_{\lambda, \mu} (1) \neq 0$ if and only if $\lambda=\mu$,
\item[$(2)$] $N_{\lambda, \mu} (q^{-1}) \neq 0 $ if and only if
$\lambda=r_n(\mu)$, $n=0,1,2,\dots$,
\item[$(3)$] $N_{\lambda, \mu} (u)=N_{\mu',\lambda'} (u)$.
\end{itemize}
\end{Lemma}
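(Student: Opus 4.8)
My plan is to derive all three statements from a single elementary principle: since $|q|<1$, the number $q$ is not a root of unity, so a factor $1-q^{e}u$ with $e\in\Z$ vanishes at the specialization $u=q^{-s}$ (with $s\in\Z$) exactly when $e=s$. Reading the exponents off the definition of $N_{\lambda,\mu}(u)$, a factor of the first product vanishes at $u=q^{-s}$ iff some $\square\in\lambda$ satisfies $\ell_\lambda(\square)+a_\mu(\square)=-1-s$, and a factor of the second product vanishes iff some $\square\in\mu$ satisfies $\ell_\mu(\square)+a_\lambda(\square)=s-1$. Statement $(1)$ is the case $s=0$ and statement $(2)$ the case $s=1$, so for each I must characterize the pairs $(\lambda,\mu)$ admitting \emph{no} such box. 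For $(1)$ the forbidden conditions are $\ell_\lambda(\square)+a_\mu(\square)=-1$ on $\lambda$ and $\ell_\mu(\square)+a_\lambda(\square)=-1$ on $\mu$; the direction $\lambda=\mu\Rightarrow N_{\lambda,\mu}(1)\neq0$ is immediate because $\ell_\lambda(\square)+a_\lambda(\square)\ge0$ for every $\square\in\lambda$, so the value $-1$ cannot occur in either product. For the converse I would assume $\lambda\neq\mu$, pick a row (equivalently a column) in which the two diagrams first differ, and track the arm and leg lengths along the relevant part of the boundary to exhibit a box realizing $\ell_\lambda(\square)+a_\mu(\square)=-1$ (or the dual equation in $\mu$); producing this box is the main combinatorial point of $(1)$.

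For $(2)$, the same principle with $s=1$ makes the forbidden conditions $\ell_\lambda(\square)+a_\mu(\square)=-2$ on $\lambda$ and $\ell_\mu(\square)+a_\lambda(\square)=0$ on $\mu$, and the goal becomes to show that no such box exists iff $\lambda=r_n(\mu)$ for some $n\ge0$, with $r_n$ as in \eqref{rn}. I would translate the two arm/leg equations into inequalities relating the parts $\lambda_i$ and $\mu_i$ and then match these against the explicit shape of $r_n(\mu)$, which adds one box to each of the first $n$ rows and deletes the $(n{+}1)$-st row. As a guide and a consistency check I would first specialize $\mu=\varnothing$: there $r_n(\varnothing)=(1^n)$, so $(2)$ must reduce to Lemma~\ref{lem_Young1}$(1)$, and the general argument should degenerate to that known case. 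The delicate step is again the precise bookkeeping turning the absence of a vanishing box into the exact description $\lambda=r_n(\mu)$ rather than some weaker containment.

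Part $(3)$ is an identity rather than a vanishing statement, so here I would instead use the conjugation symmetry of arm and leg, $a_{\nu'}(\square^{\mathrm T})=\ell_\nu(\square)$ and $\ell_{\nu'}(\square^{\mathrm T})=a_\nu(\square)$ for the transposed box $\square^{\mathrm T}$. Substituting these into $N_{\mu',\lambda'}(u)$ and reindexing each of its two products by transposition turns it into $\prod_{\square\in\mu}\big(1-q^{-\ell_\lambda(\square)-a_\mu(\square)-1}u\big)\prod_{\square\in\lambda}\big(1-q^{\ell_\mu(\square)+a_\lambda(\square)+1}u\big)$, which is exactly $N_{\lambda,\mu}(u)$ with the two product ranges $\lambda$ and $\mu$ interchanged. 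Thus $(3)$ is equivalent to the assertion that this interchange does not change the product; cancelling the factors indexed by the common boxes $\lambda\cap\mu$, it reduces to $\prod_{\square\in\lambda\setminus\mu}h(\square)=\prod_{\square\in\mu\setminus\lambda}h(\square)$, where $h(\square)=(1-q^{-\ell_\lambda(\square)-a_\mu(\square)-1}u)/(1-q^{\ell_\mu(\square)+a_\lambda(\square)+1}u)$. I expect to prove this last equality by showing the two sides share the same multiset of exponents, read off from the boundary lattice paths (Maya diagrams) of $\lambda$ and $\mu$; this multiset cancellation is the main obstacle in $(3)$. Finally I note that once $(3)$ is available, the symmetry $N_{\lambda,\mu}=N_{\mu',\lambda'}$ it provides can be used to halve the casework in $(1)$ and $(2)$.
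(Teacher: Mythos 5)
Your reformulations are all sound: the vanishing criterion (a factor $1-q^{e}u$ vanishes at $u=q^{-s}$ iff $e=s$, which uses $|q|<1$), the resulting characterizations of when $N_{\lambda,\mu}(1)$ and $N_{\lambda,\mu}\big(q^{-1}\big)$ vanish, the transposition identities $a_{\nu'}\big(\square^{\mathrm T}\big)=\ell_\nu(\square)$, $\ell_{\nu'}\big(\square^{\mathrm T}\big)=a_\nu(\square)$ (which indeed hold for arbitrary boxes, as needed here), and the reduction of $(3)$ to the statement that interchanging the two index sets $\lambda$ and $\mu$ leaves the product unchanged. The consistency check of $(2)$ against Lemma~\ref{lem_Young1}$(1)$ is also apt. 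For comparison, the paper does not prove this lemma at all: it declares $(1)$ immediate from the definition and cites $(2)$, $(3)$ as Lemmas~A.2 and~A.3 of~\cite{Jimbo-Nagoya-Sakai}. So you are attempting something strictly more self-contained than what the paper records; if completed, that would be a legitimate alternative.

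The genuine gap is that every nontrivial direction is deferred rather than executed, and in $(1)$ the one concrete hint you give points the wrong way. Taking the row where $\lambda$ and $\mu$ \emph{first} differ does not produce a vanishing box: for $\lambda=(2,2)$, $\mu=(1,1)$ the first differing row is $i=1$, and the natural box $(1,\mu_1+1)=(1,2)$ has $\ell_\lambda(\square)+a_\mu(\square)=1-1=0\neq-1$. The correct choice is the \emph{largest} $i$ with $\lambda_i>\mu_i$ (after reducing, by symmetry of the two products, to the case where such an $i$ exists): put $\square=(i,\mu_i+1)\in\lambda$; then $a_\mu(\square)=-1$, and for every $k>i$ one has $\lambda_k\le\mu_k\le\mu_i$, so no row below $i$ reaches column $\mu_i+1$, whence $\lambda'_{\mu_i+1}=i$, $\ell_\lambda(\square)=0$, and the corresponding factor vanishes at $u=1$. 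For $(2)$ and $(3)$ you offer nothing beyond the (correct) reformulation: ``match the inequalities against the shape of $r_n(\mu)$'' and ``show the two sides share the same multiset of exponents via boundary paths'' are restatements of the difficulty, not arguments --- they are precisely the content of Lemmas~A.2 and~A.3 of~\cite{Jimbo-Nagoya-Sakai}, each of which requires genuine induction/bookkeeping on diagram profiles. As it stands, the proposal is a viable plan with one misdirected step, not a proof.
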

{\sloppy\noindent
Lemma~\ref{lem_Young2}(1) is easily obtained from the definition of the Nekrasov factor and Lem\-ma~\ref{lem_Young2}(2),~(3) were proved in~\cite{Jimbo-Nagoya-Sakai} as Lemmas A.2 and A.3, respectively.

}

Suppose that $\theta_3=1/N$, $\sigma_2=\sigma_3+h_i$
in~\eqref{coefficient_4pt_qCB}, then the following holds:
\begin{gather*}
w_{i, i}=q^{\sigma_3^{(i)}-1/N-\sigma_2^{(i)}}=q^{-1}, \qquad
w_{k, k}=q^{\sigma_3^{(k)}-1/N-\sigma_2^{(k)}}=1, \qquad k \neq i.
\end{gather*}
Hence by Lemma~\ref{lem_Young2}(1), (2), we obtain
\begin{gather*}
\mu_i=r_n(\nu_i),\qquad \mu_k=\nu_k, \qquad k\neq i.
\end{gather*}
Similarly, suppose that $\theta_2=1/N$, $\sigma_2=\sigma_1-h_i$
in~\eqref{coefficient_4pt_qCB}, then the following holds:
\begin{gather*}
z_{i, i}=q^{\sigma_2^{(i)}-1/N-\sigma_1^{(i)}}=q^{-1}, \qquad
z_{k, k}=q^{\sigma_2^{(k)}-1/N-\sigma_1^{(k)}}=1, \qquad k \neq i.
\end{gather*}
Hence by Lemma~\ref{lem_Young2}(1), (2), we obtain
\begin{gather*}
\mu_i=r_n(\lambda_i),\qquad \mu_k=\lambda_k, \qquad k\neq i.
\end{gather*}

We also recall the following lemma.
\begin{Lemma}[{\cite[Lemma A.4]{Jimbo-Nagoya-Sakai}}]\label{lem-N-factors}
Let $\lambda,\mu \in \Y$ and $n\in\Z_{\ge0}$.
Using the notation~\eqref{lambda-bar} and~\eqref{rn}, we set
\begin{gather*}
\ell=\ell(\lambda),\qquad
k=\ell(\mu),\qquad
\eta=r_n(\lambda), \qquad
\gamma=(r_n(\mu'))',
\\
\tilde{\eta}=
\begin{cases}
\bar{\eta}, & n\le \ell-1,\\
\big(\lambda,1^{n-\ell+1}\big), & n\ge \ell.
\end{cases}
\end{gather*}
Then we have
\begin{gather*}
\frac{N_{\eta,\lambda}(q^{-1})}{N_{\eta,\eta}(1)}=
\frac{N_{\tilde{\eta},\bar{\lambda}}(q^{-1})}
{N_{\tilde{\eta},\tilde{\eta}}(1)}
\big(1-q^{|\tilde{\eta}|-|\lambda|}\big),
\\
N_{\mu,\eta}(u)=
N_{\mu,\tilde{\eta}}(q^{-1}u)
\prod_{j=1}^{\mu_\ell}
\frac{1-q^{j-1}u}{1-q^{-\ell_\mu(\ell,j)+j-2}u}
\prod_{i=1}^{\ell-1}\big(1-q^{\ell-i+a_\mu(i,1)}u\big),
\\
N_{\mu,\lambda}(u)
=N_{\mu,\bar{\lambda}}(q^{-1}u)
\prod_{j=1}^{\mu_{\ell+1}}
\frac{1-q^{j-1}u}{1-q^{-\ell_\mu(\ell+1,j)+j-2}u}
\prod_{i=1}^\ell\big(1-q^{\ell-i+a_\mu(i,1)+1}u\big),
\\
\frac{N_{\mu,\lambda}(u)}{N_{\mu,\eta}(qu)}
=\frac{N_{\mu,\bar{\lambda}}(q^{-1}u)}{N_{\mu,\tilde{\eta}}(u)}(1-u),
\\
\frac{N_{\mu,\lambda}(u)}{N_{\mu,\eta}(qu)}
=\frac{N_{\bar{\mu},\lambda}(qu)}{N_{\bar{\mu},\eta}(q^2u)}
\frac{1-q^{|\eta|-|\lambda|+1-k}u}{1-qu},
\\
N_{\gamma,\lambda}(u)=
N_{\gamma,\bar{\lambda}}(q^{-1}u)
\big(1-q^{\ell+|\gamma|-|\mu|-1}u\big)
\prod_{j=1}^{\mu_\ell}
\frac{1-q^{j-1}u}{1-q^{-\ell_\mu(\ell,j)+j-2}u}
\prod_{i=1}^{\ell-1}\big(1-q^{\ell-i+a_\mu(i,1)}u\big).
\end{gather*}
\end{Lemma}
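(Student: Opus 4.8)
The plan is to prove all of these identities by one mechanism: expand both sides using the defining product of the Nekrasov factor and track, box by box, how the arm length $a_\bullet(\square)$ and leg length $\ell_\bullet(\square)$ transform under the operations $\bar{\cdot}$, $r_n$, $\tilde{\cdot}$ and conjugation. In every case the boxes split into a \emph{bulk}, on which the two sides agree identically once the shift $u\mapsto q^{\pm1}u$ is absorbed into an arm that has moved by one, and a \emph{boundary} consisting of one deleted row or column together with the boxes lying beyond the partition against which the arm is measured; the leftover boundary factors then telescope into the explicit products on the right. The fundamental computation is the column-removal identity (the third one); the $r_n$-reduction (the second one) follows from it, and every remaining identity is assembled from these two together with the conjugation symmetry $N_{\lambda,\mu}(u)=N_{\mu',\lambda'}(u)$ of Lemma~\ref{lem_Young2}(3).

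First I would carry out the prototype. Writing $\bar\lambda$ for $\lambda$ with its first column deleted, a box $(i,j)\in\lambda$ with $j\ge2$ becomes $(i,j-1)\in\bar\lambda$ with $a_{\bar\lambda}(i,j-1)=a_\lambda(i,j)$ and $\ell_{\bar\lambda}(i,j-1)=\ell_\lambda(i,j)$, while for a box of the fixed partition $\mu$ the arm measured against $\lambda$ drops by one in rows $i\le\ell=\ell(\lambda)$ and is unchanged for $i>\ell$. Substituting this into $N_{\mu,\lambda}(u)$ and $N_{\mu,\bar\lambda}(q^{-1}u)$, the product over $\square\in\lambda$ matches in every column $j\ge2$, the deleted first column contributes exactly $\prod_{i=1}^{\ell}(1-q^{\ell-i+a_\mu(i,1)+1}u)$ via $\ell_\lambda(i,1)=\ell-i$, and the product over $\square\in\mu$ matches in rows $i\le\ell$; in the remaining rows, fixing a column $j\le\mu_{\ell+1}$ and letting $i$ run from $\ell+1$ to $\mu'_j$ makes the ratio of old to new factors telescope to $(1-q^{j-1}u)/(1-q^{-\ell_\mu(\ell+1,j)+j-2}u)$, whose product over $j$ is the remaining factor. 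The $r_n$-identity is then the same computation applied to $\eta=r_n(\lambda)$: for $n\le\ell-1$ one has $\ell(\eta)=\ell-1$ and $\tilde\eta=\bar\eta$, so it is literally the column-removal identity for $\eta$; for $n\ge\ell$ one has $\bar\eta=\lambda$ but $\tilde\eta=(\lambda,1^{n-\ell+1})$, and the same bulk/boundary split runs with the appended unit rows supplying the boundary factors.

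The rest is bookkeeping on top of these two. The fourth identity is the quotient of the third by the second evaluated at $qu$: the explicit products collapse, the telescoping ratio $\prod_{j=1}^{\mu_\ell}(1-q^{j-1}u)/(1-q^{j}u)=(1-u)/(1-q^{\mu_\ell}u)$ appears, and it cancels against the leftover $1-q^{\mu_\ell}u$ to leave exactly $1-u$. The fifth identity is the first-index analogue of the third --- the same bulk/boundary analysis applied to the slot $\mu$ under column removal $\mu\to\bar\mu$, with $k=\ell(\mu)$ in the role of $\ell$ --- specialised to the ratio $N_{\mu,\lambda}(u)/N_{\mu,\eta}(qu)$, in which almost all boundary factors cancel between $\lambda$ and $\eta=r_n(\lambda)$ and only $(1-q^{|\eta|-|\lambda|+1-k}u)/(1-qu)$ survives. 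The sixth identity is the third applied with $\gamma=(r_n(\mu'))'$ in the first slot; its boundary factors come out in terms of $\gamma$ and must be rewritten as the stated $\mu$-data using how $r_n$ on the conjugate side moves arms and legs, with the change in total box number $|\gamma|-|\mu|$ surfacing in the extra factor $1-q^{\ell+|\gamma|-|\mu|-1}u$. Finally the first identity reduces both slots at once, combining a second-index reduction of $N_{\eta,\lambda}(q^{-1})$ with the first-index reductions of $N_{\eta,\lambda}(q^{-1})$ and of the normalisation $N_{\eta,\eta}(1)$ obtained by conjugation, the exponent $|\tilde\eta|-|\lambda|$ being read off from the definitions of $r_n$ and $\tilde\eta$.

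The main obstacle is not conceptual but the boundary bookkeeping: one has to check that the leftover factors in the rows $i\ge\ell+1$ (and, after conjugation, in the corresponding columns) telescope \emph{exactly} to the stated products, and that the piecewise definition of $\tilde\eta$ and the twisted operation $\gamma=(r_n(\mu'))'$ are precisely what make these collapses occur. Keeping the arm and leg shifts and their accompanying powers of $q$ consistent across all the cases --- especially in the ratio identities, where errors can cancel misleadingly --- is where the real care lies.
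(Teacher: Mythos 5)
The first thing to note is that the paper contains no proof of this statement at all: the lemma is imported verbatim from \cite[Lemma~A.4]{Jimbo-Nagoya-Sakai} (hence the citation in its header), and the text proceeds immediately to use it. So there is no in-paper argument to compare against; the actual proof lives in that reference, and it is carried out by exactly the mechanism you propose --- direct manipulation of the defining product of the Nekrasov factor. Your strategy is therefore the right one, and the parts you actually execute are correct. I checked your prototype (the third identity): the pairing $(i,j)\in\lambda\leftrightarrow(i,j-1)\in\bar\lambda$ absorbs the shift $u\mapsto q^{-1}u$ because $a_\mu(i,j-1)=a_\mu(i,j)+1$ while the legs are unchanged; the deleted first column contributes $\prod_{i=1}^\ell\big(1-q^{\ell-i+a_\mu(i,1)+1}u\big)$ via $\ell_\lambda(i,1)=\ell-i$; and the $\mu$-boxes in rows $i>\ell$ telescope column-wise to the stated ratio since $\ell_\mu(\ell+1,j)=\mu'_j-\ell-1$. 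Your derivation of the second identity for $n\le\ell-1$ (where $\ell(\eta)=\ell-1$ and $\tilde\eta=\bar\eta$, so it really is column removal applied to $\eta$) and of the fourth identity as the quotient of the third by the second at $qu$ are also correct, though in the latter the numerator and denominator products run over different ranges ($\mu_{\ell+1}$ versus $\mu_\ell$) and only combine into your telescoping product because $-\ell_\mu(\ell+1,j)+j-2=-\ell_\mu(\ell,j)+j-1$, a point your write-up glosses over.

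The genuine weakness is in the cases you dispatch with ``the same computation runs.'' For $n\ge\ell$ the second identity is \emph{not} the column-removal identity applied to $\eta$: there $\bar\eta=\lambda\neq\tilde\eta=\big(\lambda,1^{n-\ell+1}\big)$, and the correct bulk pairing matches column $j\ge 3$ of $\eta$ with column $j-1$ of $\tilde\eta$, leaving \emph{two} columns of $\eta$ (heights $n$ and $\ell$) against \emph{one} column of $\tilde\eta$ (height $n+1$) --- a different boundary configuration from the prototype, requiring its own bookkeeping. Likewise the fifth identity needs the first-slot reduction $\mu\to\bar\mu$ written out and then a cancellation between the $\lambda$- and $\eta$-boundary data that depends on the relative positions of $n$, $k$, $\ell$; and the sixth needs the $\gamma$-data produced by the third identity translated into $\mu$-data (using, e.g., $|\gamma|-|\mu|=n-\mu'_{n+1}$), which is where the extra factor $1-q^{\ell+|\gamma|-|\mu|-1}u$ comes from. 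None of these steps would fail --- the identities are true and your mechanism handles them --- but as written they are assertions, and they sit precisely at the spots you yourself flag as the ones ``where errors can cancel misleadingly.'' So: right approach, correct where executed, but a plan plus one worked case rather than a complete proof.
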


We denote by $T_{q,z}$, the $q$-shift operator with respect to the variable $z$, namely, $T_{q,z}(f(z))=f(qz)$
for a function $f(z)$. For a partition $\lambda=(\lambda_1,\lambda_2,\dots)$, we define
\begin{gather*}
	\lambda+1^m=(\lambda_1+1,\lambda_2+1,\dots,\lambda_m+1,\lambda_{m+1},\dots).
\end{gather*}

\begin{Theorem}\label{thm_contiguity_relations_S}
For $i,j$ $(1\leq i,j\leq N)$, let $m\geq \ell(\lambda_j)$, and
\begin{gather*}
\hat\bla=\big(\lambda_1,\dots,\lambda_j+1^m,\dots,\lambda_N\big),
\\
A_{\bla,\bnu}^{(j)}(\theta_2,\sigma_1,\sigma_3)=
\prod_{k=1}^N\frac{N_{\nu_k,\hat\lambda_j}
\big(q^{\sigma_3^{(k)}-\theta_2-\sigma_1^{(j)}+1-1/N}\big)}
{N_{\nu_k,\lambda_j}\big(q^{\sigma_3^{(k)}-\theta_2-\sigma_1^{(j)}-1/N}\big)},
\end{gather*}
we have
\begin{gather}
S_{\hat\bla, \bnu}\left({
\frac{1}{N}\phantom{000}\theta_2-\frac{1}{N}
\atop
\sigma_3\phantom{00}\sigma_3+h_i\phantom{00}\sigma_1-h_j
};\,x_2,x_3
\right)\nonumber
\\ \qquad
{}=q^{m-|\bnu| +1-1/N+\sigma_3^{(i)}-\theta_2-\sigma_1^{(j)}}A_{\bla,\bnu}^{(j)}(\theta_2,\sigma_1,\sigma_3)
\frac{1}
{1-q^{\sigma_3^{(i)}-\theta_2-\sigma_1^{(j)}+1-1/N}}\nonumber
\\ \qquad\phantom{=}
{}\times
\big(q^{|\bnu|-m-1+1/N-\sigma_3^{(i)}+\theta_2+\sigma_1^{(j)}}T_{q,x_2}-1\big)S_{\bla, \bnu}\left({
\frac{1}{N}\phantom{000}\theta_2
\atop
\sigma_3\phantom{00}\sigma_3+h_i\phantom{00}\sigma_1
};\,x_2,x_3
\right)\!,\label{eq_S1}
\\
S_{\hat\bla, \bnu}\left({
\theta_2-\frac{1}{N}\phantom{000}\frac{1}{N}
\atop
\sigma_3\phantom{00}\sigma_1-h_i-h_j\phantom{00}\sigma_1-h_j
};\,x_3,x_2
\right)=\left(\frac{qx_2}{x_3}\right)^m
A_{\bla,\bnu}^{(j)}(\theta_2,\sigma_1,\sigma_3)\nonumber
\\ \qquad\phantom{=}
{}\times
\frac{1-q^{-m-|\bla|+\sigma_1^{(j)}-\sigma_1^{(i)}}T_{q,x_2}}{1-q^{\sigma_1^{(j)}-\sigma_1^{(i)}}}S_{\bla, \bnu}\left({
\theta_2\phantom{000}\frac{1}{N}
\atop
\sigma_3\phantom{00}\sigma_1-h_i\phantom{00}\sigma_1
};\,x_3,x_2
\right),\qquad i\neq j, \label{eq_S2}
\end{gather}
\begin{gather}
S_{\hat\bla, \bnu}\left({
\theta_2-\frac{1}{N}\phantom{000}\frac{1}{N}
\atop
\sigma_3\phantom{00}\sigma_1-2h_i\phantom{00}\sigma_1-h_i
};\,x_3,x_2
\right)\nonumber
\\ \qquad
{}=\bigg(\frac{qx_2}{x_3}\bigg)^{m-1}
A_{\bla,\bnu}^{(j)}(\theta_2,\sigma_1,\sigma_3)\prod_{k=1}^N
\frac{1-q^{\sigma_1^{(k)}-\sigma_1^{(i)}+1}}
{1-q^{\sigma_3^{(k)}-\theta_2-\sigma_1^{(j)}+1-1/N}}
 \frac{1-q^{-m-|\bla|}T_{q,x_2}}{1-q}\nonumber
 \\ \qquad\phantom{=}
{} \times S_{\bla, \bnu}\left({
\theta_2\phantom{000}\frac{1}{N}
\atop
\sigma_3\phantom{00}\sigma_1-h_i\phantom{00}\sigma_1
};\,x_3,x_2
\right)\!.\label{eq_S3}
\end{gather}
\end{Theorem}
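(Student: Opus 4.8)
The plan is to use the degeneration built into each of \eqref{eq_S1}--\eqref{eq_S3} --- namely $\theta_3=1/N$ together with $\sigma_2=\sigma_3+h_i$ in \eqref{eq_S1}, and $\theta_2=1/N$ together with $\sigma_2=\sigma_1-h_i$ in \eqref{eq_S2} and \eqref{eq_S3} --- to collapse the $N$-fold sum over $\bmu\in\Y^N$ defining $S_{\bla,\bnu}$ in \eqref{coefficient_4pt_qCB} to a single sum over one nonnegative integer, and then to verify each identity coefficient by coefficient by means of the Nekrasov-factor identities collected in Lemma~\ref{lem-N-factors}. This is the $N$-variable generalization of the argument of~\cite{Jimbo-Nagoya-Sakai}.

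First I would perform the collapse, exactly as in the discussion preceding the theorem. With $\theta_3=1/N$ and $\sigma_2=\sigma_3+h_i$ the diagonal arguments satisfy $w_{i,i}=q^{-1}$ and $w_{k,k}=1$ for $k\neq i$, so Lemma~\ref{lem_Young2}(1),(2) forces every surviving term to have $\mu_k$ equal to its neighbouring partition for $k\neq i$ and $\mu_i=r_n(\cdot)$ for some $n\ge 0$; the same reasoning applied to the $z$-factors governs \eqref{eq_S2} and \eqref{eq_S3}. After this reduction each side of the asserted identity is an explicit one-dimensional series in $q^{N\theta_2}x_2/x_3$ whose $n$-th coefficient is a product over $k$ of the off-diagonal $w$- and $z$-factors divided by the $u$-factors, all evaluated at the pinned $\bmu$.

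Next I would compare the two single sums term by term. Since $T_{q,x_2}$ multiplies the term of total size $|\bmu|$ by a fixed power $q^{\pm|\bmu|}$, each operator $\bigl(c\,T_{q,x_2}-1\bigr)$ on the right-hand side acts on the $n$-th coefficient as the scalar $\bigl(c\,q^{\pm|\bmu|}-1\bigr)$; the task is therefore to show that the ratio of the $n$-th coefficient of the left-hand side (where $\bla\to\hat\bla$, $\theta_2\to\theta_2-1/N$, and one channel momentum is shifted by a weight $h_j$) to that of $S_{\bla,\bnu}$ equals the stated rational prefactor times $\bigl(c\,q^{\pm|\bmu|}-1\bigr)$. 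Replacing $\lambda_j$ by $\hat\lambda_j=\lambda_j+1^m$ and shifting $\theta_2,\sigma_1$ scales the arguments of the Nekrasov factors by powers of $q$ and replaces the relevant partitions by their images under the operations \eqref{lambda-bar} and \eqref{rn}; this is precisely the situation treated by Lemma~\ref{lem-N-factors}, each of whose identities rewrites a single modified factor as the original one times an elementary $q$-rational function. The overall partition change $\bla\to\hat\bla$, being independent of the summation index, should assemble into the factor $A^{(j)}_{\bla,\bnu}(\theta_2,\sigma_1,\sigma_3)$, while the $(1-u)$- and $(1-q^{\cdots}u)$-type terms supplied by Lemma~\ref{lem-N-factors} should recombine into the binomial carried by $T_{q,x_2}$.

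The main obstacle I expect lies in this last assembly: one must track, simultaneously for every index $k$, how the box-adding map $r_n$ on $\mu_i$ and the column-adding shift $\hat\lambda_j=\lambda_j+1^m$ propagate through the numerator $w$- and $z$-factors and the denominator $u$-factors, apply the correct one of the several identities of Lemma~\ref{lem-N-factors} to each, and check that the arm- and leg-length-dependent products appearing there telescope or cancel so that only the closed-form rational functions of the statement survive. The cases $i\neq j$ of \eqref{eq_S2} and $i=j$ of \eqref{eq_S3}, and the interchange of the roles of $\theta_2,\theta_3$ between the $w$- and $z$-degenerations, require running this computation in a few slightly different configurations, but the underlying mechanism is the same in each.
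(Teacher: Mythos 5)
Your proposal takes essentially the same route as the paper: the paper's own proof of Theorem~\ref{thm_contiguity_relations_S} consists of the single remark that the three identities can be verified ``by direct computations using Lemma~\ref{lem-N-factors}'', with the calculation omitted, and the collapse of the $\bmu$-sum under the degenerate specializations is exactly the discussion the paper places immediately before the theorem. Your outline---pinning $\bmu$ via Lemma~\ref{lem_Young2}, comparing coefficients of the resulting one-dimensional series (on which $T_{q,x_2}$ acts as a scalar), and assembling the prefactor $A^{(j)}_{\bla,\bnu}$ and the $T_{q,x_2}$-binomial from the identities of Lemma~\ref{lem-N-factors}---is precisely that omitted computation, described at a level of detail comparable to, indeed greater than, the paper's.
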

\begin{proof}
We can verify the equations~\eqref{eq_S1},~\eqref{eq_S2},
and~\eqref{eq_S3} by direct computations using Lemma
\ref{lem-N-factors}. We~omit the calculation.
\end{proof}

We note that if $\bla=\bnu=(\varnothing,\dots,\varnothing)$,
then we can take $m=0$ in Theorem~\ref{thm_contiguity_relations_S} and then
$\hat \bla=(\varnothing,\dots,\varnothing)$. In~that case,
the equations~\eqref{eq_S1},~\eqref{eq_S2},
and~\eqref{eq_S3} are equal to~the contiguity relations for
the $q$-hypergeometric series. Hence, we may regard
the contiguity relations~\eqref{eq_S1},~\eqref{eq_S2},
and~\eqref{eq_S3} as a generalization of the
contiguity relation of the $q$-hy\-per\-geometric series.

Let us denote the coefficient of~$x_1^{|\bla|}x_4^{-|\bnu|}$ in the $6$-point $q$-conformal block by
\begin{gather*}
\F_{\bla, \bnu}\left({
\theta_3\phantom{000}\theta_2
\atop
\sigma_3\phantom{00}\sigma_2\phantom{00}\sigma_1
};\,x_3,x_2
\right)
\\ \qquad
{}=\prod_{p=2}^3\N\Bigl({\kern-1pt\theta_p\atop{\sigma_{p} \phantom{h_2} \sigma_{p-1}}}
\Bigr)q^{N\theta_p\Delta_{\sigma_{p}}}
x_p^{\Delta_{\sigma_{p}}-\Delta_{N\theta_p h_1}-\Delta_{\sigma_{p-1}}}
 x_3^{|\bnu|}x_2^{-|\bla|}
S_{\bla, \bnu}\left({
\theta_3\phantom{000}\theta_2
\atop
\sigma_3\phantom{00}\sigma_2\phantom{00}\sigma_1
};\,x_3,x_2
\right)\!.
\end{gather*}
We note that if $\bla=\bnu=(\varnothing,\dots,\varnothing)$, then it is a 4-point $q$-conformal block, namely
\begin{gather*}
\F_{\varnothing, \varnothing}\left({
\theta_3\phantom{000}\theta_2
\atop
\sigma_3\phantom{00}\sigma_2\phantom{00}\sigma_1
};\,x_3,x_2
\right)
=\F\left({
\theta_3\phantom{000}\theta_2
\atop
\sigma_3\phantom{00}\sigma_2\phantom{00}\sigma_1
};\,x_3,x_2
\right)\!.
\end{gather*}

\begin{Corollary}\label{cor_contiguity_relation_F}
We have
\begin{gather*}
\F_{\hat\bla, \bnu}\left({
\frac{1}{N}\phantom{000}\theta_2-\frac{1}{N}
\atop
\sigma_3\phantom{00}\sigma_3+h_i\phantom{00}\sigma_1-h_j
};\,x_2,x_3
\right)
\\ \qquad
{}=C
\big(q^{-m+\theta_2+\sigma_1^{(j)}}T_{q,x_2}-1\big)\F_{\bla, \bnu}\left({
\frac{1}{N}\phantom{000}\theta_2
\atop
\sigma_3\phantom{00}\sigma_3+h_i\phantom{00}\sigma_1
};\,x_2,x_3
\right)\!,
\\
 \F_{\hat\bla, \bnu}\left({
\theta_2-\frac{1}{N}\phantom{000}\frac{1}{N}
\atop
\sigma_3\phantom{00}\sigma_1-h_i-h_j\phantom{00}\sigma_1-h_j
};\,x_3,x_2
\right)
\\ \qquad
{}=C q^{\theta_2}\left(\frac{x_3}{qx_2}\right)^{1/N}
\big(1-q^{-m+\sigma_1^{(j)}}T_{q,x_2}\big)\F_{\bla, \bnu}\left({
\theta_2\phantom{000}\frac{1}{N}
\atop
\sigma_3\phantom{00}\sigma_1-h_i\phantom{00}\sigma_1
};\,x_3,x_2
\right)\!,
\end{gather*}
where
\begin{align*}
C= q^{m-|\bnu|-\Delta_{\sigma_3} +(1-1/N)/2-\theta_2-\sigma_1^{(j)}}
x_3^{-m+(N\theta_2-1)(1-1/N)+\sigma_1^{(j)}}
A_{\bla,\bnu}^{(j)}(\theta_2,\sigma_1,\sigma_3).
\end{align*}
\end{Corollary}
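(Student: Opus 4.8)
The plan is to deduce Corollary~\ref{cor_contiguity_relation_F} from Theorem~\ref{thm_contiguity_relations_S} by dressing the $S$-level identities with the scalar and monomial prefactors that convert $S_{\bla,\bnu}$ into $\F_{\bla,\bnu}$. Write the defining relation compactly as $\F_{\bla,\bnu}=P_{\bla}\,S_{\bla,\bnu}$, where $P_{\bla}$ is the product of the normalization factors $\N$, the powers $q^{N\theta_p\Delta_{\sigma_p}}$, the monomials $x_p^{\Delta_{\sigma_p}-\Delta_{N\theta_p h_1}-\Delta_{\sigma_{p-1}}}$, and the factor $x_3^{|\bnu|}x_2^{-|\bla|}$. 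First I would rewrite both sides of each asserted identity in the form $P\cdot S$: on the left-hand sides $P$ and $S$ carry $\hat\bla$ together with the shifted arguments ($\theta_2\mapsto\theta_2-1/N$, $\sigma_1\mapsto\sigma_1-h_j$, and the appropriate shift of $\sigma_2$), whereas on the right-hand sides they carry $\bla$ and the original arguments.

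Next I would substitute \eqref{eq_S1} for the first identity, and \eqref{eq_S2} (with the case $i=j$ supplied by \eqref{eq_S3}) for the second, to replace the shifted $S_{\hat\bla,\bnu}$ by the first-order $q$-difference operator $(\,\cdot\,T_{q,x_2}-1)$ applied to the unshifted $S_{\bla,\bnu}$. The one genuinely noncommutative point is that $T_{q,x_2}$ does not commute with the $x_2$-dependent part of $P_{\bla}$; since that part is a single power $x_2^{c}$ (all the $\N$- and $q$-factors and the $x_3$-powers are $x_2$-independent and pass through freely), I would use the conjugation rule $(\alpha\,T_{q,x_2}-1)x_2^{-c}=x_2^{-c}(q^{-c}\alpha\,T_{q,x_2}-1)$ to slide the shift operator out of the prefactor onto $\F_{\bla,\bnu}$ directly. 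The $q^{-c}$ produced here, combined with the bare $q$-exponent multiplying $T_{q,x_2}$ in Theorem~\ref{thm_contiguity_relations_S}, is what must reproduce the shifts $q^{-m+\theta_2+\sigma_1^{(j)}}$ and $q^{-m+\sigma_1^{(j)}}$ recorded in the corollary.

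The remaining work is purely scalar: assembling the ratio of the shifted prefactor to the original one and multiplying it by the explicit prefactor of Theorem~\ref{thm_contiguity_relations_S} (the $q^{m-|\bnu|+\cdots}$, the factor $A_{\bla,\bnu}^{(j)}$, and the denominator $1/(1-q^{\cdots})$), then checking that the product collapses to the single constant $C$ and, in the second identity, to the overall monomial $q^{\theta_2}(x_3/qx_2)^{1/N}$. Here $|\hat\bla|=|\bla|+m$ governs the change in the $x_2^{-|\bla|}$ factor, the quadratic variations of the exponents $\Delta_{\sigma}$ under $\sigma_1\mapsto\sigma_1-h_j$ and $\theta_2\mapsto\theta_2-1/N$ produce the surviving powers of $x_2$ and $x_3$, and the ratio of $\N$-factors is simplified through $G_q(u+1)=\Gamma_q(u)G_q(u)$, the degenerate value being read off via the regularized $\N'$ defined before Proposition~\ref{prop_qHGS}. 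I expect the main obstacle to be exactly this bookkeeping — in particular verifying, for the $i=j$ branch, that the extra products appearing in \eqref{eq_S3} are cancelled by the correspondingly modified prefactor ratio, so that the same constant $C$ is obtained as in the $i\neq j$ branch.
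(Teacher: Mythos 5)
Your proposal is correct and is exactly the derivation the paper intends: the paper states Corollary~\ref{cor_contiguity_relation_F} without proof, as an immediate consequence of Theorem~\ref{thm_contiguity_relations_S} and the definition $\F_{\bla,\bnu}=P_{\bla}S_{\bla,\bnu}$, which is precisely your prefactor-dressing argument (including the conjugation rule $(\alpha T_{q,x_2}-1)x_2^{-c}=x_2^{-c}\big(q^{-c}\alpha T_{q,x_2}-1\big)$ that converts the $q$-shift coefficients of~\eqref{eq_S1}--\eqref{eq_S3} into $q^{-m+\theta_2+\sigma_1^{(j)}}$ and $q^{-m+\sigma_1^{(j)}}$). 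Your handling of the $i=j$ branch via~\eqref{eq_S3}, with the extra products absorbed into the modified ratio of normalization factors, is likewise the intended bookkeeping.
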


Corollary~\ref{cor_contiguity_relation_F} enable us to reduce the connection problem
of degenerate $\F_{\hat \bla, \bnu}(x_2)$ to the connection problem of degenerate $\F_{\bla, \bnu}(x_2)$. Firstly, we let $C
\big(q^{-m+\theta_2+\sigma_1^{(1)}}T_{q,x_2}-1\big)$ act on the connection formula~\eqref{eq_connection_problem_deg_4pt} of the degenerate 4-point $q$-conformal block. Then we have the connection formula
\begin{gather*}
\F_{\hat\bla, \varnothing}\left({
\frac{1}{N}\phantom{000}\theta_2-\frac{1}{N}
\atop
\sigma_3\phantom{00}\sigma_3+h_i\phantom{00}\sigma_1-h_1
};\,x_2,x_3
\right)
=-\sum_{j=1}^N\mathcal{F}_{\hat\bla,\varnothing}\left({
\theta_2-\frac{1}{N}\phantom{0000}\frac{1}{N}
\atop
\sigma_3\phantom{00}\sigma_1-h_j-h_1\phantom{00}\sigma_1-h_1
};\,x_2,x_1
\right)
\\ \hphantom{\F_{\hat\bla, \varnothing}\left({
\frac{1}{N}\phantom{000}\theta_2-\frac{1}{N}
\atop
\sigma_3\phantom{00}\sigma_3+h_i\phantom{00}\sigma_1-h_1
};\,x_2,x_3
\right)
=}
{}\times\B_{j,i}\left[\begin{matrix}
 \theta_2&\frac{1}{N}
 \\
 \sigma_3&\sigma_1\end{matrix}
\Bigl| \frac{x_3}{x_2}\right]
q^{N\theta_2^2/2-3\theta_2/2+1/N}\bigg(\frac{x_3}{x_2}\bigg)^{\theta_2-1/N}\!\!,
\end{gather*}
where $\hat \bla=(1^m,\varnothing,\dots,\varnothing)$. Substituting $\theta_2+1/N$, $\sigma_1-h_1$ into $\theta_2$, $\sigma_1$, respectively, we obtain
\begin{gather*}
\F_{\hat\bla, \varnothing}\left({
\frac{1}{N}\phantom{000}\theta_2
\atop
\sigma_3\phantom{00}\sigma_3+h_i\phantom{00}\sigma_1
};\,x_2,x_3
\right)
=\sum_{j=1}^N\mathcal{F}_{\hat\bla,\varnothing}\left({
\theta_2\phantom{0000}\frac{1}{N}
\atop
\sigma_3\phantom{00}\sigma_1-h_j\phantom{00}\sigma_1
};\,x_2,x_1
\right)
\\ \hphantom{\F_{\hat\bla, \varnothing}\left({
\frac{1}{N}\phantom{000}\theta_2
\atop
\sigma_3\phantom{00}\sigma_3+h_i\phantom{00}\sigma_1
};\,x_2,x_3
\right)
=}
{}\times\B_{j,i}\left[\begin{matrix}
 \theta_2&\frac{1}{N}
 \\
 \sigma_3&\sigma_1\end{matrix}
\Bigl| \frac{x_3}{x_2}\right]q^{N\theta_2^2/2-\theta_2/2}\left(\frac{x_3}{x_2}\right)^{\theta_2}
\end{gather*}
due to the periodicity~\eqref{eq_shifts_conncetion_matrix_theta}
 of the connection matrix $\B\left[\begin{smallmatrix}
 \theta_2&\frac{1}{N}
 \\
 \sigma_3&\sigma_1\end{smallmatrix}
\bigl| x\right]
$. In~this way, by using Corollary~\ref{cor_contiguity_relation_F} repeatedly and by the symmetry of~$\mathcal{F}_{\bla,\bnu}$ with respect to $\bla,\bnu$, we obtain the connection formulas for any degenerate $\mathcal{F}_{\bla,\bnu}$ whose connection matrix
 is the same of degenerate $\mathcal{F}_{\varnothing,\varnothing}$.
 Therefore, we obtain
\begin{Theorem}\label{thm_CF_qCB}
We have
\begin{gather*}
\mathcal{F}\left(
{\theta_4\phantom{000} \frac{1}{N}\phantom{00000} \theta_2 \phantom{000} \theta_1
\phantom{}
\atop
\sigma_4 \phantom{00} \sigma_3\phantom{00} \sigma_3+h_i \phantom{00}
\sigma_1\phantom{00} \sigma_0}
 ;\,
 x_4,x_2,x_3,x_1\right)
\\ \qquad
{}=\sum_{j=1}^{N} \mathcal{F}
\left(
{\theta_4\phantom{000} \theta_2\phantom{00000} \frac{1}{N} \phantom{000} \theta_1
\phantom{}
\atop
\sigma_4 \phantom{00} \sigma_3\phantom{00} \sigma_1-h_j \phantom{00}
\sigma_1\phantom{00} \sigma_0}
 ;\,
 x_4,x_3,x_2,x_1
 \right)\B_{j,i}\left[\begin{matrix}
 \theta_2&\frac{1}{N}
 \\
 \sigma_3&\sigma_1\end{matrix}
\Bigl| \frac{x_3}{x_2}\right]
\\ \qquad\phantom{=}
{}\times q^{N\theta_2^2/2-\theta_2/2}\bigg(\frac{x_3}{x_2}\bigg)^{\theta_2}.
\end{gather*}
\end{Theorem}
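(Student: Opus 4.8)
The plan is to prove the identity coefficient by coefficient in the expansion of the $6$-point block in the spectator variables $x_1$ and $x_4$. On each side of Theorem \ref{thm_CF_qCB} the coefficient of $x_1^{|\bla|}x_4^{-|\bnu|}$ is the block $\mathcal{F}_{\bla,\bnu}$ in the corresponding field ordering, once the common prefactor depending only on $x_1$, $x_4$ and the outer weights is divided out; moreover the connection matrix $\mathcal{B}_{j,i}\bigl[\begin{smallmatrix}\theta_2&1/N\\\sigma_3&\sigma_1\end{smallmatrix}\big|\tfrac{x_3}{x_2}\bigr]$ and the scalar $q^{N\theta_2^2/2-\theta_2/2}(x_3/x_2)^{\theta_2}$ are independent of $(\bla,\bnu)$. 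Hence it suffices to prove, for every $(\bla,\bnu)$, the coefficient-wise connection formula expressing $\mathcal{F}_{\bla,\bnu}$ with intermediate weight $\sigma_3+h_i$ and arguments $x_2,x_3$ as $\sum_{j=1}^N\mathcal{F}_{\bla,\bnu}$ with intermediate weight $\sigma_1-h_j$ and arguments $x_3,x_2$, times this same $\mathcal{B}_{j,i}$ and scalar; summing these against $x_1^{|\bla|}x_4^{-|\bnu|}$ reassembles the theorem, with $\mathcal{B}$ pulled out of the sum. The base case $\bla=\bnu=(\varnothing,\dots,\varnothing)$ is exactly \eqref{eq_connection_problem_deg_4pt} after the relabeling $(\theta_1,\sigma_2,\sigma_0,x_1,x_2)\mapsto(\theta_2,\sigma_3,\sigma_1,x_2,x_3)$, because $\mathcal{F}_{\varnothing,\varnothing}$ is the $4$-point block.

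I would prove the coefficient-wise formula by induction, building an arbitrary component $\lambda_j$ from the empty partition one column at a time through the operation $\bla\mapsto\hat\bla$ that adjoins a column $1^m$ with $m\ge\ell(\lambda_j)$, and then handling $\bnu$ by the $\bla\leftrightarrow\bnu$ symmetry of $\mathcal{F}_{\bla,\bnu}$. For the inductive step I would apply the first-order $q$-difference operator $C\bigl(q^{-m+\theta_2+\sigma_1^{(j)}}T_{q,x_2}-1\bigr)$ of the first relation of Corollary \ref{cor_contiguity_relation_F} to both sides of the connection formula for $\mathcal{F}_{\bla,\bnu}$. On the left-hand side this produces $\mathcal{F}_{\hat\bla,\bnu}$ directly, with $\theta_2$ shifted to $\theta_2-1/N$ and the last weight $\sigma_1$ to $\sigma_1-h_j$. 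On the right-hand side I would commute $T_{q,x_2}$ past the connection matrix, which is legitimate because $\mathcal{B}$ is invariant under $x\mapsto qx$ by \eqref{eq_q-shift_on_x_connection_matrix}; the operator then acts only on the block factor and the power $(x_3/x_2)^{\theta_2}$, and after absorbing the $q$-powers it reduces exactly to the operator of the second relation of Corollary \ref{cor_contiguity_relation_F}, converting the block into $\mathcal{F}_{\hat\bla,\bnu}$. The common prefactor $C$ cancels between the two relations, leaving a connection formula for $\mathcal{F}_{\hat\bla,\bnu}$ that carries the shifted parameters $\theta_2-1/N$, $\sigma_1-h_j$, an overall sign $-1$, and the modified scalar $q^{N\theta_2^2/2-3\theta_2/2+1/N}(x_3/x_2)^{\theta_2-1/N}$.

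To return to the normalization of the statement I would relabel $\theta_2$ and $\sigma_1$ so that the blocks regain their standard form, namely $\theta_2-1/N\mapsto\theta_2$ and $\sigma_1-h_j\mapsto\sigma_1$; a short computation shows that this turns the modified scalar back into $q^{N\theta_2^2/2-\theta_2/2}(x_3/x_2)^{\theta_2}$. The connection matrix, which the operator left untouched, now carries the arguments $\theta_2+1/N$ and $\sigma_1+h_j$, and by the quasi-periodicity \eqref{eq_shifts_conncetion_matrix_theta} this equals $-\mathcal{B}_{j,i}\bigl[\begin{smallmatrix}\theta_2&1/N\\\sigma_3&\sigma_1\end{smallmatrix}\big|\tfrac{x_3}{x_2}\bigr]$; the resulting sign cancels the $-1$ from the previous step, so the original connection matrix reappears unchanged. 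Iterating over all columns of all components of $\bla$ and then invoking the $\bla\leftrightarrow\bnu$ symmetry yields the coefficient-wise formula for every $(\bla,\bnu)$, always with the connection matrix of $\mathcal{F}_{\varnothing,\varnothing}$, which is precisely what the coefficient reduction requires.

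The main obstacle is bookkeeping rather than anything conceptual: one must track exactly how $T_{q,x_2}$ redistributes the powers of $x_3/x_2$ and of $q$ as it passes through $\mathcal{B}$ and acts on the block, and then verify that $C$ cancels and that the sign from \eqref{eq_shifts_conncetion_matrix_theta} offsets the sign generated by the operator, so that $\mathcal{B}$ emerges literally independent of $(\bla,\bnu)$. The two structural facts that make these cancellations work---and that I would emphasize---are the $x$-periodicity \eqref{eq_q-shift_on_x_connection_matrix} of $\mathcal{B}$, which lets the $q$-difference operator commute past the connection matrix, and the quasi-periodicity \eqref{eq_shifts_conncetion_matrix_theta} under $\theta_1\mapsto\theta_1+1/N$, $\sigma_0\mapsto\sigma_0+h_i$, which absorbs the parameter shift introduced by the contiguity relations. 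Granting these, the contiguity relations of Corollary \ref{cor_contiguity_relation_F} (themselves consequences of Theorem \ref{thm_contiguity_relations_S}) carry all the analytic content and the induction closes.
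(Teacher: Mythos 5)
Your proposal is correct and follows essentially the same route as the paper: reduce to the coefficients $\mathcal{F}_{\bla,\bnu}$ of $x_1^{|\bla|}x_4^{-|\bnu|}$, take the degenerate $4$-point connection formula~\eqref{eq_connection_problem_deg_4pt} as the base case, and induct by applying the contiguity operator of Corollary~\ref{cor_contiguity_relation_F} to both sides, commuting it past $\B$ via~\eqref{eq_q-shift_on_x_connection_matrix} and restoring the parameters and sign via~\eqref{eq_shifts_conncetion_matrix_theta}, finishing with the $\bla\leftrightarrow\bnu$ symmetry. The cancellations you highlight (the prefactor $C$, the sign, and the scalar returning to $q^{N\theta_2^2/2-\theta_2/2}(x_3/x_2)^{\theta_2}$) are exactly the ones the paper's argument rests on.
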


\section{CFT construction}\label{S3}

In~this section, we construct a monodromy invariant fundamental solution of a linear $q$-difference system as a Fourier transform of~$q$-Nekrasov functions multiplied by some factors. Our fundamental system degenerates by the limit $q\to 1$ to a monodromy invariant fundamental system of a Fuchsian system of rank $N$ with spectral type
\begin{gather*}
\left(1^N\right),\quad \left(1^N\right), \quad \left(N-1,1\right),\quad \dots,\quad \left(N-1,1\right).
\end{gather*}

Another monodromy invariant fundamental solution of this Fuchsian system was constructed
in~\cite{GIL2} by using the $W$ conformal field theory. The corresponding tau function
was expressed as a~Fourier transform of the {\it semi-degenerate} $W_N$ conformal block
with the central charge $c=N-1$. Hence, by the uniqueness of a fundamental system of a Fuchsian system, $W$ conformal blocks are equal to 4d Nekrasov functions. We~note that in what follows, we assume convergence of~5d~Nekrasov partition functions, and moreover, Fourier transforms of them. Analyticity of~5d Nekrasov partition functions in cases different from our case
 was discussed in~\cite{FM}.

\subsection{Lax form}
Let $R$ be defined by
\begin{gather*}
R=\left\{n\in \Z^{N}\, \middle|\, \sum_{i=1}^N n^{(i)}=0\right\}\!,
\end{gather*}
which may be regarded as the root lattice of~$\frak{sl}_N$. We~set
\begin{gather*}
\tilde x=q^{N\sum_{i=1}^{m+1}\theta_i}x,\qquad
\tilde t_i=q^{N\sum_{j=i+1}^{m+1}\theta_j}t_i,\qquad i=1,\dots, m+1,\\
 s=\prod_{i=1}^m\prod_{j=1}^N s_{i,j},
\qquad s_i=\prod_{j=1}^Ns_{i,j},
\qquad i=1,\dots,m
\end{gather*}
for variables $x$, $t_i$, and $s_{i,j}$, $i=1,\dots, m$, $j=1,\dots,N$. We~denote $s_i^n=\prod_{j=1}^N s_{i,j}^{n^{(j)}}$ for $n\in R$
 and
$s^n=\prod_{i=1}^ms_i^{n_i}$ for $n=(n_1,\dots, n_m)\in R^m$.

We define a $N$ by $N$ matrix for $\theta_1,\dots,\theta_{m+1}\in\C$ and $\theta_0,\theta_\infty,\sigma_1,\dots,\sigma_{m+1}\in \C^N$ by
\begin{gather*}
Y^{(0,1)}_{i,j}(x,t)=\frac{1}{\tau_i^{(0,1)}(t)}
\\ \hphantom{Y^{(0,1)}_{i,j}(x,t)=}
{}\times \!\!\!\sum_{n=(n_1,\dots,n_m)\in R^m}\!\!\!\!\!s^n\F\!
\left(
{\phantom{00} \frac{1}{N}\phantom{000000000} (\theta_{\ell})_{\ell=1}^{m+1}
\atop
\theta_\infty-h_i \phantom{00} \theta_\infty-h_i+h_j\phantom{00} (\sigma_\ell+n_\ell)_{\ell=1}^m \phantom{00}
\theta_0}
 ;\,
 \tilde x, (\tilde t_{\ell})_{\ell=1}^{m+1}
 \right)\!.
 \end{gather*}
 Here, the function $\tau_i^{(0,1)}(t)$ is the normalization factor so that
\begin{gather*}
 Y^{(0,1)}(x,t)=\big( I+Y_1(t)x^{-1}+O(x^{-2})\big)
 \mathop{\rm diag}\big(x^{-\theta_\infty^{(1)}},\dots, x^{-\theta_\infty^{(N)}}\big),
 \end{gather*}
 where $I$ is the unit matrix of rank $N$.
 The matrix $Y^{(0,1)}(x,t)$ is an expansion around $x=\infty$. We~define the $N$ by $N$ matrices by
 \begin{gather*}
Y^{(k,k+1)}_{i,j}(x,t)=\frac{x^{-\sum_{\ell=1}^k\theta_\ell}}{\tau_i^{(k,k+1)}(t)}
\sum_{n\in R^m}s^ns_k^{h_j-h_N}
\\ \qquad
{}\times\F
\Bigg(
{\phantom{000} (\theta_{\ell})_{\ell=1}^{k}\phantom{00000000099} \frac{1}{N}\phantom{00} (\theta_\ell)_{\ell=k+1}^{m+1}\phantom{}
\atop
\theta_\infty-h_i \phantom{00} (\sigma_\ell-h_N+n_\ell)_{\ell=1}^{k}\phantom{00}
\sigma_k+h_j-h_N+n_k\phantom{00}(\sigma_{\ell}+n_{\ell})_{\ell=k+1}^{m} \phantom{00}
\theta_0}
 ;
 \\ \qquad\hphantom{\times\F\Bigg(0}
 (\tilde t_{\ell})_{\ell=1}^k,\tilde x,(\tilde t_\ell)_{\ell=k+1}^{m+1}
 \Bigg),
\end{gather*}
for $k=1,\dots, m$, and
\begin{gather*}
Y^{(m+1,m+2)}_{i,j}(x,t)
 \\ \qquad
{}=\frac{x^{-\sum_{\ell=1}^{m+1}\theta_\ell}}{\tau_i^{(m+1,m+2)}(t)}
\sum_{n\in R^m}s^n\F
\left(
{\phantom{000} (\theta_{\ell})_{\ell=1}^{m+1}\phantom{00000000099} \frac{1}{N}\phantom{}
\atop
\theta_\infty-h_i \phantom{00} (\sigma_\ell-h_N+n_\ell)_{\ell=1}^{m}\phantom{00}
\theta_0-h_j \phantom{00}
\theta_0}
 ;\,
 (\tilde t_{\ell})_{\ell=1}^{m+1},\tilde x
 \right)\!,
 \end{gather*}
 where the functions $\tau_i^{(k,k+1)}(t)$ are given recursively by
 \begin{gather}\label{eq_tau_{k,k+1}}
 \tau_i^{(k,k+1)}(t)=\tau_i^{(k-1,k)}(t)\,
 q^{N\theta_k^2/2+\theta_k/2+N\theta_k\sum_{i=1}^{k-1}\theta_i}t_k^{-\theta_k}.
 \end{gather}

By the definition of~$q$-conformal blocks,
asymptotic behaviours of the matrices of~$Y^{(0,1)}(x,t)$ and $Y^{(m+1,m+2)}(x,t)$ are given by
 \begin{gather}
 Y^{(0,1)}(x,t)=\widehat{Y}^{(0,1)}(x,t)
 \mathop{\rm diag}\big(x^{-\theta_\infty^{(1)}},\dots, x^{-\theta_\infty^{(N)}}\big),
\label{eq_aymptotic_behaviour_Y_infty}
\\
\widehat{Y}^{(0,1)}(x,t)= I+Y_1(t)x^{-1}+O\big(x^{-2}\big),\nonumber
 \\
 Y^{(m+1,m+2)}(x,t)=\widehat{Y}^{(m+1,m+2)}(x,t)
 \mathop{\rm diag}\big(x^{-\theta_0^{(1)}},\dots,x^{-\theta_0^{(N)}}\big)x^{-\sum_{i=1}^{m+1}\theta_i},
\label{eq_aymptotic_behaviour_Y_0}
\\
\widehat{Y}^{(m+1,m+2)}(x,t)=G(t)(I+O(x)), \nonumber
 \end{gather}
 where $I$ is the unit matrix of rank $N$, $Y_1(t)$ and $G(t)$ are $N$ by $N$ matrices. The off diagonal elements of~$Y_1(t)$ and all elements of the coefficient matrix $G(t)$ will be expressed as ratios of~tau functions.
Furthermore,
asymptotic behaviours of the matrices
$Y^{(k,k+1)}(x,t)$, $k=1,\dots,m$, are given by
\begin{gather}
Y^{(k,k+1)}(x,t)=\widehat{Y}^{(k,k+1)}(x,t)
\mathop{\rm diag}\big(x^{-\sigma_k^{(1)}-{1}/{N}}\!,\dots,
x^{-\sigma_k^{(N-1)}-{1}/{N}},x^{-\sigma_k^{(N)}+1-{1}/{N}}\big)x^{-\sum_{i=1}^k\theta_i},
\nonumber
\\
\widehat{Y}^{(k,k+1)}(x,t)=\sum_{n=-\infty}^{\infty}Y_n^{(k,k+1)}(t)x^n.
\label{eq_aymptotic_behaviour_Y_k}
\end{gather}

We assume that each $\widehat{Y}^{(k,k+1)}(x,t)$ is holomorphic in the domain
\begin{gather}\label{eq_assumption_hol_Y}
|t_1|>|t_2|>\cdots>|t_k|>\big| q^{1+N\sum_{i=1}^k\theta_i}x\big|,\qquad
\big|q^{N\sum_{i=1}^k\theta_i}x\big|>|t_{k+1}|>\cdots>|t_{m+1}|,
\end{gather}
which is a natural assumption from the point of view of conformal blocks.

\begin{Theorem}
Let
\begin{gather}
\label{eq_auumption_theta}
1<\big| q^{-N\theta_i}\big|<\frac{1}{|q|},\qquad i=1,\dots,m+1,
\\
\label{eq_assumption_poles_Y}
\big|q^{-N\sum_{j=i}^{k+1}\theta_j}t_{k+1}\big|<|t_i|,\qquad
i=1,\dots,m,\ k=i,\dots,m.
\end{gather}
Then, the matrices $Y^{(k,k+1)}(x,t)$, $k=0,1,\dots,m$, form a fundamental solution of the $q$-linear difference system
\begin{gather}\label{eq_multi_Lax}
T_{q,x}(Y(x,t))=A(x,t)Y(x,t),\quad\ T_{q,t_i}(Y(x,t))=B_i(x,t)Y(x,t),\quad\ i=1,\dots,m+1,\!\!
\end{gather}
with $N$ by $N$ matrices $A(x,t)$, $B_i(x,t)$ satisfying
\begin{gather}
A(x,t)=\frac{A_{m+1}x^{m+1}+\sum_{k=0}^{m}A_k(t)x^k}{\prod_{k=1}^{m+1}
\big(x-q^{-1-N\sum_{j=1}^{k-1}\theta_j}t_k\big)},
\label{eq_multi_A}
\qquad \det A(x,t)=\prod_{k=1}^{m+1}\frac{x-q^{-1-N\sum_{j=1}^{k}\theta_j}t_k}{x-q^{-1-N\sum_{j=1}^{k-1}\theta_j}t_k},
\\
A_{m+1}=\mathop{\rm diag}(q^{-\theta_\infty^{(1)}},\dots,q^{-\theta_\infty^{(N)}}), \nonumber
\\
A_0(t)=(-1)^{m+1}q^{-m-1-\sum_{i=1}^{m+1}\theta_i}\prod_{k=0}^m\big(q^{-N\sum_{j=1}^k\theta_j}t_{k+1}
\big)G(t)
\mathop{\rm diag}(q^{-\theta_0^{(1)}},\dots,q^{-\theta_0^{(N)}})
G(t)^{-1},\nonumber
\\
B_i(x,t)=\frac{xI+B_{i,0}(t)}{x-q^{-N\sum_{j=1}^{i}\theta_j}t_i},
\qquad
\det B_i(x,t)=\frac{x-q^{-N\sum_{j=1}^{i-1}\theta_j}t_i}{x-q^{-N\sum_{j=1}^{i}\theta_j}t_i}.
\label{eq_multi_B}
\end{gather}
\end{Theorem}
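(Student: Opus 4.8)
The strategy is to verify that the matrices $Y^{(k,k+1)}(x,t)$ satisfy the two families of linear $q$-difference equations in \eqref{eq_multi_Lax} with coefficient matrices of the prescribed rational form, and that these matrices glue into a single fundamental solution. The essential input is that all $Y^{(k,k+1)}$ are built from the same $q$-conformal blocks, so the connection formula of Theorem~\ref{thm_CF_qCB} (together with the periodicity identities \eqref{eq_q-shift_on_x_connection_matrix}--\eqref{eq_det_connction_matrix}) relates their local expansions around the various singular points. First I would establish that each $Y^{(k,k+1)}$ is a solution. Because the ansatz is a Fourier transform over the root lattice $R^m$ of $q$-conformal blocks, I expect $T_{q,x}$ and $T_{q,t_i}$ to act on the block data in a controlled way: shifting $x\mapsto qx$ or $t_i\mapsto qt_i$ amounts to shifting the arguments $\tilde x$, $\tilde t_i$, which by the explicit form of $\mathcal F$ in \eqref{eq_qCB} produces only scalar prefactors and a reshuffling of the summation indices $n\in R^m$. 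I would compute these shifts termwise and show that the resulting linear relation among the $Y^{(k,k+1)}$ has matrix coefficients that are rational in $x$ with poles only at the prescribed points $x=q^{-1-N\sum_{j=1}^{k-1}\theta_j}t_k$.

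Next I would pin down the precise form of $A(x,t)$ and $B_i(x,t)$. The key is that $A(x,t)$ and $B_i(x,t)$ are \emph{independent of the solution} used to compute them, which follows from the connection formula: since any two of the $Y^{(k,k+1)}$ differ by right-multiplication by an $x$-periodic connection matrix $\mathcal B$ (by \eqref{eq_q-shift_on_x_connection_matrix}, $\mathcal B[\,\cdot\,|qx]=\mathcal B[\,\cdot\,|x]$), the quantity $T_{q,x}(Y)Y^{-1}$ is the same for all of them, and likewise for $T_{q,t_i}(Y)Y^{-1}$. This is what forces $A$ and $B_i$ to be globally well-defined rational matrices rather than merely local data. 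The pole structure of \eqref{eq_multi_A} and \eqref{eq_multi_B}, and the polynomial degree in $x$, I would read off from the assumed holomorphy domains \eqref{eq_assumption_hol_Y} and the asymptotic behaviours \eqref{eq_aymptotic_behaviour_Y_infty}--\eqref{eq_aymptotic_behaviour_Y_k}: the diagonal exponents at $x=\infty$ give $A_{m+1}=\mathop{\rm diag}(q^{-\theta_\infty^{(j)}})$, the leading behaviour at $x=0$ encoded by $G(t)$ gives $A_0(t)$, and the determinantal identities for $\det A$ and $\det B_i$ follow from the factorized exponents together with \eqref{eq_det_connction_matrix}.

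Then I would confirm that the $B_i(x,t)$ have exactly one simple pole in $x$ with $xI$ as leading term, which is dictated by the $t_i$-shift acting on the single block factor attached to the $i$-th singular point; the conditions \eqref{eq_auumption_theta} and \eqref{eq_assumption_poles_Y} are precisely what separate the relevant poles from the accumulation of poles coming from the infinite sum over $n$, guaranteeing convergence and the absence of spurious singularities in the stated annuli.

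\textbf{The main obstacle.} I expect the hard part to be showing that the $q$-shifts genuinely produce \emph{rational} coefficient matrices of the asserted degree, rather than transcendental or higher-order expressions. Concretely, one must prove that after applying $T_{q,x}$ the infinite Fourier sum reorganizes so that $T_{q,x}(Y)\,Y^{-1}$ truncates to a polynomial of degree $m+1$ in $x$ divided by the prescribed denominator; controlling the cancellations in this reorganization—which relies on the Nekrasov-factor contiguity identities of Lemma~\ref{lem-N-factors} propagated through all $m$ intermediate channels—together with verifying convergence in the annuli \eqref{eq_assumption_hol_Y} under hypotheses \eqref{eq_auumption_theta}--\eqref{eq_assumption_poles_Y}, is where the real work lies. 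Once $A$ and $B_i$ are established as single rational matrices independent of the chosen $Y^{(k,k+1)}$, the compatibility (zero-curvature) of the system and the fundamental-solution property follow from the connection formula and the nonvanishing of $\det Y^{(0,1)}$ implied by \eqref{eq_det_connction_matrix}.
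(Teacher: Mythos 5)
Your middle paragraphs do capture the paper's actual mechanism, and it is the right one: define $A(x,t)=T_{q,x}\big(Y^{(0,1)}\big)Y^{(0,1)}(x,t)^{-1}$ and $B_i(x,t)=T_{q,t_i}\big(Y^{(0,1)}\big)Y^{(0,1)}(x,t)^{-1}$, use Theorem~\ref{thm_CF_qCB} in the form of the gluing relation~\eqref{eq_CF_Ys} together with the periodicity~\eqref{eq_q-shift_on_x_connection_matrix} to see that every $Y^{(k,k+1)}$ produces the \emph{same} $A$ and $B_i$, conclude from the holomorphy assumption~\eqref{eq_assumption_hol_Y} that these are meromorphic on $\mathbb{P}^{1}$ and hence rational, and then pin down their form from the asymptotics at $0$ and $\infty$. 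But your stated ``main obstacle'' contradicts this and is not an obstacle at all: once $A$ agrees with $T_{q,x}\big(Y^{(k,k+1)}\big)Y^{(k,k+1)}(x,t)^{-1}$ on annuli covering $\mathbb{P}^{1}$, rationality and the degree bound are automatic from pole counting, and no termwise reorganization of the Fourier sum, no cancellation control, and no further use of Lemma~\ref{lem-N-factors} is needed (that lemma was already consumed in Section~\ref{S2} to establish the contiguity relations behind Theorem~\ref{thm_CF_qCB}). Likewise your first step, verifying termwise that each $Y^{(k,k+1)}$ solves the system, is redundant: $Y^{(0,1)}$ is a solution by the very definition of $A$, $B_i$, and the remaining $Y^{(k,k+1)}$ are solutions because they differ from it by right factors invariant under $T_{q,x}$ and $T_{q,t_i}$. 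Had you executed the plan in the order and with the emphasis you propose, you would have attempted an intractable computation that the structure of the proof is designed to avoid.

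You are also vague at the two points where the paper does something concrete, and the stated forms of~\eqref{eq_multi_A} and~\eqref{eq_multi_B} do not follow from ``reading off exponents.'' First, the paper computes $\det Y^{(0,1)}(x,t)$ exactly: relation~\eqref{eq_CF_Ys} and the determinant formula~\eqref{eq_det_connction_matrix} give the identity~\eqref{eq_det_Y_0_q}; the asymptotic behaviours make that expression meromorphic on $\mathbb{P}^{1}$, and the assumptions~\eqref{eq_assumption_hol_Y},~\eqref{eq_assumption_poles_Y} make it holomorphic, hence constant in $x$ (a Liouville-type step), yielding the explicit product~\eqref{eq_det_Y_infty}. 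Only from this do $\det A$, $\det B_i$, and the admissible pole locations of $A$ follow; conditions~\eqref{eq_auumption_theta} and~\eqref{eq_assumption_poles_Y} are used precisely here, to guarantee that $A$ has simple poles only at $x=q^{-1-N\sum_{j=1}^{k-1}\theta_j}t_k$, which combined with the behaviour at $\infty$ gives~\eqref{eq_multi_A}. Second, the single simple pole of $B_i$ is not ``dictated by the $t_i$-shift acting on one block factor'': the paper uses the equation $Y(qx)=A(x)Y(x)$ to list the possible poles of the analytic continuations of $Y^{(0,1)}(x,t)^{\pm1}$ and $Y^{(m+1,m+2)}(x,t)^{\pm1}$, and since $B_i$ is computable from either solution, intersecting the two lists leaves only the pole at $x=q^{-N\sum_{j=1}^{i}\theta_j}t_i$, which gives~\eqref{eq_multi_B}. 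Filling in these two arguments would turn your outline into the paper's proof.
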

\begin{proof}
Thanks to Theorem~\ref{thm_CF_qCB}, we have for $k=0,1,\dots,m+1$
\begin{gather*}
Y^{(k,k+1)}_{i,j}(x,t)=\frac{x^{-\sum_{\ell=1}^k\theta_\ell}}{\tau_i^{(k,k+1)}(t)}
\sum_{n\in R^m}s^ns_k^{h_j-h_N}
\\ \qquad
{}\times\sum_{p=1}^N\F
\Bigg(
{\phantom{000} (\theta_{\ell})_{\ell=1}^{k+1}\phantom{00000000099} \frac{1}{N}\phantom{00} (\theta_\ell)_{\ell=k+2}^{m+1}\phantom{}
\atop
\theta_\infty-h_i \phantom{00} (\sigma_\ell-h_N+n_\ell)_{\ell=1}^{k}\phantom{00}
\sigma_{k+1}+n_{k+1}-h_p\phantom{00}(\sigma_{\ell}+n_{\ell})_{\ell=k+1}^{m} \phantom{00}
\theta_0}
;
\\ \qquad\hphantom{\times\sum_{p=1}^N\F\Bigg(0}
(\tilde t_{\ell})_{\ell=1}^{k+1},\tilde x,(\tilde t_\ell)_{\ell=k+2}^{m+1}
\Bigg)\B_{p,j}\left[\begin{matrix}
\theta_{k+1}&\frac{1}{N}
\\
\sigma_{k}-h_N+n_k&\sigma_{k+1}+n_{k+1}\end{matrix}
\Bigl| \frac{t_{k+1}}{q^{N\sum_{i=1}^{k+1}\theta_i}x}\right]
\\ \qquad
{}\times q^{N\theta_{k+1}^2/2-\theta_{k+1}/2}
\bigg( \frac{t_{k+1}}{q^{N\sum_{i=1}^{k+1}\theta_i}x}\bigg)^{\theta_{k+1}}.
\end{gather*}
By the periodicity~\eqref{eq_shifts_connection_matrix_theta+1},~\eqref{eq_shifts_conncetion_matrix_theta} of the connection matrix $\B$ and the definition~\eqref{eq_tau_{k,k+1}} of the function~$\tau_i^{(k,k+1)}(t)$, we obtain
\begin{gather*}
Y^{(k,k+1)}_{i,j}(x,t)=\frac{x^{-\sum_{\ell=1}^{k+1}\theta_\ell}}{\tau_i^{(k+1,k+2)}(t)}
\sum_{p=1}^N\B_{p,j}\left[\begin{matrix}
\theta_{k+1}&\frac{1}{N}
\\
\sigma_{k}-h_N&\sigma_{k+1}\end{matrix}
\Bigl| \frac{t_{k+1}}{q^{N\sum_{i=1}^{k+1}\theta_i}x}\right]\sum_{n\in R^m}s^ns_k^{h_j-h_N}
\\ \qquad
\times\F
\Bigg(
{\phantom{000} (\theta_{\ell})_{\ell=1}^{k+1}\phantom{00000000099} \frac{1}{N}\phantom{00} (\theta_\ell)_{\ell=k+2}^{m+1}\phantom{}
	\atop
	\theta_\infty-h_i \phantom{00} (\sigma_\ell-h_N+n_\ell)_{\ell=1}^{k}\phantom{00}
	\sigma_{k+1}+n_{k+1}-h_p\phantom{00}(\sigma_{\ell}+n_{\ell})_{\ell=k+1}^{m} \phantom{00}
	\theta_0}
;
\\ \qquad\hphantom{\times\F
\Bigg(0}
(\tilde t_{\ell})_{\ell=1}^{k+1},\tilde x,(\tilde t_\ell)_{\ell=k+2}^{m+1}
\Bigg).
\end{gather*}
 Hence, we have
 \begin{align*}
 Y^{(k,k+1)}_{i,j}(x,t)=s_k^{h_j-h_N}\sum_{p=1}^N
 Y^{(k+1,k+2)}_{i,p}(x,t)
 \B_{p,j}\left[\begin{matrix}
 \theta_{k+1}&\frac{1}{N}
 \\
 \sigma_{k}-h_N&\sigma_{k+1}\end{matrix}
 \Bigl| \frac{t_{k+1}}{q^{N\sum_{i=1}^{k+1}\theta_i}x}\right]\!.
 \end{align*}
Therefore we obtain for $k=0,1,\dots,m+1$
\begin{gather}\label{eq_CF_Ys}
Y^{(k,k+1)}(x,t)=Y^{(k+1,k+2)}(x,t)\B\left[\begin{matrix}
 \theta_{k+1}&\frac{1}{N}
 \\
 \sigma_{k}-h_N&\sigma_{k+1}\end{matrix}
\Bigl| \frac{t_{k+1}}{q^{N\sum_{i=1}^{k+1}\theta_i}x}\right]\nonumber
\\ \hphantom{Y^{(k,k+1)}(x,t)=}
{}\times\mathop{\rm diag}\big(s_k^{h_1-h_N},\dots,s_k^{h_{N-1}-h_N},1\big),
\end{gather}
where $\sigma_0=\theta_\infty+h_N$, $\sigma_{m+1}=\theta_0$,
$t_0=1$, and $s_0=1$.

Put
\begin{gather}
A(x,t)=T_{q,x}\big(Y^{(0,1)}(x,t)\big)Y^{(0,1)}(x,t)^{-1},\label{eq_def_A_B_1}
\\
B_i(x,t)=T_{q,t_i}\big(Y^{(0,1)}(x,t)\big)Y^{(0,1)}(x,t)^{-1},\qquad i=1,\dots,m+1.
\label{eq_def_A_B_2}
\end{gather}
Since all connection matrices are independent of
the $q$-shifts with respect to the variables $x$ and $t_i$ ($i=1,\dots,m+1$) by~\eqref{eq_q-shift_on_x_connection_matrix},
from~\eqref{eq_CF_Ys} the matrix functions $A(x,t)$, $B_i(x,t)$ of~$x$ are analytically continued on $\mathbb{P}^{1}$ as
\begin{gather*}
A(x,t)\!=T_{q,x}\big(Y^{(0,1)}(x,t)\big)Y^{(0,1)}(x,t)^{-1}
\!=\cdots\!=T_{q,x}\big(Y^{(m+1,m+2)}(x,t)\big)Y^{(m+1,m+2)}(x,t)^{-1},
\\
B_i(x,t)\!=T_{q,t_i}\big(Y^{(0,1)}(x,t)\big)Y^{(0,1)}(x,t)^{-1}
\!=\cdots\!=T_{q,t_i}\big(Y^{(m+1,m+2)}(x,t)\big)Y^{(m+1,m+2)}(x,t)^{-1}\!.
\end{gather*}
Therefore by the assumption on holomorphicity of~$\widehat{Y}^{k,k+1}(x,t)$, the functions $A(x,t)$, $B_i(x,t)$ of~$x$ are meromorphic functions of~$x$ on $\mathbb{P}^{1}$, namely, rational functions.

As a result, the matrices $Y^{(k,k+1)}(x,t)$ satisfy the $q$-linear difference system
\begin{gather*}
T_{q,x}(Y(x,t))=A(x,t)Y(x,t),\qquad
T_{q,t_i}(Y(x,t))=B_i(x,t)Y(x,t),\qquad i=1,\dots,m+1,
\end{gather*}
with $N\times N$ matrix rational functions $A(x,t)$, $B_i(x,t)$ of~$x$.

The forms of~$A(x,t)$ and $B_i(x,t)$ can be deduced from $\det Y^{(0,1)}(x,t)$.
The connection formula~\eqref{eq_CF_Ys} yields
\begin{gather}
\prod_{i=0}^m\frac{\big(q^{-N\sum_{j=1}^i\theta_j}t_{i+1}/x;q\big)_\infty}
{\big(q^{-N\sum_{j=1}^{i+1}\theta_j}t_{i+1}/x;q\big)_\infty}\,\det Y^{(0,1)}(x,t) \nonumber
\\ \qquad
{}=q^{N\sum_{i=1}^k\theta_i(N\sum_{i=1}^k\theta_i+1)/2}x^{N\sum_{i=1}^k\theta_i}
\prod_{i=0}^{k-1} s_i^{\sum_{j=1}^{N-1}(h_j-h_N)}t_{i+1}^{-N\theta_{i+1}}
\prod_{1\leq i<j\leq N}
\frac{\vartheta\big(\theta_\infty^{(i)}-\theta_\infty^{(j)}\big)}
{\vartheta\big(\theta_k^{(i)}-\theta_k^{(j)}\big)}\nonumber
\\ \qquad\phantom{=}
{}\times\prod_{i=k}^m\frac{\big(q^{-N\sum_{j=1}^i\theta_j}t_{i+1}/x;q\big)_\infty}
{\big(q^{-N\sum_{j=1}^{i+1}\theta_j}t_{i+1}/x;q\big)_\infty}\!
\prod_{i=0}^{k-1}\frac{\big(q^{1+N\sum_{j=1}^{i+1}\theta_j}x/t_{i+1};q\big)_\infty}
{\big(q^{1+N\sum_{j=1}^{i}\theta_j}x/t_{i+1};q\big)_\infty}
\, \det Y^{(k,k+1)}(x,t),\!\!\label{eq_det_Y_0_q}
\end{gather}
where $k=1,\dots,m+1$. The asymptotic behaviours~\eqref{eq_aymptotic_behaviour_Y_infty},
\eqref{eq_aymptotic_behaviour_Y_0},~\eqref{eq_aymptotic_behaviour_Y_k} imply that~\eqref{eq_det_Y_0_q} is a meromorphic function of~$x$ on $\mathbb{P}^1$.
Since $\widehat{Y}^{(k,k+1)}(x,t)$ is holomorphic in the domain
$\big|q^{-N\sum_{i=1}^k\theta_i}t_{k+1}\big|<|x|<\big|q^{-1-N\sum_{i=1}^{k-1}\theta_i}t_k\big|$ from the assumption~\eqref{eq_assumption_hol_Y}, the meromorphic function~\eqref{eq_det_Y_0_q} of~$x$ is holomorphic on $\mathbb{P}^1$ due to the condition~\eqref{eq_assumption_poles_Y}. Hence, it is a constant with respect to $x$, and therefore, using~\eqref{eq_aymptotic_behaviour_Y_infty} again, we obtain
\begin{gather}\label{eq_det_Y_infty}
\det Y^{(0,1)}(x,t)=\prod_{k=0}^m
\frac{\big(q^{-N\sum_{i=1}^{k+1}\theta_i}t_{k+1}/x;q\big)_\infty}
{\big(q^{-N\sum_{i=1}^k\theta_i}t_{k+1}/x;q\big)_\infty}.
\end{gather}

Then by the definitions~\eqref{eq_def_A_B_1},~\eqref{eq_def_A_B_2} we obtain
\begin{gather*}
\det A(x,t)=\prod_{k=1}^{m+1}\frac{x-q^{-1-N\sum_{i=1}^{k}\theta_i}t_k}{x-q^{-1-N\sum_{i=1}^{k-1}\theta_i}t_k},
\\
\det B_i(x,t)=\frac{x-q^{-N\sum_{j=1}^{i-1}\theta_j}t_i}{x-q^{-N\sum_{j=1}^{i}\theta_j}t_i},
\qquad i=1,\dots,m+1.
\end{gather*}
We recall that for each $k=0,1,\dots,m+1$, we have
\begin{gather*}
A(x,t)=T_{q,x}\big(Y^{(k,k+1)}(x,t)\big)Y^{(k,k+1)}(x,t)^{-1}.
\end{gather*}
from the definitions~\eqref{eq_def_A_B_1},~\eqref{eq_def_A_B_2} and the connection relations~\eqref{eq_CF_Ys}. Hence, in the domain
\begin{gather*}
\big| q^{-1-N\sum_{i=1}^k\theta_i}t_{k+1}\big|<|x|<
\big| q^{-1-N\sum_{i=1}^k\theta_i}t_k\big|,
\end{gather*}
 possible poles of~$A(x,t)$ are the zeros of~$\det\left( Y^{(k,k+1)}(x,t)\right)$. From~\eqref{eq_det_Y_0_q} we can read where
the zeros of~$\det\left( Y^{(k,k+1)}(x,t)\right)$ is.
 The conditions~\eqref{eq_auumption_theta} and~\eqref{eq_assumption_poles_Y} imply that $A(x,t)$ only has simple poles at $x=q^{-1-N\sum_{i=1}^{k-1}\theta_it_k}$, $k=1,\dots,m+1$.

Therefore, together with the asymptotic behaviour~\eqref{eq_aymptotic_behaviour_Y_infty}, we have
\begin{gather*}
A=\frac{A_{m+1}+\sum_{k=0}^{m}A_k(t)x^k}{\prod_{k=1}^{m+1}
\big(x-q^{-1-N\sum_{j=1}^{k-1}\theta_j}t_k\big)}.
\end{gather*}

The $q$-difference equation $Y(qx)=A(x)Y(x)$ entails that possible (simple) poles of the analytic continuations of~$Y^{(0,1)}(x,t)^{\pm 1}$, $Y^{(m+1,m+2)}(x,t)^{\pm 1}$ are
\begin{gather*}
Y^{(0,1)}(x,t)\colon \ q^{\ell-N\sum_{j=1}^{k-1}}t_k,\qquad
Y^{(0,1)}(x,t)^{-1}\colon \ q^{\ell-N\sum_{j=1}^{k-1}}t_k, \ q^{\ell-N\sum_{j=1}^k}t_k,
\\
Y^{(m+1,m+2)}(x,t)\colon \ q^{-\ell-1-N\sum_{j=1}^{k-1}}t_k, \ q^{-\ell-1-N\sum_{j=1}^k}t_k,
\\
Y^{(m+1,m+2)}(x,t)^{-1}\colon \ q^{-\ell-1-N\sum_{j=1}^{k-1}}t_k,
\end{gather*}
where $\ell \in \Z_{\geq 0}$, $k=1,\dots,m+1$. Hence, since
\begin{gather*}
B_i(x,t)=T_{q,t_i}\big(Y^{(0,1)}(x,t)\big) Y^{(0,1)}(x,t)^{-1}=
T_{q,t_i}\big(Y^{(m+1,m+2)}(x,t)\big) Y^{(m+1,m+2)}(x,t)^{-1},
\end{gather*}
$B_i(x,t)$ only has a simple pole at $x=q^{-N\sum_{j=1}^i\theta_j}t_i$. Therefore,
 we obtain~\eqref{eq_multi_B}.
\end{proof}

The compatibility conditions
\begin{gather}
\label{eq_com_pati_Lax}
T_{q,t_i}(A(x,t))B_i(x,t)=B_i(qx,t)A(x,t),\qquad i=1,\dots,m+1,
\end{gather}
of the Lax form~\eqref{eq_multi_Lax} with rational functions
$A(x,t)$ and $B_i(x,t)$ satisfying~\eqref{eq_multi_A} and~\eqref{eq_multi_B}
yield nonlinear $q$-difference system satisfied by the elements of~$A(x,t)$, $B_i(x,t)$
as functions of the variables $t$'s with the parameters $\theta$'s. When $m=1$ and $N=2$,
the derived nonlinear $q$-difference system is the $q$-difference VI equation~\cite{JS}. Fortunately,
the dependent variables of~$q$-$\mathrm{P_{VI}}$ are simply written by the elements
of~$A(x,t)$ and $B_2(x,t)$ and moreover they are expressed in terms of
Fourier transforms of~$q$-conformal blocks, namely, tau functions~\cite{Jimbo-Nagoya-Sakai}.
However, in the general case, nonlinear $q$-difference systems derived from compatibility conditions
are complicated and we have not found a way to nicely write down equation satisfied by the elements of~$A(x,t)$ and~$B_i(x,t)$. We~refer to~\cite{Sakai-q-Garnier} where the author
investigated the $N=2$ case and obtained a~$q$-analog of the Garnier system, see also
\cite{Nagao, Nagao-Yamada2, Nagao-Yamada1, Park}. We~also refer to~\cite{Fuji-Suzuki-RIMS2012} where the authors verified that the Lax form of a generalization $q$-$P_{(n,n)}$ of~$q$-$\mathrm{P_{VI}}$
obtained from the Drinfeld--Sokolov hierarchy is transformed to our Lax form~\eqref{eq_multi_Lax}
for $m=1$ by $q$-Laplace transformations \cite[Theorem~3.6]{Fuji-Suzuki-RIMS2012}.

The rational function $B_i(x,t)$ can be computed from the fundamental solution $Y^{(k,k+1)}(x,t)$.
In~particular, $B_{i,0}(t)$ has two expressions
\begin{gather}
B_{i,0}(t)=-q^{-N\sum_{j=1}^{i}\theta_j}t_iI+T_{q,t_i}(Y_1(t))-Y_1(t)
=-q^{-N\sum_{j=1}^{i}\theta_j}t_iT_{q,t_i}(G(t))G(t)^{-1},
\label{eq_Bi0}
\end{gather}
derived from
the asymptotic behaviours of the fundamental solution around $0$ and $\infty$. Recall that $Y_1(t)$, $G(t)$ are
defined by the asymtotoic expansions~\eqref{eq_aymptotic_behaviour_Y_infty} and~\eqref{eq_aymptotic_behaviour_Y_0}.
This produces
non-trivial relations among elements of~$Y_1(t)$ and $G(t)$, which can be viewed as
Fourier transforms of~$m+3$-point $q$-conformal blocks.

We define the tau function by
\begin{gather*}
\tau\left[ \theta | s,\sigma,t\right]=
q^{-N\Delta_{\theta_\infty}\sum_{k=1}^{m+1}\theta_k+N\sum_{k=1}^m\Delta_{N\theta_kh_1}
\sum_{j=k+1}^{m+1}\theta_j}
\\ \hphantom{\tau\left[ \theta | s,\sigma,t\right]=}
{}\times
\prod_{1\leq k<k'\leq N}G_q\big(1+\theta_\infty^{(k)}-\theta_\infty^{(k')}\big)
G_q\big(1-\theta_0^{(k)}+\theta_0^{(k')}\big)
\\ \hphantom{\tau\left[ \theta | s,\sigma,t\right]=}
{}\times\sum_{n\in R^m}s^n\F\left({
(\theta_p)_{p=1}^{m+1}
\atop
\theta_\infty\phantom{00}(\sigma_p+n_p)_{p=1}^m\phantom{00}\theta_0
};\,(\tilde t_p)_{p=1}^{m+1}
\right)\!.
\end{gather*}
Then
\begin{gather*}
\tau\left[ \theta | s,\sigma,t\right]=\sum_{n\in R^m}s^n \prod_{p=1}^{m+1}
t_p^{\Delta_{\sigma_{p-1}+n_{p-1}}-\Delta_{N\theta_ph_1}-\Delta_{\sigma_p+n_p}}
C[\theta| \sigma+n]Z[\theta|\sigma+n,t]
\end{gather*}
with the definition
\begin{gather*}
C[\theta|\sigma]
=\frac{\prod_{p=1}^{m+1}\prod_{k,k'=1}^NG_q\big(1+\sigma_{p-1}^{(k)}-\theta_p-\sigma_{p}^{(k')}\big)}
{\prod_{p=1}^{m}\prod_{k,k'=1}^NG_q\big(1+\sigma_p^{(k)}-\sigma_p^{(k')}\big)},
\\
Z[\theta|\sigma,t]
=\sum_{\bla_1,\dots, \bla_{m}\in\Y^N}
\prod_{p=1}^{m} \bigg( \frac{t_{p+1}}{t_{p}} \bigg)^{\left|\bla_p\right|}
 \frac{\prod_{p=1}^{m+1}\prod_{k,k'=1}^{N}N_{\lambda_{p-1}^{(k)}, \lambda_{p}^{(k')}}
\big(q^{\sigma_{p-1}^{(k)}-\theta_p-\sigma_{p}^{(k')}}\big)}
{\prod_{p=1}^{m}\prod_{k,k'=1}^{N} N_{\lambda_p^{(k)}, \lambda_{p}^{(k')}}
\big(q^{\sigma_p^{(k)}-\sigma_{p}^{(k')}}\big)}.
\end{gather*}
We note that the normalization factors $\tau_i^{(0,1)}(t)$, $i=1,\dots, N$, are
related to the tau function as
\begin{gather*}
\tau_i^{(0,1)}(t)=q^{N\Delta_{\theta_\infty}\sum_{k=1}^{m+1}\theta_k-N\sum_{k=1}^m\Delta_{N\theta_kh_1}
\sum_{j=k+1}^{m+1}\theta_j+\Delta_{\theta_\infty-h_i}}
\\ \hphantom{\tau_i^{(0,1)}(t)=}
{}\times
\bigg(\prod_{1\leq k<k'\leq N}G_q\big(1+\theta_\infty^{(k)}-\theta_\infty^{(k')}\big)
G_q\big(1-\theta_0^{(k)}+\theta_0^{(k')}\big)\bigg)^{-1}
\\ \hphantom{\tau_i^{(0,1)}(t)=}
{}\times \N'\Bigl({\kern-1pt\frac{1}{N}\atop{\theta_\infty-h_i \phantom{h_2} \theta_\infty}}
\Bigr)\tau\left[ \theta | s,\sigma,t\right].
\end{gather*}

Let us introduce other tau functions by
\begin{gather*}
\tau=\tau\left[ \theta | s,\sigma,t\right],\qquad
\tau_{ij}=\tau\left[\big(\theta_\infty-h_i+h_j, (\theta_p)_{p=1}^{m+2}\big)
|s,\sigma,t\right],
\\
\tilde\tau_{ij}=\tau\left[\big( \theta_\infty-h_i,(\theta_p)_{p=1}^{m+1},\theta_0-h_j\big)
|s,\sigma-h_N,t\right]
\end{gather*}
for $i,j=1,\dots,N$.

\begin{Proposition}
For $p=1,\dots,m+1$ and $i,j=1,\dots, N$, $i\neq j$, we have the determinantal identity\vspace{-1ex}
\begin{gather}\label{eq_det_tau_B}
T_{q,t_p}\left(\frac{\tau_{ij}}{\tau}\right)-\frac{\tau_{ij}}{\tau}=
\frac{q^{-N\sum_{k=1}^{p}\theta_k+\theta_p}t_pD_{ij}}{T_{q,t_p}(\tau)\tau^{N-1}}
\renewcommand*{\arraystretch}{.9}
 \left| \begin{matrix}
\tilde \tau_{11}&\tilde \tau_{12}&\cdots&\tilde \tau_{1N}
\\
\vdots&\vdots&&\vdots
\\
\tilde \tau_{j-1 1}&\tilde \tau_{j-1 2}&\cdots&
\tilde \tau_{j-1 N}
\\[.5ex]
T_{q,t_p}\left(\tilde \tau_{i1}\right)&T_{q,t_p}\left(\tilde\tau_{i2}\right)&\cdots&
T_{q,t_p}\left(\tilde \tau_{iN}\right)
\\[.5ex]
\tilde \tau_{j+1 1}&\tilde \tau_{j+1 2}&\cdots&
\tilde \tau_{j+1 N}
\\
\vdots&\vdots&&\vdots
\\
\tilde \tau_{N1}&\tilde \tau_{N2}&\cdots&\tilde \tau_{NN}
\end{matrix}\right|,
\end{gather}
where\vspace{-.5ex}
\begin{gather*}
D_{ij}=(-1)^{(m+1)(N-1)+1}
q^{N(N-1)\sum_{k=1}^{m+1}\theta_k/2+\theta_\infty^{(i)}-\theta_\infty^{(j)}+\sum_{k=1}^Nk(\theta_\infty^{(k)}-\theta_0^{(k)})}
\big[1-\theta_\infty^{(i)}+\theta_\infty^{(j)}\big]
\\ \hphantom{D_{ij}=}
{}\times
\prod_{k=1,\atop k\neq j}^N
\big[\theta_\infty^{(j)}-\theta_\infty^{(k)}\big]
\prod_{k<k'}\frac{1}{\big[\theta_\infty^{(k')}-\theta_\infty^{(k)}\big]
\big[\theta_0^{(k)}-\theta_0^{(k')}\big]}\, s\prod_{k=1}^ms_{k,N}^{-N}.
\end{gather*}
\end{Proposition}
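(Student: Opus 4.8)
The plan is to derive~\eqref{eq_det_tau_B} from the two expressions for $B_{p,0}(t)$ recorded in~\eqref{eq_Bi0} (read with $i$ replaced by $p$), together with a dictionary that identifies matrix entries of $Y_1(t)$ and $G(t)$ with ratios of tau functions. First I would make the dictionary precise. Comparing the $x^{-1}$-coefficient of the expansion~\eqref{eq_aymptotic_behaviour_Y_infty} of $Y^{(0,1)}(x,t)$ with the defining series of $\tau$ and $\tau_{ij}$, the degenerate insertion factorizes as $x\to\infty$ and one reads off $(Y_1)_{ij}=c_{ij}\,\tau_{ij}/\tau$ for $i\neq j$, the constant $c_{ij}$ being the ratio of the normalization $\tau_i^{(0,1)}(t)$, the degenerate factor $\N'$, and the $G_q$-products that distinguish the parameter $\theta_\infty-h_i+h_j$ of $\tau_{ij}$ from $\theta_\infty$ of $\tau$. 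In the same way, comparing the leading coefficient in~\eqref{eq_aymptotic_behaviour_Y_0} of $Y^{(m+1,m+2)}(x,t)$ with the series for $\tilde\tau_{ij}$ gives $G_{ij}=a_i b_j\,\tilde\tau_{ij}/\tau$; the essential structural point is that this constant is \emph{separable}, with a row factor $a_i$ depending only on $\theta_\infty-h_i$ and a column factor $b_j$ depending only on $\theta_0-h_j$, which is what lets a clean determinant emerge.

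Next I would take the $(i,j)$ entry with $i\neq j$ of the matrix identity $T_{q,t_p}(Y_1)-Y_1=-q^{-N\sum_{k=1}^p\theta_k}t_p\,T_{q,t_p}(G)\,G^{-1}$ furnished by~\eqref{eq_Bi0} (the diagonal term $-q^{-N\sum_{k=1}^p\theta_k}t_pI$ drops out off the diagonal). By the dictionary the left-hand side is $c_{ij}\bigl(T_{q,t_p}(\tau_{ij}/\tau)-\tau_{ij}/\tau\bigr)$, which is the left-hand side of~\eqref{eq_det_tau_B} up to the factor $c_{ij}$. For the right-hand side I would expand by Cramer's rule, $(T_{q,t_p}(G)G^{-1})_{ij}=\sum_k T_{q,t_p}(G_{ik})\,(G^{-1})_{kj}$ with $(G^{-1})_{kj}=(-1)^{j+k}M_{jk}(G)/\det G$, $M_{jk}$ the $(j,k)$-minor. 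The numerator $\sum_k(-1)^{j+k}T_{q,t_p}(G_{ik})\,M_{jk}(G)$ is precisely the cofactor expansion along row $j$ of the matrix obtained from $G$ by replacing its $j$-th row by $T_{q,t_p}$ of its $i$-th row. Substituting $G_{kl}=a_kb_l\,\tilde\tau_{kl}/\tau$ and pulling the separable factors $a_k,b_l$ and the common $1/\tau$ (and $1/T_{q,t_p}(\tau)$ in the shifted row) out of every row and column converts this numerator into $\frac{1}{T_{q,t_p}(\tau)\,\tau^{N-1}}$ times exactly the determinant displayed in~\eqref{eq_det_tau_B}, up to the scalar $a_i\,a_j^{-1}\prod_k a_k\prod_l b_l$.

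There remains the denominator $\det G$, which must not be left as a determinant of $\tilde\tau$'s. I would instead evaluate it in closed form by chaining the connection relations~\eqref{eq_CF_Ys}: taking determinants from $k=0$ to $k=m+1$ and inserting the explicit determinant~\eqref{eq_det_connction_matrix} of each $\B$-matrix, together with the known $\det Y^{(0,1)}(x,t)$ from~\eqref{eq_det_Y_infty} and the diagonal $s$-twists, gives $\det G$ as an explicit product of $\vartheta$-functions in $\theta_\infty,\theta_0$, of powers of $q$, and of the $s$-variables. Feeding this closed form into the denominator, all fluctuating ($t$- and summation-) dependence cancels and the surviving scalar is $D_{ij}$. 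Collecting the prefactor $-q^{-N\sum_{k=1}^p\theta_k}t_p$, dividing by $c_{ij}$, and combining $a_i a_j^{-1}\prod_k a_k\prod_l b_l$, the value of $\det G$, and the residual power $q^{\theta_p}$ yields~\eqref{eq_det_tau_B}.

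The main obstacle is the constant bookkeeping, not the structure. Reducing the $G_q$-ratios that separate the shifted tau functions, and the closed form of $\det G$, down to the brackets $[1-\theta_\infty^{(i)}+\theta_\infty^{(j)}]$, $\prod_{k\neq j}[\theta_\infty^{(j)}-\theta_\infty^{(k)}]$ and the $\vartheta$-quotients in $D_{ij}$ requires repeated use of $G_q(u+1)=\Gamma_q(u)G_q(u)$, $\Gamma_q(u+1)=[u]\Gamma_q(u)$ and $\vartheta(u+1)=-\vartheta(u)$, under which the shifts $\theta_\infty\mapsto\theta_\infty-h_i+h_j$ and $\theta_0\mapsto\theta_0-h_j$ telescope these products down to finitely many factors. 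Pinning down the overall sign $(-1)^{(m+1)(N-1)+1}$ and the half-integer powers of $q$, and checking that the resulting $D_{ij}$ is genuinely independent of $t$ and of the shift index $p$, is the delicate part; everything else is a direct if lengthy computation, parallel to the $N=2$ case of~\cite{Jimbo-Nagoya-Sakai}.
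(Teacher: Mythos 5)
Your proposal follows essentially the same route as the paper's proof: it starts from the two expressions for $B_{p,0}(t)$ in~\eqref{eq_Bi0}, takes the off-diagonal $(i,j)$ entry so the scalar term drops out, substitutes the tau-function expressions for $(Y_1)_{ij}$ and $G_{ij}$ (the paper's~\eqref{eq_Y1},~\eqref{eq_G(t)}, whose separable row/column structure you correctly identified), expands $T_{q,t_p}(G)G^{-1}$ by cofactors to produce the row-replaced determinant, and inserts the closed-form evaluation of $\det G$ in terms of theta functions and $s$-variables. The only difference is one of presentation: the paper quotes the dictionary and the $\det G(t)^{-1}$ formula without derivation, whereas you sketch how to obtain them from the asymptotic expansions and the chained connection relations~\eqref{eq_CF_Ys} with~\eqref{eq_det_connction_matrix}, which is exactly how they arise.
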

\begin{proof}
By the relation~\eqref{eq_Bi0}, the $(i,j)$ entry of~$B_{p,0}(t)$ for $i\neq j$ satisfies
\begin{equation}\label{eq_prop_Bi0}
T_{q,t_p}\left(Y_1(t)_{ij}\right)-Y_1(t)_{ij}=-\frac{q^{-N\sum_{k=1}^{p}\theta_k}t_p}{\det G(t)}
\sum_{k=1}^NT_{q,t_p}\left(G(t)_{ik}\right)\widetilde{G(t)}_{jk},
\end{equation}
where $\widetilde{G(t)}$ is the cofactor matrix of~$G(t)$.

By the definition of~$Y_1(t)$ and $G(t)$ associated with $Y^{(0,1)}(x,t)$ and $Y^{(m+1,m+2)}(x,t)$,
we have for $i\neq j$
\begin{gather}
(Y_1(t))_{ij}=\frac{1}{\big[1-\theta_\infty^{(i)}+\theta_\infty^{(j)}\big]}
\prod_{k=1,\,k\neq i}^N\Gamma_q\big(\theta_\infty^{(i)}-\theta_\infty^{(k)}\big)
\prod_{k=1}^N\frac{1}{\Gamma_q\big(1+\theta_\infty^{(j)}-\theta_\infty^{(k)}\big)}
\,\frac{\tau_{ij}}{\tau},
\label{eq_Y1}
\\
G(t)_{ij}=q^{C_{ij}}\prod_{k=1}^{m+1}t_k^{\theta_k}
\prod_{k=1,\,k\neq i}^N\Gamma_q\big(\theta_\infty^{(i)}-\theta_\infty^{(k)}\big)
\prod_{k=1}^N\frac{1}{\Gamma_q\big(1+\theta_0^{(k)}-\theta_0^{(j)}\big)}
\, \frac{\tilde\tau_{ij}}{\tau},
\label{eq_G(t)}
\end{gather}
where\vspace{-.5ex}
\begin{gather*}
C_{ij}=\Delta_{\theta_0}-\Delta_{\theta_\infty}-\theta_0^{(j)}+\theta_\infty^{(i)}
-\frac{N}{2}\bigg(\sum_{k=1}^{m+1}\theta_k\bigg)^2
+\bigg(\frac{N}{2}-1-N\theta_0^{(j)}\bigg)\sum_{k=1}^{m+1}\theta_k.
\end{gather*}

Substituting~\eqref{eq_Y1} and~\eqref{eq_G(t)} into~\eqref{eq_prop_Bi0}
together with the expression
\begin{gather*}
\det G(t)^{-1}=(-1)^{(m+1)(N-1)}
q^{N\sum_{i=1}^{m+1}\theta_i(N\sum_{i=1}^{m+1}\theta_i+1)/2}
\\ \hphantom{\det G(t)^{-1}=}
{}\times\prod_{1\leq i<j\leq N}
\frac{\vartheta\big(\theta_\infty^{(i)}-\theta_\infty^{(j)}\big)}
{\vartheta\big(\theta_0^{(i)}-\theta_0^{(j)}\big)}
\prod_{k=0}^m s_k^{\sum_{i=1}^{N-1}(h_i-h_N)}t_{k+1}^{-N\theta_{k+1}}
\end{gather*}
of~$\det(G(t))$, we obtain the formula~\eqref{eq_det_tau_B}.
\end{proof}

\subsection{Schlesinger transformations}

Integer shifts on the characteristic exponents of Fuchsian systems preserving their monodromy are
called the Schlesinger transformations~\cite{Schlesinger}. In~\cite{Jimbo-Miwa-MPDII}, the authors
studied the Schlesinger trans\-for\-ma\-tions of monodromy preserving deformation including irregular
singular case and showed that the ratio $\tau'/\tau$ is a determinant of a matrix whose elements
are given in terms of the coefficients of fundamental solutions,
where $\tau$ is the Jimbo--Miwa--Ueno tau function
introduced in~\cite{Jimbo-Miwa-Ueno} and $\tau'$ is a Schlesinger transform of the original tau function. We~also mention a recent work~\cite{Ishikawa-Mano-Tsuda} where the authors presented
 another determinant formula for the ratio of~$\tau$-functions by using relations between Hermite's two approximation problems and particular Schlesinger transformation.

In~our case, the connection matrix
has periodicities~\eqref{eq_shifts_conncetion_matrix_sigma} and~\eqref{eq_shifts_conncetion_matrix_theta} with shifts of the parameters. This fact implies that
the following shifts of the parameters
\begin{gather*}
\theta_\infty\to \theta_\infty-h_i,\qquad
\theta_0\to \theta_0-h_j,\qquad
1\leq i,j\leq n,
\\
\theta_i\to\theta_i+\frac{1}{N},\qquad 1\leq i\leq m+1,
\end{gather*}
can be regarded as $q$-Schlesinger transformations and
yield $q$-difference
linear equations with rational coefficients satisfied
by $Y^{(k,k+1)}(x,t)$, $k=0,1,\dots,m+1$.

Let transformations $r_i$, $i=1,\dots,m+1$, and $p$
on the parameters be defined as follows.
\begin{gather*}
r_i(\theta_\infty)=\theta_\infty-h_1,\quad
r_i(\sigma_j)=\sigma_j-h_1,\quad j=1,\dots,i-1,\quad\ \
r_i(\sigma_j)=\sigma_j,\quad j=i,\dots,m,
\\
r_i(\theta_0)=\theta_0,\qquad
r_i(\theta_j)=\theta_j+\frac{1}{N}\delta_{i,j},\qquad
j=1,\dots,m+1,
\\
r_i(t_j)=t_j,\qquad j=1,\dots,i,\qquad
r_i(t_j)=qt_j,\qquad j=i+1,\dots,m+1,
\\
p(\theta_\infty)=\theta_\infty-h_1,\qquad
p(\sigma_j)=\sigma_j-h_1,\qquad j=1,\dots,m,\qquad
p(\theta_0)=\theta_0-h_1.
\end{gather*}
We show below that from the $q$-Schlesinger transformations $r_i$ and $p$, we obtain
determinantal identities for the tau functions.

\begin{Proposition}
The functions
\begin{gather*}
(-1)^{N\sum_{j=1}^k\theta_j}Y^{(k,k+1)}(x,t)x^{\theta_\infty^{(N)}},\qquad k=0,1,\dots, m+1,
\end{gather*}
satisfy the $q$-difference linear equation
\begin{gather*}
r_i(Y(x,t))=R_i(x,t)Y(x,t)
\end{gather*}
with
\begin{gather}
\det R_i(x,t)=x-q^{-N\sum_{j=1}^i\theta_j}t_i,\nonumber
\quad
R_i(x,t)=R_0x+R_{i,0}(t),\quad
R_0=\mathop{\rm diag}(1,0,\dots,0),\nonumber
\\
R_{i,0}(x,t)=r_i(Y_1(t))R_0+I-R_0-R_0Y_1(t)
=-r_i(G(t))G(t)^{-1}. \label{eq_R_{i,0}}
\end{gather}
\end{Proposition}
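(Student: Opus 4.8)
The plan is to mirror the construction of $A(x,t)$ and $B_i(x,t)$ in the preceding theorem, replacing the $q$-shifts $T_{q,x}$, $T_{q,t_i}$ by the parameter shift $r_i$. Write $\mathcal{Y}^{(k,k+1)}=(-1)^{N\sum_{j=1}^k\theta_j}Y^{(k,k+1)}(x,t)\,x^{\theta_\infty^{(N)}}$ for the normalized solutions and define $R_i(x,t)=r_i\big(\mathcal{Y}^{(0,1)}\big)\big(\mathcal{Y}^{(0,1)}\big)^{-1}$, which is the content of $r_i(Y)=R_iY$. Introduce the indicator $\chi(P)=1$ if $P$ holds and $\chi(P)=0$ otherwise. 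Since $r_i$ fixes the $s$-variables and $r_i(\theta_\infty^{(N)})=\theta_\infty^{(N)}+1/N$, the scalar prefactors combine into $R_i=(-1)^{\chi(i\le k)}x^{1/N}\,r_i\big(Y^{(k,k+1)}\big)\big(Y^{(k,k+1)}\big)^{-1}$ for each $k$. First I would show that this same rational matrix is produced from every piece $\mathcal{Y}^{(k,k+1)}$, $k=0,\dots,m+1$, so that $R_i$ extends to a single meromorphic — hence rational — function on $\mathbb{P}^1$.

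For this I would substitute the connection relations \eqref{eq_CF_Ys} into $r_i\big(\mathcal{Y}^{(k,k+1)}\big)\big(\mathcal{Y}^{(k,k+1)}\big)^{-1}$. The diagonal factor $\mathop{\rm diag}\big(s_k^{h_1-h_N},\dots,1\big)$ is $r_i$-invariant, and the argument of the connection matrix changes only by an integer power of $q$, which is immaterial by \eqref{eq_q-shift_on_x_connection_matrix}; so the whole step reduces to the identity $r_i(\B_k)=(-1)^{\chi(i=k+1)}\B_k$, where $\B_k$ is the connection matrix in \eqref{eq_CF_Ys} with parameters $\theta_{k+1}$, $1/N$, $\sigma_k-h_N$, $\sigma_{k+1}$. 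For $i\ne k+1$ the shifts act on $\sigma_k-h_N$ and $\sigma_{k+1}$ either trivially or simultaneously by $-h_1$, so \eqref{eq_shifts_conncetion_matrix_sigma} gives $r_i(\B_k)=\B_k$. For $i=k+1$ one instead has $\theta_{k+1}\mapsto\theta_{k+1}+1/N$ and $\sigma_k-h_N\mapsto\sigma_k-h_N-h_1$ with $\sigma_{k+1}$ fixed; using \eqref{eq_shifts_conncetion_matrix_sigma} to trade the $\sigma$-shift by $-h_1$ in the top slot for a $+h_1$ shift in the bottom slot, and then applying \eqref{eq_shifts_conncetion_matrix_theta}, yields the sign $-1$. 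Because $\chi(i\le k+1)=\chi(i\le k)+\chi(i=k+1)$, this sign exactly cancels the jump of the scalar prefactor across consecutive annuli, so $R_i$ agrees on all of them. I expect \emph{this verification to be the main obstacle}, since it is where the precise choice of shifts defining $r_i$ is forced and where the global sign is generated.

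Once $R_i$ is rational I would read off its shape from the asymptotics. Substituting \eqref{eq_aymptotic_behaviour_Y_infty}, the scalar $x^{1/N}$ cancels and the diagonal exponent matrices collapse, giving $R_i=r_i\big(\widehat{Y}^{(0,1)}\big)\big(R_0x+I-R_0\big)\big(\widehat{Y}^{(0,1)}\big)^{-1}$ with $\mathop{\rm diag}(x,1,\dots,1)=R_0x+I-R_0$ and $\widehat{Y}^{(0,1)}=I+Y_1x^{-1}+O(x^{-2})$. Expanding at $x=\infty$ produces the leading term $R_0x$ and the constant term $r_i(Y_1)R_0+I-R_0-R_0Y_1$. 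To conclude that $R_i$ is exactly affine I would argue as in the previous theorem: on the $k$-th annulus $R_i=r_i\big(\mathcal{Y}^{(k,k+1)}\big)\big(\mathcal{Y}^{(k,k+1)}\big)^{-1}$, so its only candidate poles are the zeros of $\det Y^{(k,k+1)}$, which are located through \eqref{eq_det_Y_0_q} and kept out of each annulus by \eqref{eq_auumption_theta} and \eqref{eq_assumption_poles_Y}; being entire with at most linear growth, $R_i=R_0x+R_{i,0}$. The determinant then follows from $\det R_i=r_i\big(\det\mathcal{Y}^{(0,1)}\big)/\det\mathcal{Y}^{(0,1)}$: inserting \eqref{eq_det_Y_infty} and using $r_i\big(x^{N\theta_\infty^{(N)}}\big)=x\cdot x^{N\theta_\infty^{(N)}}$, every infinite product telescopes except the single factor attached to $t_i$, leaving $\det R_i=x-q^{-N\sum_{j=1}^i\theta_j}t_i$.

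Finally, to obtain the second expression for $R_{i,0}$ I would evaluate $R_i$ at $x=0$ from the expansion at the origin. By \eqref{eq_aymptotic_behaviour_Y_0}, $Y^{(m+1,m+2)}=G(t)(I+O(x))\mathop{\rm diag}\big(x^{-\theta_0^{(\ell)}}\big)x^{-\sum_{\ell=1}^{m+1}\theta_\ell}$; since $r_i(\theta_0)=\theta_0$ and $\sum_{\ell=1}^{m+1}r_i(\theta_\ell)=\sum_{\ell=1}^{m+1}\theta_\ell+1/N$, in $R_i=(-1)^{\chi(i\le m+1)}x^{1/N}r_i\big(Y^{(m+1,m+2)}\big)\big(Y^{(m+1,m+2)}\big)^{-1}$ the powers $x^{\pm1/N}$ and the diagonal $\theta_0$-exponents all cancel, giving $R_i=-\,r_i(G(t))(I+O(x))G(t)^{-1}$, the overall minus sign being $(-1)^{\chi(i\le m+1)}=-1$. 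Setting $x=0$ in $R_i=R_0x+R_{i,0}$ then yields $R_{i,0}=-r_i(G(t))G(t)^{-1}$, which matches the other form $r_i(Y_1)R_0+I-R_0-R_0Y_1$ obtained from the expansion at infinity and completes the proof.
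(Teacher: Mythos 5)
Your proposal is correct in structure and follows essentially the same route as the paper's own proof: define $R_i$ from $Y^{(0,1)}$ (equivalently from the normalized $\mathcal{Y}^{(0,1)}$), use the connection relations~\eqref{eq_CF_Ys} together with the periodicities~\eqref{eq_q-shift_on_x_connection_matrix},~\eqref{eq_shifts_conncetion_matrix_sigma},~\eqref{eq_shifts_conncetion_matrix_theta} to show that every $Y^{(k,k+1)}$ produces the same matrix up to a sign ($+1$ for $k<i$, $-1$ for $k\ge i$) which the prefactor $(-1)^{N\sum_{j=1}^k\theta_j}$ absorbs, conclude rationality, compute the determinant from~\eqref{eq_det_Y_infty}, and read off the affine form from the expansions at $\infty$ and $0$. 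In fact you supply exactly the verifications the paper leaves implicit (the sign bookkeeping $r_i(\B_k)=-\B_k$ only for $i=k+1$, and both asymptotic computations), and these are carried out correctly.

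One caveat concerns the determinant step. Carrying out the telescoping you describe literally yields $\det R_i(x,t)=x-q^{-1-N\sum_{j=1}^i\theta_j}t_i$ rather than $x-q^{-N\sum_{j=1}^i\theta_j}t_i$: under $r_i$ the only factor of~\eqref{eq_det_Y_infty} that changes is the numerator $\big(q^{-N\sum_{j=1}^{i}\theta_j}t_{i}/x;q\big)_\infty$, whose argument is multiplied by $q^{-1}$ (since $\sum_{j=1}^i\theta_j\mapsto\sum_{j=1}^i\theta_j+1/N$ while $r_i(t_i)=t_i$), and $\big(q^{-1}a;q\big)_\infty=\big(1-q^{-1}a\big)(a;q)_\infty$; multiplying by the factor $x$ coming from $r_i\big(x^{N\theta_\infty^{(N)}}\big)$ gives the extra $q^{-1}$ in the exponent. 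The same extra power of $q$ follows independently from $\det R_{i,0}=(-1)^{N}r_i(\det G(t))/\det G(t)$ using the paper's explicit formula for $\det G(t)^{-1}$, so this appears to be a typo in the Proposition as stated (or in the stated action of $r_i$ on the $t_j$) rather than a flaw in your method; still, your assertion that the telescoping ``leaves'' the stated formula is not what the computation actually produces, and in a referee-proof write-up you would need to flag and resolve this mismatch.
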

\begin{proof}
Put
\begin{gather*}
R_i(x,t)=r_i\big(Y^{(0,1)}(x,t)\big)Y^{(0,1)}(x,t)^{-1}x^{1/N}.
\end{gather*}
Then we have
\begin{gather*}
R_i(x,t)=\begin{cases} r_i\big(Y^{(k,k+1)}(x,t)\big)Y^{(k,k+1)}(x,t)^{-1}x^{1/N}, & k=1,\dots,i-1,
\\ -r_i\big(Y^{(k,k+1)}(x,t)\big)Y^{(k,k+1)}(x,t)^{-1}x^{1/N}, & k=i,\dots,m+1,
\end{cases}
\end{gather*}
due to the connection relations~\eqref{eq_CF_Ys} and the periodicities~\eqref{eq_shifts_conncetion_matrix_sigma},
\eqref{eq_shifts_conncetion_matrix_theta} of the connection matrix.

{\sloppy
By the definition of~$q$-conformal blocks,
$R_i(x,t)$ is a meromorphic function on $\mathbb{P}^1$,
and hence it is a rational function of~$x$.
The determinant of~$R_i(x,t)$ is computed from
$\det Y^{(0,1)}(x,t)$~\eqref{eq_det_Y_infty}. Then,
the form of~$R_i(x,t)$ is obtained from the asymptotic
expansions of~$Y^{(0,1)}(x,t)$ and \linebreak $Y^{(m+1,m+2)}(x,t)$.
}
\end{proof}

Since we can express $R_{i,0}(x,t)$ in two different way~\eqref{eq_R_{i,0}}, we obtain
the following theorem.
\begin{Theorem}\label{thm det tau r}
Let $1\le i\leq m+1$.
\begin{enumerate}\itemsep=0pt
\item[$1.$] For $2\leq a,b\leq N$,
we have\vspace{-1ex}
\begin{gather*}
t_i^{1/N}q^{\theta_\infty^{(1)}-\sum_{j=1}^i\theta_j}
\frac{D_{ab}}{r_i(\tau)\tau^{N-1}}
\left| \begin{matrix}
\tilde \tau_{11}&\tilde \tau_{12}&\cdots&\tilde \tau_{1N}
\\
\vdots&\vdots&&\vdots
\\
\tilde \tau_{b-1 1}&\tilde \tau_{b-1 2}&\cdots&
\tilde \tau_{b-1 N}
\\[.5ex]
q^{-\theta_0^{(1)}}r_i\left(\tilde \tau_{a1}\right)
&q^{-\theta_0^{(2)}}r_i\left(\tilde\tau_{a2}\right)&\cdots&
q^{-\theta_0^{(N)}}r_i\left(\tilde \tau_{aN}\right)
\\
\tilde \tau_{b+1 1}&\tilde \tau_{b+1 2}&\cdots&
\tilde \tau_{b+1 N}
\\
\vdots&\vdots&&\vdots
\\
\tilde \tau_{N1}&\tilde \tau_{N2}&\cdots&\tilde \tau_{NN}
\end{matrix}\right|
\\[1ex] \qquad
{}=-\delta_{ab}\frac{\prod_{j=1}^N\Gamma_q\big(1+\theta_\infty^{(b)}-\theta_\infty^{(j)}\big)}
{\prod_{j=1,\,j\neq a}^N\Gamma_q\big(\theta_\infty^{(a)}-\theta_\infty^{(j)}\big)}
\frac{\big[1-\theta_\infty^{(a)}+\theta_\infty^{(b)}\big]}
{\big[\theta_\infty^{(a)}-\theta_\infty^{(1)}\big]}.
\end{gather*}

\item[$2.$] For $2\leq b\leq N$, we have\vspace{-1ex}
\begin{gather*}
t_i^{1/N}q^{\theta_\infty^{(1)}-1-\sum_{j=1}^i\theta_j}
\frac{D_{1b}}{r_i(\tau)\tau^{N-1}}
\left| \begin{matrix}
\tilde \tau_{11}&\tilde \tau_{12}&\cdots&\tilde \tau_{1N}
\\
\vdots&\vdots&&\vdots
\\
\tilde \tau_{b-1 1}&\tilde \tau_{b-1 2}&\cdots&
\tilde \tau_{b-1 N}
\\[.5ex]
q^{-\theta_0^{(1)}}r_i\left(\tilde \tau_{11}\right)
&q^{-\theta_0^{(2)}}r_i\left(\tilde\tau_{12}\right)&\cdots&
q^{-\theta_0^{(N)}}r_i\left(\tilde \tau_{1N}\right)
\\
\tilde \tau_{b+1 1}&\tilde \tau_{b+1 2}&\cdots&
\tilde \tau_{b+1 N}
\\
\vdots&\vdots&&\vdots
\\
\tilde \tau_{N1}&\tilde \tau_{N2}&\cdots&\tilde \tau_{NN}
\end{matrix}\right|
\\ \qquad
{}=\frac{\tau_{1b}}{\tau}\prod_{j=2}^N\big[\theta_\infty^{(1)}-\theta_\infty^{(j)}-1\big].
\end{gather*}

\item[$3.$] For $2\leq a\leq N$, we have
\begin{gather*}
t_i^{1/N}q^{\theta_\infty^{(1)}-\sum_{j=1}^i\theta_j}
\frac{D_{a1}}{r_i(\tau)\tau^{N-1}}
\left| \begin{matrix}
q^{-\theta_0^{(1)}}r_i\left(\tilde \tau_{a1}\right)
&q^{-\theta_0^{(2)}}r_i\left(\tilde\tau_{a2}\right)&\cdots&
q^{-\theta_0^{(N)}}r_i\left(\tilde \tau_{aN}\right)
\\
\tilde \tau_{2 1}&\tilde \tau_{2 2}&\cdots&
\tilde \tau_{2 N}
\\
\vdots&\vdots&&\vdots
\\
\tilde \tau_{N1}&\tilde \tau_{N2}&\cdots&\tilde \tau_{NN}
\end{matrix}\right|
\\ \qquad
{}=-r_i\bigg(\frac{\tau_{a1}}{\tau}\bigg)
\big[1-\theta_\infty^{(a)}+\theta_\infty^{(1)}\big]
\prod_{j=2,\,j\neq a}^N\big[\theta_\infty^{(1)}-\theta_\infty^{(j)}\big].
\end{gather*}
\end{enumerate}
\end{Theorem}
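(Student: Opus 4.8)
The plan is to proceed exactly as in the derivation of~\eqref{eq_det_tau_B}, now equating the two expressions for $R_{i,0}(x,t)$ furnished by~\eqref{eq_R_{i,0}},
\begin{gather*}
r_i(Y_1(t))R_0+I-R_0-R_0Y_1(t)=-r_i(G(t))G(t)^{-1}.
\end{gather*}
With $R_0=\mathop{\rm diag}(1,0,\dots,0)$ the $(a,b)$ entry of the left-hand side is
\begin{gather*}
(R_{i,0})_{ab}=r_i(Y_1(t))_{a1}\,\delta_{b1}+\delta_{ab}(1-\delta_{a1})-\delta_{a1}\,(Y_1(t))_{1b},
\end{gather*}
which collapses to $\delta_{ab}$ when $2\le a,b\le N$, to $-(Y_1(t))_{1b}$ when $a=1$ and $2\le b\le N$, and to $r_i(Y_1(t))_{a1}$ when $2\le a\le N$ and $b=1$. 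These three reductions are precisely the three assertions of the theorem.

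For the right-hand side I would use $(G(t)^{-1})_{kb}=\widetilde{G(t)}_{bk}/\det G(t)$ together with Laplace expansion to write
\begin{gather*}
-\big(r_i(G(t))G(t)^{-1}\big)_{ab}=-\frac{1}{\det G(t)}\sum_{k=1}^{N}r_i(G(t))_{ak}\,\widetilde{G(t)}_{bk},
\end{gather*}
the sum being the determinant of $G(t)$ with its $b$-th row replaced by the $a$-th row of $r_i(G(t))$. Substituting~\eqref{eq_G(t)} and pulling the column-dependent factor out of each column and the row-dependent factor out of each row leaves precisely the $N\times N$ determinant of the $\tilde\tau_{kj}$ appearing in the statement. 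The key point is that $r_i$ fixes $\theta_0$ but shifts $\sum_k\theta_k\mapsto\sum_k\theta_k+\tfrac1N$, so the $\theta_0^{(j)}$-dependent column factor in~\eqref{eq_G(t)} is multiplied under $r_i$ by exactly $q^{-\theta_0^{(j)}}$; this is the source of the factors $q^{-\theta_0^{(j)}}$ attached to the replaced row in each determinant.

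It then remains to substitute, for Cases~2 and~3, the tau-function form~\eqref{eq_Y1} of $(Y_1(t))_{1b}$ and its $r_i$-image for $r_i(Y_1(t))_{a1}$, while in Case~1 the left-hand side is simply $\delta_{ab}$; in particular, for $2\le a\ne b\le N$ the vanishing of $(R_{i,0})_{ab}$ forces the corresponding determinant to vanish, which accounts for the factor $\delta_{ab}$ on the right. Collecting the surviving prefactors — the global power $\prod_k t_k^{\theta_k}q^{C_{kj}}$ from~\eqref{eq_G(t)}, the extracted $\Gamma_q$ row factors, the explicit form of $\det G(t)^{-1}$, and the $r_i$-shifts of all of these (which generate the explicit $t_i^{1/N}q^{\theta_\infty^{(1)}-\sum_{j=1}^{i}\theta_j}$ on the left) — and simplifying by means of $\Gamma_q(u+1)=[u]\Gamma_q(u)$ together with the relations among $\Gamma_q$, $\vartheta$ and $[\,\cdot\,]$, one matches both sides and reads off $D_{ab}$.

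The main obstacle will be this last bookkeeping: faithfully tracking every $q$-power, $t$-power and $\Gamma_q$ factor through the transformation $r_i$ and reducing the resulting products to the compact constant $D_{ab}$ and the $\Gamma_q/[\,\cdot\,]$ ratios on the right. The computation is of the same mechanical nature as the one behind~\eqref{eq_det_tau_B}; the genuinely new feature demanding care is the presence of $r_i$ in place of $T_{q,t_p}$, and hence the appearance of the $q^{-\theta_0^{(j)}}$ factors in the replaced row of each determinant.
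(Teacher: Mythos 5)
Your proposal is correct and follows essentially the same route as the paper: the paper's (very terse) proof is precisely to equate the two expressions for $R_{i,0}$ in~\eqref{eq_R_{i,0}}, read off the three cases of the $(a,b)$ entry of $r_i(Y_1(t))R_0+I-R_0-R_0Y_1(t)$, expand $-\big(r_i(G(t))G(t)^{-1}\big)_{ab}$ by cofactors, and substitute the tau-function expressions~\eqref{eq_Y1},~\eqref{eq_G(t)} and the formula for $\det G(t)^{-1}$, exactly as in the derivation of~\eqref{eq_det_tau_B}. Your identification of the column-dependent factor $q^{-\theta_0^{(j)}}$ as the only $j$-dependent effect of $r_i$ on the prefactor in~\eqref{eq_G(t)} is the right key observation for the replaced row.
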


Similarly to the $q$-Schlesinger transformation $r_i$, we have the following proposition and theorem
on the $q$-Schlesinger transformation $p$.
\begin{Proposition}
The functions $Y^{(k,k+1)}(x,t)$, $k=0,1,\dots,m+1$,
satisfy the $q$-difference linear equation
\begin{gather*}
p\left(Y(x,t)\right)=P(x,t)Y(x,t)x^{-1/N}
\end{gather*}
with
\begin{gather*}
\det P(x,t)=x,\qquad
P(x,t)=P_0x+P_{i,0}(t),\qquad
P_0=\mathop{\rm diag}(1,0,\dots,0),
\\
P_{i,0}(x,t)=p(Y_1(t))P_0+I-P_0-P_0Y_1(t)=p(G(t))G(t)^{-1}.
\end{gather*}
\end{Proposition}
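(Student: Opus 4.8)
The plan is to follow verbatim the strategy used for the transformation $r_i$ in the previous proposition. First I would set
\begin{gather*}
P(x,t)=p\big(Y^{(0,1)}(x,t)\big)\big(Y^{(0,1)}(x,t)\big)^{-1}x^{1/N},
\end{gather*}
so that $p\big(Y^{(0,1)}\big)=P\,Y^{(0,1)}x^{-1/N}$ holds tautologically near $x=\infty$. The goal is then to prove that $P$ is rational of the asserted form and that the same matrix $P$ intertwines $p\big(Y^{(k,k+1)}\big)$ and $Y^{(k,k+1)}$ for every $k$.

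The decisive simplification compared with the $r_i$ case is that $p$ fixes every $\theta_j$ ($j=1,\dots,m+1$) and shifts every $\sigma_\ell$ (including $\sigma_0=\theta_\infty+h_N$ and $\sigma_{m+1}=\theta_0$) by the same vector $-h_1$. Hence in each connection matrix $\B\big[\begin{smallmatrix}\theta_{k+1}&\frac1N\\ \sigma_k-h_N&\sigma_{k+1}\end{smallmatrix}\big]$ appearing in~\eqref{eq_CF_Ys}, both $\sigma_k$ and $\sigma_{k+1}$ are shifted by $-h_1$ while $\theta_{k+1}$ is untouched, so by the periodicity~\eqref{eq_shifts_conncetion_matrix_sigma} every connection matrix is invariant under $p$. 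Applying $p$ to~\eqref{eq_CF_Ys} and cancelling the (unchanged) connection matrix together with the (unchanged) diagonal $s$-factor, I obtain
\begin{gather*}
P(x,t)=p\big(Y^{(k,k+1)}(x,t)\big)\big(Y^{(k,k+1)}(x,t)\big)^{-1}x^{1/N},\qquad k=0,1,\dots,m+1,
\end{gather*}
with no sign alternation --- in contrast to $r_i$, where~\eqref{eq_shifts_conncetion_matrix_theta} produced the factor $-1$ for $k\ge i$. Since each $\widehat{Y}^{(k,k+1)}$ is holomorphic in its annulus, these local expressions glue to a single meromorphic function on $\mathbb{P}^1$, which is therefore rational in $x$.

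Next I would pin down the determinant and the explicit shape. Because $p$ leaves $\theta_1,\dots,\theta_{m+1}$ and $t_1,\dots,t_{m+1}$ fixed, the right-hand side of~\eqref{eq_det_Y_infty} is $p$-invariant, so $\det p\big(Y^{(0,1)}\big)=\det Y^{(0,1)}$ and consequently $\det P=\big(x^{1/N}\big)^N=x$. For the matrix form I read off the two asymptotic regimes. Near $x=\infty$, using~\eqref{eq_aymptotic_behaviour_Y_infty} and $p(\theta_\infty)=\theta_\infty-h_1$, the twist $\mathop{\rm diag}\big(x^{-p(\theta_\infty^{(k)})+\theta_\infty^{(k)}}\big)x^{1/N}=\mathop{\rm diag}(x,1,\dots,1)=P_0x+I-P_0$ appears, and collecting the $x^1$ and $x^0$ terms of $(I+p(Y_1)x^{-1}+\cdots)(P_0x+I-P_0)(I-Y_1x^{-1}+\cdots)$ yields $P=P_0x+\big(p(Y_1)P_0+I-P_0-P_0Y_1\big)+O\big(x^{-1}\big)$. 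Since $\det P=x$ has no poles and $P$ is rational, $P$ is a degree-one polynomial, so the tail vanishes and the first expression for $P_{i,0}$ follows. Evaluating instead the $k=m+1$ representative near $x=0$ via~\eqref{eq_aymptotic_behaviour_Y_0} and $p(\theta_0)=\theta_0-h_1$ (with $\sum_i\theta_i$ unchanged), the same twist $\mathop{\rm diag}(x,1,\dots,1)$ is produced, and the leading value at $x=0$ of $p(G)(I+O(x))(P_0x+I-P_0)(I+O(x))G^{-1}$ gives the second, $G$-based expression for the constant term $P_{i,0}$.

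The main obstacle is the bookkeeping that establishes $P$ is a genuine degree-one polynomial rather than merely rational: one must check, exactly as in the $r_i$ proof, that the candidate poles of $P$ at the zeros of $\det Y^{(k,k+1)}$ are spurious --- they are absent because $P$ admits the holomorphic representative $p\big(Y^{(k,k+1)}\big)\big(Y^{(k,k+1)}\big)^{-1}x^{1/N}$ on each annulus --- and that $P$ is holomorphic at $x=0$. Once rationality and the degree-one bound are secured, matching the $x=\infty$ and $x=0$ expansions to equate the two expressions for $P_{i,0}$ is routine; here one must be attentive to the $x^{-1/N}$ twist, which is precisely what converts the shifts $p(\theta_\infty)=\theta_\infty-h_1$ and $p(\theta_0)=\theta_0-h_1$ into the rank-one leading matrix $P_0=\mathop{\rm diag}(1,0,\dots,0)$.
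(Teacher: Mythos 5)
Your overall strategy is exactly the one the paper intends: the paper gives no separate proof of this proposition, only the remark that it goes ``similarly to'' the $r_i$ case, and your gluing argument --- every connection matrix in~\eqref{eq_CF_Ys} is invariant under $p$ by~\eqref{eq_shifts_conncetion_matrix_sigma}, since all $\sigma_\ell$ (including $\sigma_0=\theta_\infty+h_N$ and $\sigma_{m+1}=\theta_0$) shift by $-h_1$ while the $\theta_j$, $t_j$, $s$ are fixed, hence no sign alternation, hence a single rational $P$ with $\det P=x$ read off from~\eqref{eq_det_Y_infty} --- is correct and is precisely the point that distinguishes $p$ from $r_i$.

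The gap is at the final step, the identification of the constant term from the expansion at $x=0$. Your computation there is set up correctly, but its output is not what you claim. Because $p(\theta_0)=\theta_0-h_1$ (unlike $r_i$, which fixes $\theta_0$), the twist $\mathop{\rm diag}(x,1,\dots,1)=P_0x+I-P_0$ reappears at the origin, and the value at $x=0$ of
\begin{gather*}
p(G(t))(I+O(x))(P_0x+I-P_0)(I+O(x))G(t)^{-1}
\end{gather*}
is $p(G(t))(I-P_0)G(t)^{-1}$, \emph{not} $p(G(t))G(t)^{-1}$: the factor $I-P_0$ does not disappear. In fact the formula as stated in the proposition cannot hold: $\det P=x$ forces $\det P_{i,0}=\det P(0)=0$, whereas $\det\big(p(G)G^{-1}\big)=\det p(G)/\det G=1$, since the paper's explicit formula for $\det G(t)^{-1}$ is $p$-invariant (the sign $(-1)^{N-1}$ produced by $\vartheta(u-1)=-\vartheta(u)$ in the $\theta_\infty$-products cancels against the identical sign from the $\theta_0$-products). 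So the correct second expression is $P_{i,0}=p(G)(I-P_0)G^{-1}$; this is what the subsequent theorem actually uses --- the replaced row in each of its determinants has first entry $0$, which is exactly the vanishing first column of $p(G)(I-P_0)$, in contrast to the $r_i$ theorem where no entry vanishes. As written, your proof asserts that the $x\to0$ evaluation ``gives the second, $G$-based expression'' while the computation you set up, carried out honestly, contradicts it; you must either flag the statement's second equality as a typo and prove the corrected identity, or the argument fails at this point. A lesser remark: the inference ``$\det P=x$ has no poles and $P$ is rational, so $P$ is a degree-one polynomial'' is by itself a non sequitur (a matrix with polynomial determinant can still have poles); you do acknowledge the needed pole bookkeeping in your last paragraph, and the paper is equally terse on this, so it is not the main defect.
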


\begin{Theorem}\quad
\begin{enumerate}\itemsep=0pt
\item[$1.$] For $2\leq a,b\leq N$,
we have
\begin{gather*}
q^{\theta_\infty^{(1)}-\theta_0^{(1)}-\sum_{j=1}^N\theta_j}
\frac{D_{ab}}{p(\tau)\tau^{N-1}}
\left| \begin{matrix}
\tilde \tau_{11}&\tilde \tau_{12}&\cdots&\tilde \tau_{1N}
\\
\vdots&\vdots&&\vdots
\\
\tilde \tau_{b-1 1}&\tilde \tau_{b-1 2}&\cdots&
\tilde \tau_{b-1 N}
\\[.5ex]
0
&\big[\theta_0^{(1)}-\theta_0^{(2)}\big]p\big(\tilde\tau_{a2}\big)&\cdots&
\big[\theta_0^{(1)}-\theta_0^{(N)}\big]p\big(\tilde \tau_{aN}\big)
\\
\tilde \tau_{b+1 1}&\tilde \tau_{b+1 2}&\cdots&
\tilde \tau_{b+1 N}
\\
\vdots&\vdots&&\vdots
\\
\tilde \tau_{N1}&\tilde \tau_{N2}&\cdots&\tilde \tau_{NN}
\end{matrix}\right|
\\ \qquad
{}=\delta_{ab}\frac{\prod_{j=1}^N\Gamma_q\big(1+\theta_\infty^{(b)}-\theta_\infty^{(j)}\big)}
{\prod_{j=1,\,j\neq a}^N\Gamma_q\big(\theta_\infty^{(a)}-\theta_\infty^{(j)}\big)} \frac{\big[1-\theta_\infty^{(a)}+\theta_\infty^{(b)}\big]} {\big[\theta_\infty^{(a)}-\theta_\infty^{(1)}\big]}.
\end{gather*}

\item[$2.$] For $2\leq b\leq N$, we have
\begin{gather*}
q^{\theta_\infty^{(1)}-\theta_0^{(1)}-1-\sum_{j=1}^N
\theta_j}
\frac{D_{1b}}{p(\tau)\tau^{N-1}}
\left| \begin{matrix}
\tilde \tau_{11}&\tilde \tau_{12}&\cdots&\tilde \tau_{1N}
\\
\vdots&\vdots&&\vdots
\\
\tilde \tau_{b-1 1}&\tilde \tau_{b-1 2}&\cdots&
\tilde \tau_{b-1 N}
\\
0
&\big[\theta_0^{(1)}-\theta_0^{(2)}\big]p\left(\tilde\tau_{12}\right)&\cdots&
\big[\theta_0^{(1)}-\theta_0^{(N)}\big]p\left(\tilde \tau_{1N}\right)
\\
\tilde \tau_{b+1 1}&\tilde \tau_{b+1 2}&\cdots&
\tilde \tau_{b+1 N}
\\
\vdots&\vdots&&\vdots
\\
\tilde \tau_{N1}&\tilde \tau_{N2}&\cdots&\tilde \tau_{NN}
\end{matrix}\right|
\\ \qquad
{}=-\frac{\tau_{1b}}{\tau}\prod_{j=2}^N\big[\theta_\infty^{(1)}-\theta_\infty^{(j)}-1\big].
\end{gather*}

\item[$3.$] For $2\leq a\leq N$, we have\vspace{-1ex}
\begin{gather*}
q^{\theta_\infty^{(1)}-\theta_0^{(1)}-\sum_{j=1}^N
\theta_j}
\frac{D_{a1}}{p(\tau)\tau^{N-1}}\renewcommand*{\arraystretch}{0.9}
\left| \begin{matrix}
0
&\big[\theta_0^{(1)}-\theta_0^{(2)}\big]p\left(\tilde\tau_{a2}\right)&\cdots&
\big[\theta_0^{(1)}-\theta_0^{(N)}\big]p\left(\tilde \tau_{aN}\right)
\\
\tilde \tau_{2 1}&\tilde \tau_{2 2}&\cdots&\tilde \tau_{2 N}
\\
\vdots&\vdots&&\vdots
\\
\tilde \tau_{N1}&\tilde \tau_{N2}&\cdots&\tilde \tau_{NN}
\end{matrix}\right|
\\ \qquad
{}=p\bigg(\frac{\tau_{a1}}{\tau}\bigg)
\big[1-\theta_\infty^{(a)}+\theta_\infty^{(1)}\big]
\prod_{j=2,\,j\neq a}^N\big[\theta_\infty^{(1)}-\theta_\infty^{(j)}\big].
\end{gather*}
\end{enumerate}
\end{Theorem}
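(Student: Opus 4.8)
The plan is to derive the three identities exactly as Theorem~\ref{thm det tau r} is obtained, but starting from the $p$-version of the Schlesinger data rather than the $r_i$-version. The preceding Proposition gives the single matrix relation
\[
P_{i,0}(t)=p(Y_1(t))P_0+I-P_0-P_0Y_1(t)=p(G(t))G(t)^{-1},
\]
with $P_0=\mathrm{diag}(1,0,\dots,0)=E_{11}$, and everything follows by reading this off entry by entry. Since $P_0=E_{11}$, the left-hand expression is explicit: $(P_{i,0})_{ab}=\delta_{ab}$ for $a,b\ge 2$, $(P_{i,0})_{1b}=-\,(Y_1)_{1b}$ for $a=1,\ b\ge 2$, and $(P_{i,0})_{a1}=p\big((Y_1)_{a1}\big)$ for $a\ge 2,\ b=1$. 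These three cases are precisely Parts~1,~2 and~3 of the theorem.

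First I would rewrite the right-hand expression via Cramer's rule: from $P_{i,0}G=p(G)$, using $(G^{-1})_{cb}=(-1)^{b+c}M_{bc}/\det G$, one gets
\[
(P_{i,0})_{ab}\,\det G=\sum_{c=1}^N p(G)_{ac}\,(-1)^{b+c}M_{bc}=\det\big(G^{[b\to p(G)_a]}\big),
\]
the determinant of $G$ with its $b$-th row replaced by the $a$-th row of $p(G)$. This is the source of the determinantal form in the statement. Next I would substitute the tau-function expressions~\eqref{eq_Y1} for $Y_1$ and~\eqref{eq_G(t)} for both $G$ and $p(G)$. The key structural fact is that, by~\eqref{eq_G(t)} and the splitting of $C_{ij}$, each $G_{ij}$ factors as (row-$i$ factor)$\,\times\,$(column-$j$ factor)$\,\times\,\tilde\tau_{ij}/\tau$, where the row factors are the $q^{\theta_\infty^{(i)}}$ and $\prod_{k\ne i}\Gamma_q(\theta_\infty^{(i)}-\theta_\infty^{(k)})$ terms and the column factors are the $q^{-\theta_0^{(j)}}$ and $\prod_k\Gamma_q(1+\theta_0^{(k)}-\theta_0^{(j)})^{-1}$ terms. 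Pulling these out of every column and every unreplaced row reduces $\det\big(G^{[b\to p(G)_a]}\big)$ to the $\tilde\tau$-determinant of the statement, while the extracted prefactors together with the explicit value of $\det G$ (computed in the proof of the earlier Proposition as a ratio of theta functions) assemble into the constant $D_{ab}$ and the $\Gamma_q$/$[\,\cdot\,]$ factors on the right.

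The one genuinely $p$-specific point is the replaced row. Because $p$ fixes each $\theta_k$ but sends $\theta_0\to\theta_0-h_1$ and $\theta_\infty\to\theta_\infty-h_1$, the $\theta_0$-dependent $\Gamma_q$-factor of $p(G)_{aj}$ differs from that of $G_{\,\cdot\, j}$ by a ratio that collapses, via $\Gamma_q(u+1)=[u]\Gamma_q(u)$, to the coefficient $[\theta_0^{(1)}-\theta_0^{(j)}]$; in particular this coefficient is $[0]=0$ in the first column, which is why the replaced row begins with a zero. This is the $p$-analog of the $q^{-\theta_0^{(j)}}$ factor in the replaced row of Theorem~\ref{thm det tau r}, the difference tracing back to $r_i(\theta_0)=\theta_0$ versus $p(\theta_0)=\theta_0-h_1$. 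Finally, for Parts~2 and~3 I would feed the left-hand values $-(Y_1)_{1b}$ and $p\big((Y_1)_{a1}\big)$ through~\eqref{eq_Y1} to produce the $\tau_{1b}/\tau$ and $p(\tau_{a1}/\tau)$ on the right.

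I expect the main obstacle to be purely computational bookkeeping: correctly tracking the $q$-power prefactors $q^{C_{ij}}$ and the $\Gamma_q$-products as they are pulled out of the determinant and merged with $\det G$, and verifying that the accumulated factors collapse to exactly $D_{ab}$, the stated $\Gamma_q$-ratios and the $[\,\cdot\,]$-factors, in particular reconciling the first-column behaviour of the replaced row. Because the computation runs entirely parallel to Theorem~\ref{thm det tau r}, no new idea is required beyond substituting $p$ for $r_i$ and retaining the sign $+p(G)G^{-1}$ in place of $-r_i(G)G^{-1}$.
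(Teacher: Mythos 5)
Your overall strategy is indeed the one the paper intends: read off the entries of $P_{i,0}$ from $p(Y_1)P_0+I-P_0-P_0Y_1$ with $P_0=E_{11}$, turn the entries of the $G$-side expression into row-replaced determinants by Cramer's rule, and substitute~\eqref{eq_Y1},~\eqref{eq_G(t)} and the formula for $\det G(t)$ (this is exactly how the $r_i$-analogue, Theorem~\ref{thm det tau r}, is obtained, and the paper declares the $p$ case ``similar''). However, the one step where the $p$ case genuinely differs from the $r_i$ case --- the zero in the first column of the replaced row --- is wrong as you argue it. You claim the coefficient of $p(\tilde\tau_{aj})$ is $\big[\theta_0^{(1)}-\theta_0^{(j)}\big]$ uniformly in $j$, hence $[0]=0$ at $j=1$. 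Compute the ratio of column factors honestly: under $p$ one has $p\big(\theta_0^{(1)}\big)=\theta_0^{(1)}-1+\tfrac1N$ and $p\big(\theta_0^{(k)}\big)=\theta_0^{(k)}+\tfrac1N$ for $k\geq2$. For $j\geq2$ only the $k=1$ factor of $\prod_{k}\Gamma_q\big(1+\theta_0^{(k)}-\theta_0^{(j)}\big)^{-1}$ shifts, producing $\Gamma_q\big(\theta_0^{(1)}-\theta_0^{(j)}\big)^{-1}$ and hence the factor $\big[\theta_0^{(1)}-\theta_0^{(j)}\big]$; but for $j=1$ the $k=1$ factor is $\Gamma_q(1)$ both before and after, and it is the $k\geq2$ factors that shift, giving the ratio $\prod_{k\geq2}\big[1+\theta_0^{(k)}-\theta_0^{(1)}\big]^{-1}$, which is neither zero nor of the claimed form. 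So if $P_{i,0}G=p(G)$ were a genuine matrix identity, Cramer's rule would force a replaced row whose first entry is a \emph{nonzero} multiple of $p(\tilde\tau_{a1})$, and your determinant would differ from the theorem's by that entry times its $(b,1)$ cofactor; carried out as written, the computation does not reproduce the statement.

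The missing idea is that the displayed equality $P_{i,0}=p(G)G^{-1}$ in the Proposition cannot be taken at face value, and the zero comes from linear algebra at $x=0$, not from a $\Gamma_q$ collapse. Since $p$ shifts $\theta_0\to\theta_0-h_1$, the exponent of the \emph{first} column of $Y^{(m+1,m+2)}(x,t)$ at $x=0$ jumps by $1$: writing $Y^{(m+1,m+2)}=\widehat{Y}\mathop{\rm diag}\big(x^{-\theta_0^{(j)}}\big)x^{-\sum\theta_i}$ and using $h_1^{(j)}+\tfrac1N=\delta_{j1}$, the relation $p(Y)=P(x,t)Yx^{-1/N}$ becomes $p\big(\widehat{Y}\big)(x)\mathop{\rm diag}(x,1,\dots,1)=(P_0x+P_{i,0})\widehat{Y}(x)$, whose constant term is
\begin{gather*}
P_{i,0}\,G=p(G)\,(I-E_{11}),
\end{gather*}
i.e., the first column of $P_{i,0}G$ vanishes identically and only columns $j\geq2$ reproduce those of $p(G)$. (That the literal equality $P_{i,0}=p(G)G^{-1}$ is untenable is also visible from determinants: $\det P(x,t)=x$ forces $\det P_{i,0}=0$, whereas $\det\big(p(G)G^{-1}\big)\neq0$.) It is this column vanishing that places the $0$ in the replaced row: Cramer's rule now gives $(P_{i,0})_{ab}\det G$ equal to the determinant of $G$ with row $b$ replaced by $\big(0,p(G)_{a2},\dots,p(G)_{aN}\big)$, and from there your factor extraction for columns $j\geq2$ (yielding $\big[\theta_0^{(1)}-\theta_0^{(j)}\big]p(\tilde\tau_{aj})$), your use of $\det G$, and your treatment of Parts~2 and~3 via~\eqref{eq_Y1} go through as planned.
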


\begin{Remark}
In~the case of~$N=2$, the determinantal identities above become bilinear equations
satisfied by the tau functions. On the other hand, in the previous work~\cite{Jimbo-Nagoya-Sakai},
we gave conjectural bilinear equations of the tau functions for $N=2$ and $m=1$.
They are not equal to, and it is not clear whether the bilinear equations obtained in this paper
recover the conjectural bilinear equations (equations~(3.16)--(3.23) in~\cite{Jimbo-Nagoya-Sakai}).\vspace{-1ex}
\end{Remark}

\begin{Remark}
When $N=2$ and $m>1$, the compatibility condition~\eqref{eq_com_pati_Lax}
of the Lax pair yields the $q$-Garnier
system. For its $q\to 1$ limit, namely, the Garnier system, bilinear equations
of the tau functions were presented in~\cite{Suzuki-Garnier}. It is easily seen from
the transformation of the parameters by $r_i$ that the bilinear equations in Theorem~\ref{thm det tau r}(2),~(3) are $q$-analogs of (part of) the Hirota--Miwa equations (equations~(3.47) in~\cite{Suzuki-Garnier}).\vspace{-1ex}
\end{Remark}

\subsection*{Acknowledgements}
This work is partially supported by JSPS KAKENHI Grant Number JP18K03326.
The author thanks the referees for valuable suggestions and comments.


\pdfbookmark[1]{References}{ref}
\LastPageEnding

\end{document}